\newtheorem{thm}{Theorem}[section]
\newtheorem{lem}{Lemma}[section]
\newtheorem{cor}{Corollary}[section]
\newtheorem{prop}{Proposition}[section]
\newcommand\A{\mathbb A}
\newcommand\F{\mathbb F}
\newcommand\NN{\mathbb N}
\newcommand\PP{\mathbb P}
\newcommand\R{\mathbb R}
\newcommand\Q{\mathbb Q}
\newcommand\Z{\mathbb Z}
\newcommand\cC{\mathcal C}
\newcommand\cO{\mathcal O}
\newcommand\cV{\mathcal V}
\newcommand\fm{\mathfrak m}
\newcommand\bd{\mathbf d}
\newcommand\Aut{\operatorname{Aut}}
\newcommand\Hom{\operatorname{Hom}}
\newcommand\ev{\operatorname{ev}}
\newcommand\trop{\mathrm{trop}}
\newcommand\virt{\mathrm{virt}}
\newcommand\pt{\mathrm{pt}}
\title[On an Example of Quiver DT/Relative GW correspondence]{On an Example of Quiver Donaldson-Thomas/Relative Gromov-Witten correspondence}
\author{Pierrick Bousseau}
\date{}
\begin{document}

\begin{abstract}
We explain and generalize a recent result of Reineke-Weist by showing how to reduce it to the 
Gromov-Witten/Kronecker
correspondence by a degeneration and blow-up. We also refine the result by working with all genera on the Gromov-Witten side and with refined Donaldson-Thomas invariants on the quiver side.
\end{abstract}

\maketitle

\setcounter{tocdepth}{1}
\renewcommand\contentsname{\vspace{-0mm}}

\tableofcontents

\thispagestyle{empty}

\section{Introduction}

\subsection{Statement of the result}

Let $Y$ be a smooth projective complex surface and let $D_1$ and $D_2$ be two smooth non-empty divisors on $Y$, 
intersecting transversally, and such that the 
union $D_1 \cup D_2$ is anticanonical. 
Elementary  theory of surfaces implies that $D_1$ and $D_2$
are necessarily rational curves and that the intersection $D_1 \cap D_2$ consists of two points. An example to keep in mind is 
$Y=\PP^2$, $D_1$ a line and $D_2$ a smooth conic not tangent to $D_1$.

For every $\beta \in H_2(Y,\Z)$ such that
$\beta \cdot D_1>0$ and
$\beta \cdot D_2>0$, let 
$N_{0,\beta}^{Y/D_1}$ be the Gromov-Witten count of rational curves in $Y$ of class $\beta$, intersecting $D_1$ 
in a unique point with maximal tangency and passing through $\beta \cdot D_2$ prescribed points in general position in $Y$.
In the present paper, we explain how to construct a quiver $Q^{Y/D_1}_\beta$ and a dimension vector 
\[\mathbf{d}(\beta) \in \Z^{(Q^{Y/D_1}_\beta)_0}\,,\] 
where $(Q^{Y/D_1}_\beta)_0$
is the set of vertices of
$Q^{Y/D_1}_\beta$.
Donaldson-Thomas theory for quivers defines an integer $\Omega_{\beta}^{Y/D_1}$, 
virtual count of representations of $Q^{Y/D_1}_\beta$
of dimension vector $\mathbf{d}(\beta)$.
A priori, $\Omega_\beta^{Y/D_1}$ is only defined if
$\mathbf{d}(\beta) \in \NN^{(Q^{Y/D_1}_\beta)_0}$.
If $\mathbf{d}(\beta) \notin \NN^{(Q^{Y/D_1}_\beta)_0}$,
we set $\Omega_\beta^{Y/D_1} \coloneq 0$.

Our main result is the following theorem.

\begin{thm} \label{main_thm_0}
If the quiver $Q^{Y/D_1}_\beta$ is acyclic,
that is, does not contain 
any oriented cycle,
then we have 
\[  \Omega_{\beta}^{Y/D_1}
=(-1)^{\beta \cdot D_1+1}  N_{0,\beta}^{Y/D_1}\,.\]
\end{thm}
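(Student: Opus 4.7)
The plan is to reduce Theorem \ref{main_thm_0} to the Gromov-Witten/Kronecker correspondence of Gross--Pandharipande--Reineke by means of a degeneration along $D_2$ followed by iterated blow-ups, and then to match the combinatorics of the resulting local GW contributions with the structure of the acyclic quiver $Q^{Y/D_1}_\beta$.

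First, I would apply the relative degeneration formula to the pair $(Y, D_1)$, degenerating $Y$ along the smooth divisor $D_2$ into $Y \cup_{D_2} \PP_{D_2}(\cO \oplus N_{D_2/Y})$. Because $D_1 \cup D_2$ is anticanonical and $D_1 \cap D_2$ consists of two points, the degeneration respects $D_1$ and produces a sum over splittings of the class $\beta$, of the $\beta\cdot D_2$ point insertions, and of matching tangency profiles along $D_2$. Each term in the sum is a product of a relative GW invariant on $(Y, D_1 \cup D_2)$ (with controlled tangencies along $D_2$) and a relative GW invariant on the $\PP^1$-bundle over $D_2$.

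Second, I would perform successive blow-ups on the $\PP^1$-bundle component (and on $Y$ if needed), centered at the base points of the point-insertions along $D_2$, to resolve the geometry into a toric-like surface to which the GW/Kronecker correspondence applies directly. After these blow-ups, the relative GW counts on each component become exactly the counts of rational curves with prescribed tangencies along two divisors computed by Reineke in terms of DT invariants of a bipartite (Kronecker-type) quiver. The quiver $Q^{Y/D_1}_\beta$ is then built by glueing these local Kronecker pieces according to the combinatorics of the degeneration/blow-up, with vertices recording local curve classes and arrows recording the matching tangencies at $D_2$.

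Third, assuming $Q^{Y/D_1}_\beta$ is acyclic, its DT invariant $\Omega_\beta^{Y/D_1}$ admits a product/no-pole decomposition compatible with the degeneration sum on the GW side: acyclicity forbids the scattering-type wall-crossing corrections that would otherwise appear from oriented cycles in a more general quiver, so the localization of DT to Kronecker subquivers is exact. Matching term by term with the degeneration formula yields the equality, with the sign $(-1)^{\beta \cdot D_1 + 1}$ coming from the standard BPS/numerical-DT normalization together with the multiple-cover/tangency factor of maximal contact at $D_1$.

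The main obstacle will be Step~2 and the glueing in Step~3: one must check that the combinatorial procedure assembling the quiver $Q^{Y/D_1}_\beta$ from the degeneration/blow-up data really produces an acyclic quiver in precisely the cases handled here, and that the acyclicity condition is what rules out contributions from classes $\mathbf{d}'<\mathbf{d}(\beta)$ whose DT invariants would otherwise enter via the flow-tree/wall-crossing formula on the quiver side. The sign and multiplicity bookkeeping across the degeneration, particularly the identification of the maximal-tangency automorphism factors with the Euler form of the assembled quiver, will require careful tracking.
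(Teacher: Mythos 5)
Your overall strategy is the same as the paper's: degenerate $Y$ to the normal cone of $D_2$, trade the $\beta\cdot D_2$ point insertions for conditions on $D_2$, blow those points up to land on a log Calabi--Yau surface $(Z, D_1\cup D_2)$, and invoke the GW/Kronecker correspondence. That is exactly the key geometric idea. However, there are two genuine gaps in your execution.

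First, you apply the relative degeneration formula as if $D_2$ were the only divisor in play and assert that the degeneration ``respects $D_1$.'' This is precisely the point that does not come for free: the curves carry a maximal tangency condition along $D_1$, and $D_1$ meets $D_2$ in two points, so the relevant divisor $D_1\cup D_2$ is normal crossing and the classical degeneration formula of Li does not apply. One must work with log stable maps and the Abramovich--Chen--Gross--Siebert decomposition formula, and the bulk of the work is showing that the a priori possible contributions from rigid tropical curves interacting with the corners $D_1\cap D_2$ (i.e.\ with the two-dimensional faces of the tropicalization of the special fiber) vanish. This cannot be done by balancing/combinatorics alone, since the components of the special fiber are not toric; the paper needs homological and intersection-theoretic arguments on stable log maps (no component maps onto $D_2$, the total intersection with $D_2$ is exhausted by the fibers through the degenerated points, hence no component meets $p_1$ or $p_2$), combined with the genus-$0$ dual graph reduction coming from the vanishing of $\lambda_g$ on cycles. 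Your proposal does not address this, and without it the ``sum over splittings'' you write down is not justified.

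Second, your description of the quiver is not the one the theorem is about. $Q^{Y/D_1}_\beta$ is not assembled by gluing local Kronecker pieces indexed by the degeneration combinatorics, with arrows recording tangency matchings at $D_2$. It is constructed from a toric model $(\tilde Z,\tilde D)\to(\bar Z,\bar D)$ of the blown-up surface $(Z,D_1\cup D_2)$: vertices are the interior blow-up points $x_j$, arrows from $j$ to $k$ number $\max(\langle m_j,m_k\rangle,0)$ for the primitive ray generators $m_j$, and the dimension vector is given by intersection with the exceptional divisors. The correspondence with DT invariants is then a single global application of the (quantum) GW/Kronecker correspondence to $(Z,D)$ in the class $\beta_Z$, not a term-by-term matching against a degeneration sum; acyclicity is used so that the refined DT invariants are defined via intersection cohomology of quiver moduli and the correspondence applies, while the sign $(-1)^{\beta\cdot D_1+1}$ and the absence of multiple-cover corrections come from the primitivity of $\bd(\beta)$ (it takes the value $1$ on the vertices corresponding to the $\beta\cdot D_2$ exceptional curves), not from a wall-crossing or flow-tree cancellation.
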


In the previous result, $D_1$ and $D_2$ do not play symmetric roles. 
Indeed, we are counting curves with a tangency condition along $D_1$ whereas
$D_2$ only appears in an indirect way in the number $\beta \cdot D_2$ of point conditions. 
If we exchange the roles of $D_1$ and $D_2$, we get a new example.

\textbf{Example:} Let $Y=\PP^2$, $D_1$ a line and 
$D_2$ a smooth conic not tangent to $D_1$. Let 
$d$ be a positive integer. Taking 
$\beta=d \in \Z=H_2(\PP^2,\Z)$, the quiver 
$Q^{Y/D_1}_\beta$ consists of 
$2d+1$ vertices, 
$i_1, \dots, i_{2d}$ and $j$, and $2d$ arrows 
$\alpha_k \colon i_k \rightarrow j$ for all 
$k=1,\dots, 2d$. The dimension vector 
$\mathbf{d}(\beta)$ is $1$ on the vertices 
$i_1,\dots,i_{2d}$, and $d$ on the vertex $j$.
\begin{center}
\begin{tikzpicture}[>=angle 90]
\matrix(a)[matrix of math nodes,
row sep=3em, column sep=3em,
text height=1.5ex, text depth=0.25ex]
{1& &\\
1& &d\\
1& &\\};
\path[->](a-1-1) edge node[above]{$\alpha_1$} (a-2-3);
\path[->](a-2-1) edge node[above]{$\alpha_2$} (a-2-3);
\path[->](a-3-1) edge  node[below]{$\alpha_{2d}$} (a-2-3);
\end{tikzpicture}
\end{center}
This quiver is acyclic and so Theorem \ref{main_thm_0} applies. 
In this case, Theorem \ref{main_thm_0} reduces to the main result obtained by
\cite{reineke2018moduli}. More generally, 
Theorem \ref{main_thm_0} gives a way to produce a large number of new examples, 
see Section \ref{section_examples}.

We will in fact prove a more general version of Theorem \ref{main_thm_0}, 
involving higher genus Gromov-Witten invariants and 
refined Donaldson-Thomas invariants of the quiver
$Q_{\beta}^{Y/D_1}$. The moduli space of genus $g$ stable maps in $Y$ of class $\beta$, intersecting $D_1$ in a unique point with maximal tangency and passing through 
$\beta \cdot D_2$ prescribed points, has virtual dimension $g$. We define Gromov-Witten invariants $N_{g,\beta}^{Y/D_1}$ by integrating $(-1)^g \lambda_g$ over the moduli space, where $\lambda_g$ is the top Chern class of the Hodge bundle. On the other hand, refined Donaldson-Thomas theory for quivers defines a Laurent polynomial $\Omega_\beta^{Y/D_1}(q^{\frac{1}{2}}) \in \Z[q^{\pm \frac{1}{2}}]$, refined virtual count of representations of $Q_\beta^{Y/D_1}$ of dimension vector $\mathbf{d}(\beta)$.

\begin{thm}\label{main_thm_precise_intro}
If the quiver $Q_{\beta}^{Y/D_1}$ is acyclic, 
then we have an equality of formal power series in $\hbar$:
\[ \Omega_\beta^{Y/D_1}(q^{\frac{1}{2}})
=(-1)^{\beta \cdot D_1+1} 
\left( 2 \sin \left( \frac{\hbar}{2}
\right) \right)
\left( 
\frac{\hbar}{2 \sin \left( \frac{\hbar}{2} \right)}
\right)^{\beta \cdot D_2}
\left( \sum_{g\geqslant 0} N_{g,\beta}^{Y/D_1} 
\hbar^{2g-1} \right) \,,\]
where $q=e^{i \hbar}=\sum_{n \geqslant 0} \frac{(i\hbar)^n}{n!}$.
\end{thm}

We recover Theorem \ref{main_thm_0} from
Theorem \ref{main_thm_precise_intro} in the limit $\hbar \rightarrow 0$, 
$q^{\frac{1}{2}} \rightarrow 1$.
Theorem \ref{main_thm_precise_intro} is new even in the simplest case of 
$Y=\PP^2$, $D_1$ a line and $D_2$ a smooth conic not tangent to $D_1$.

\subsection{Origin of the result}
The present paper comes from a search for a
``natural explanation" of the result of \cite{reineke2018moduli}. 
The source of our explanation is the correspondence existing between log Gromov-Witten 
invariants of log Calabi-Yau surfaces and quiver Donaldson-Thomas invariants. 
This known correspondence comes from the combination of the work of
\cite{MR2650811} on the quiver side and of the work of
\cite{MR2667135} on the Gromov-Witten side. Examples of this correspondence are discussed in 
\cite{MR2662867}
and in 
\cite{MR3004575}
under the name of the Gromov-Witten/Kronecker correspondence. 
The most general version of this correspondence, that we will use in this paper, can be found in Section 8.5 of
\cite{bousseau2018quantum_tropical}. 
In fact, in Section 8.5 of \cite{bousseau2018quantum_tropical}, a generalization of the original genus 0/DT correspondence is given, involving higher genus Gromov-Witten invariants with lamba class insertion and refined DT invariants.

Reineke and Weist observed the close similarity between their result and the Gromov-Witten/Kronecker correspondence 
but remarked that a direct connection does not seem obvious. The core of the present paper is to explain such connection.

The input of the general form of the Gromov-Witten/Kronecker correspondence described in Section 8.5 of \cite{bousseau2018quantum_tropical} 
is a log Calabi-Yau surface, that is, the pair of a smooth projective surface $Z$ and of an anticanonical divisor $D$ in $Z$. 
We then consider Gromov-Witten counts of rational curves in $Z$ in a fixed curve class $\beta_Z$,
intersecting $D$ in only one point with maximal tangency.
This seems different from our $N_{0,\beta}^{Y/D_1}$, counting rational curves in $Y$ of class $\beta$, 
intersecting $D_1$ in a unique point and passing through $\beta \cdot D_2$ points in general position in $Y$.
The key geometric idea of the present paper is the following: in order to go from $Y$ to some $Z$, 
we move the $\beta \cdot D_2$ points onto $D_2$ and we blow them up.

So, let $Z$ be the surface obtained from 
$Y$ by blowing-up $\beta \cdot D_2$ points on $D_2$, distinct from each other and distinct from 
$D_1 \cap D_2$. Denote 
$\pi_Y \colon Z \rightarrow Y$ the blow-up morphism, 
$F_1, \dots, F_{\beta \cdot D_2}$ the exceptional divisors and $\beta_Z
\coloneqq \pi^{*}_Y \beta -\sum_{j=1}^{\beta \cdot D_2}
[F_j] \in H_2(Z,\Z)$. 
We still denote $D_1$ and $D_2$ the strict transforms of $D_1$ and $D_2$ in $Z$.
Let $N_{0,\beta_Z}^{Z/D_1}$ be the log 
Gromov-Witten count of rational 
curves in $Z$ of class $\beta_Z$, intersecting $D_1$ in a unique point with 
maximal tangency and not intersecting $D_2$.

The intuitive picture of moving the $\beta \cdot D_2$ points from a general configuration in $Y$ 
to a special configuration on $D_2$ suggests the following result.

\begin{thm} \label{thm_gw_0}
We have $N_{0,\beta}^{Y/D_1}
=N_{0,\beta_Z}^{Z/D_1}$.
\end{thm}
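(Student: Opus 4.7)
The plan is to prove Theorem~\ref{thm_gw_0} in two steps, following the geometric intuition stated just before it. First, use deformation invariance to move the $n \coloneqq \beta \cdot D_2$ point insertions from a general configuration in $Y$ onto the divisor $D_2$. Then match the resulting count with $N_{\beta_Z}^{Z/D_1}$ via strict transforms under the blow-up $\pi_Y \colon Z \to Y$.

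For the first step, $N_\beta^{Y/D_1}$ is defined by integrating $n$ point-class insertions against the virtual class of the moduli of genus~$0$ stable log maps to $(Y,D_1)$ of class $\beta$ with maximal tangency along $D_1$. Since $[\pt]\in H^4(Y,\Z)$ does not depend on the chosen representative and since the intersection remains of expected codimension along any continuous path of distinct points avoiding $D_1 \cap D_2$, the count $N_\beta^{Y/D_1}$ is unchanged as the $n$ points are moved to a configuration $p_1,\ldots,p_n$ of distinct points on $D_2 \setminus (D_1\cap D_2)$. Fix now such a configuration. Any rational curve $C\subset Y$ of class $\beta$ passing through all $p_j$ must meet $D_2$ exactly at these $n$ points transversely, because $\beta \cdot D_2 = n$; in particular no component of $C$ lies inside $D_2$. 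Its strict transform is a rational curve $\widetilde C \subset Z$ of class $\pi_Y^{*}\beta - \sum_{j=1}^n [F_j] = \beta_Z$, still with maximal tangency along $D_1$ and now disjoint from the strict transform $\widetilde D_2$, consistently with $\beta_Z \cdot \widetilde D_2 = 0$. Conversely, any rational curve in $Z$ of class $\beta_Z$ pushes forward to a rational curve in $Y$ of class $\beta$ meeting $D_2$ precisely at the $p_j$, since $\beta_Z \cdot F_j = 1$ for each $j$.

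The main obstacle is to upgrade this bijection between the underlying enumerative problems into an equality of virtual fundamental classes, in order to identify the two Gromov--Witten integrals rather than just their generic loci. A clean route is to realise both $N_\beta^{Y/D_1}$ and $N_{\beta_Z}^{Z/D_1}$ as specialisations of a single log Gromov--Witten invariant on the degeneration of $Y$ to the normal cone of $D_2$: in the central fibre, $Z$ appears together with a chain of projective-bundle components into which the point insertions flow, and the degeneration formula of log Gromov--Witten theory expresses both invariants in terms of the same vertex contributions. The matching is streamlined by the fact that the blow-up centres lie in the smooth interior of $D_2$ away from $D_1\cap D_2$, and by the saturation of intersection numbers with $D_2$ at the $p_j$, which rules out stray components supported in $D_2$ or in the exceptional divisors.
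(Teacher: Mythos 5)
Your overall strategy is the same as the paper's: specialise the point insertions onto $D_2$ via the degeneration of $Y$ to the normal cone of $D_2$, then trade the resulting contact conditions for the blow-up $Z$. But the proposal stops exactly where the paper's technical work begins. The first step is not justified as written: deformation invariance lets you represent $[\pt]^{\otimes n}$ by any points, but the assertion that ``the intersection remains of expected codimension along any continuous path'' is false in general once the points reach $D_2$ (components of stable maps can fall into $D_2$, and the tangency condition along $D_1$ interacts with the degeneration at the two points of $D_1\cap D_2$); the strict-transform bijection you describe only controls the idealised enumerative locus, not the virtual class, as you acknowledge. The paragraph meant to close this gap then merely asserts that the degeneration formula ``expresses both invariants in terms of the same vertex contributions'' and that stray components are ``ruled out'' --- but in the log setting relative to the normal crossing divisor $D_1\cup D_2$ this is precisely the statement that has to be proved. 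The decomposition formula of Abramovich--Chen--Gross--Siebert is indexed by rigid decorated tropical curves in the tropicalization of the special fibre, which here is not toric, so combinatorics alone does not exclude contributions from curves with components sitting over $D_1\cap D_2$ (the two-dimensional faces $f_{p_1},f_{p_2}$), components mapping into $D_2$, or components moving horizontally in the $\PP^1$-bundle. The paper eliminates these through the chain of Lemmas \ref{lem_descent}--\ref{lem_no_f} (and their analogues in Section \ref{section_exchange_blow_ups}), combining the vertical balancing condition, the genus-$0$ restriction enforced by the vanishing of $\lambda_g$ on families with cycles of components, and intersection-number bookkeeping with $D_2$ and the fibre classes. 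None of this appears in your proposal.

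Two further points of substance. A single degeneration does not directly compare $N_\beta^{Y/D_1}$ with $N_{\beta_Z}^{Z/D_1}$: the paper passes through the intermediate invariant $N_{g,\beta}^{Y/D}$ of $Y$ relative to the full divisor $D=D_1\cup D_2$ with contact order $1$ at $\beta\cdot D_2$ points of $D_2$, and uses two separate degenerations (Propositions \ref{prop_deg_point} and \ref{prop_exchange_blow_ups}). Moreover each degeneration produces a nontrivial vertex factor $\left(2\sin\left(\tfrac{\hbar}{2}\right)\right)^{-\beta\cdot D_2}$ from the relative Gromov--Witten theory of the $\PP^1$-fibres; these must be seen to match on both sides (which is what makes the genus-$0$ statement come out clean), and your proposal does not account for them.
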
 

In absolute Gromov-Witten theory, trading exceptional divisors for point insertions is quite familiar, see \cite{MR1832328} for some early example. The main novelty in Theorem \ref{thm_gw_0} is that we are dealing with relative Gromov-Witten theory.

Theorem \ref{main_thm_0} follows
from Theorem \ref{thm_gw_0} in a natural fashion. Indeed,
the invariants $N_{0,\beta_Z}^{Z/D_1}$ are exactly those entering in the general form of the Gromov-Witten/Kronecker 
correspondence and a quiver can be obtained from a toric model of the log Calabi-Yau surface $(Z, D_1 \cup D_2)$. 
We review the construction of such a quiver 
in Section
\ref{quiver_construction}. In the particular case of $Y=\PP^2$, 
$D_1$ a line and $D_2$ a smooth conic not tangent to 
$D_1$, we recover the quiver considered by Reineke and Weist. A feature of our approach is that we do not have to guess the quiver, which
contrasts with the situation in \cite{reineke2018moduli}.
In particular, we can produce many new examples, some of them being presented in Section
\ref{section_examples}.

\subsection{Technical content}
The technical content of this paper is the proof of Theorem \ref{thm_gw_0} and consists in two degeneration arguments in Gromov-Witten theory.
The 1st step involves degenerating the 
$\beta \cdot D_2$ points on $D_2$. The 2nd step involves blowing these points up. 
Because we will be really considering curves with tangency conditions along $D_1 \cup D_2$, which is a singular normal crossing divisor, 
we will be working with log Gromov-Witten invariants.
In this context, our degeneration arguments will use the decomposition formula of
\cite{abramovich2017decomposition}
in a way similar to how we used it
in \cite{bousseau2017tropical} and
\cite{bousseau2018quantum_tropical}, and then an explicit treatment of the gluing.
In particular, our setting is explicit enough so that a general theory of gluing for log Gromov-Witten invariants, such as recently developed in 
\cite{ranganathan2019logarithmic} or
\cite{abramovich2019punctured}, is not necessary. 

Let us stress the most technical point, which is not present in \cite{bousseau2017tropical} or
\cite{bousseau2018quantum_tropical}. 
In all cases, the general decomposition formula requires to identify rigid tropical curves, indexing the terms contributing in the formula.
In \cite{bousseau2017tropical} and
\cite{bousseau2018quantum_tropical}, the components of the special fibers of the degenerations are toric, or close enough of toric, so that the tropical balancing condition gives enough constraints to identify exactly the relevant contributing rigid tropical curves. In the present paper, the degenerations considered are far enough from being toric and purely combinatorial arguments are not enough to rule out rigid tropical curves which eventually will contribute zero in the degeneration formula. To eliminate those, we have to argue at the level of stable log maps. The issue is that rigid tropical curves are not necessarily realizable as tropicalization of stable log maps (see Example 6.4 of \cite{abramovich2017decomposition} for an example). 
The solution to this problem is to study the possible tropicalizations of all the
relevant stable log maps, even the ones giving non-rigid tropical curves, and then 
to identify the contributing rigid tropical curves as possible rigid limits of them.

\subsection{Relation with other works}
We mention briefly two works that are logically independent of the present paper but 
which consider some of the same objects from a different point of view.

If $D_1$ is nef, it follows from the main result of \cite{van2017local}
that the relative genus zero Gromov-Witten invariants $N_{0,\beta}^{Y/D_1}$
are related to the absolute genus zero Gromov-Witten invariants
    with $\beta \cdot D_2$ point insertions of the non-compact three-fold total space of the line bundle $\cO_Y(-D_1)$. Thus, if $D_1$ is nef, we can view Theorem \ref{main_thm_0} as establishing a relation between absolute genus $0$ Gromov-Witten theory of the three-fold $\cO_Y(-D_1)$ and Donaldson-Thomas invariants of the quiver $Q_\beta^{Y/D_1}$.

For $Y=\PP^2$, $D_1$ a line and $D_2$ a smooth conic not tangent to $D_1$,
     \cite{reineke2018moduli} use the quiver interpretation of $N_\beta^{Y/D_1}$ to derive a recursion relation satisfied by the invariants $N_\beta^{Y/D_1}$. In work 
     \cite{fan2018wdvv}, this recursion is derived as a very particular case of a general WDVV formalism for genus zero Gromov-Witten invariants relative to a smooth divisor. It would be interesting to study, either from the quiver or Gromov-Witten sides, if similar recursion relations exist for the examples presented in Section \ref{section_examples}.

\subsection{Notations} In order to keep the amount of notations as low as possible,
we systematically use the convention to denote in the same way a divisor and its strict transform by some blow-up morphism, hoping that the context will be clear enough to
specify the relevant geometry. 
We denote $A_*$ and $A^*$ for respectively Chow groups and operatorial 
cohomology Chow groups, as in \cite{MR1644323}.

\subsection{Plan of the paper}
In Section
\ref{section_precise_statements}, we state Theorem \ref{main_thm_precise}, cited as Theorem \ref{main_thm_precise_intro}
in the Introduction. 
The following two sections are dedicated to the 
proof of Theorem \ref{main_thm_precise}. 
In Section
\ref{section_deg_point}, 
we apply a degeneration argument to show that we can 
transform the $\beta \cdot D_2$ absolute point conditions entering the definition of 
$N_\beta^{Y/D_1}$ into $\beta \cdot D_2$ relative point conditions along the divisor $D_2$.
In Section \ref{section_exchange_blow_ups}, another degeneration argument shows that these $\beta \cdot D_2$ relative point conditions along 
$D_2$ can be transformed into no condition on the surface $Z$ obtained from 
$Y$ by blowing-up $\beta \cdot D_2$ points on $D_2$.
In Section \ref{section_end_proof}, we  finish the proof of Theorem \ref{main_thm_precise}. 
In Section \ref{section_examples}, we present various examples.

\subsection{Note (v2).} The first arxiv version und unfortunately the published version of this paper contain incorrect normalization factors in the statement of Theorems 1.2, 2.1 and of Proposition 3.1, the mistake in the proof being in the very last part of Section 3. The statements and the proof are corrected in the present version. I thank Longting Wu for asking me a question leading to the realization of the need of the correction. 

\subsection{Acknowledgements.}
I made the key observation allowing this paper to exist (the fact that for $Y=\PP^2$, $D_1$ a line and $D_2$ a conic, the quiver associated to the surface $Z$ by the recipe of \cite{bousseau2018quantum_tropical} coincides with the quiver of 
\cite{reineke2018moduli}) in a TGV Lyria train between Paris and Zurich. I thank Honglu Fan, Rahul Pandharipande, Richard Thomas and Longting Wu for  useful discussions. 

I acknowledge the support of Dr.\ Max R\"ossler, the Walter Haefner Foundation and the ETH Z\"urich
Foundation.

\section{Main result}\label{section_precise_statements}

\subsection{The Gromov-Witten side} \label{gw_side}

Let $Y$ be a smooth projective complex surface and let $D_1$ and $D_2$ be two smooth non-empty divisors on $Y$, intersecting transversally, and such that the 
union $D_1 \cup D_2$ is anticanonical. Elementary  theory of surfaces implies that $D_1$ and $D_2$
are necessarily rational curves and that the intersection $D_1 \cap D_2$ consists of two points. 

We fix $\beta \in H_2(Y,\Z)$ such that 
$\beta \cdot D_1>0$ and 
$\beta \cdot D_2>0$.
Let 
$\overline{M}_{g, \beta \cdot D_2}(Y/D_1,\beta)$
be the moduli space of $(\beta \cdot D_2)$-pointed genus $g$ class $\beta$ 
stable maps to $Y$ relative to $D_1$, with contact condition along $D_1$ at a single point and so with contact order $\beta \cdot D_1$.
When we say $(\beta \cdot D_2)$-pointed, we refer  to marked points
in addition to the marked point keeping track of the tangency condition along $D_1$.
The moduli space
$\overline{M}_{g, \beta \cdot D_2}(Y/D_1,\beta)$
is a proper 
Deligne-Mumford stack of virtual dimension 
\[(1-g)(\dim Y-3)+\beta \cdot c_1(Y)+\beta \cdot D_2-(\beta \cdot D_1-1)=g+2(\beta \cdot D_2)\,,\]
where we used that $D_1 \cup D_2$ is anticanonical and so 
$\beta \cdot c_1(Y)=\beta \cdot D_1+\beta \cdot D_2$.
It admits a virtual fundamental class 
\[[\overline{M}_{g, \beta \cdot D_2}(Y/D_1,\beta)]^\virt \in A_{g+2(\beta \cdot  D_2)}(
\overline{M}_{g, \beta \cdot D_2}(Y/D_1,\beta),\Q) \,.\]
Let $\ev_k \colon \overline{M}_{g, \beta \cdot D_2}(Y/D_1,\beta)
\rightarrow Y$, for $1 \leqslant k \leqslant \beta \cdot D_2$, be the evaluation maps at the 
$\beta \cdot D_2$ marked points. 
We define 
\[N_{g,\beta}^{Y/D_1} \coloneqq 
\int_{[\overline{M}_{g, \beta \cdot D_2}(Y/D_1,\beta)]^\virt}
(-1)^g \lambda_g \prod_{k=1}^{\beta \cdot D_2} \ev_k^{*}(pt) \in \Q\,,\]
where $\pt \in A^1(Y)$ is the class of a point and $\lambda_g$ is the top Chern class of the Hodge bundle over $\overline{M}_{g, \beta \cdot D_2}(Y/D_1,\beta)$.
Recall that if 
\[\nu \colon \cC \rightarrow \overline{M}_{g, \beta \cdot D_2}(Y/D_1,\beta)\]
denotes the universal source curve, then the Hodge bundle is the rank $g$ vector bundle 
$\nu_{*} \omega_{\nu}$, where $\omega_\nu$ is the dualizing line bundle relative to 
$\nu$.

\subsection{Basics on log Calabi-Yau surfaces} \label{basics_log_cy}
To explain the construction of the quiver 
$Q_\beta^{Y/D_1}$ given in the next Section
\ref{quiver_construction}, it is useful to recall some terminology related to 
log Calabi-Yau surfaces. We refer to \cite{friedman2015geometry},
\cite{MR3314827}, \cite{MR3415066} for more details.
A log Calabi-Yau surface with maximal boundary is a pair 
$(Y,D)$ with $Y$ a smooth projective surface and $D$ a singular reduced 
normal crossing anticanonical divisor of $Y$.
There are two basic operations on log Calabi-Yau surfaces with maximal boundary:
\begin{itemize}
    \item Corner blow-up. If $(Y,D)$ is a log Calabi-Yau surface with 
    maximal boundary, then $(\tilde{Y}, \tilde{D})$ is a log Calabi-Yau surface with maximal boundary, 
    where $\tilde{Y}$ is a blow-up of $Y$ at one singular point of $D$ and 
    $\tilde{D}$ is the preimage of $D$ in $\tilde{Y}$.
    \item Interior blow-up. If $(Y,D)$ is a log Calabi-Yau surface with maximal boundary, 
    then $(\tilde{Y},\tilde{D})$ is a log Calabi-Yau surface with 
    maximal boundary, where $\tilde{Y}$ is a blow-up of $Y$ at some smooth 
    points of $D$ (with infinitely near points allowed) and $\tilde{D}$ is the strict transform of $D$ in $\tilde{Y}$.
\end{itemize}
The simplest example of log Calabi-Yau surface with maximal boundary is 
$(\bar{Y},\bar{D})$, with $\bar{Y}$ a smooth projective toric surface and 
$\bar{D}$ the union of toric divisors of $\bar{Y}$. Such log Calabi-Yau surface with maximal boundary is called toric.

A toric model of a log Calabi-Yau surface with maximal boundary $(Y,D)$ 
is a morphism $\pi \colon (Y,D) \rightarrow (\bar{Y},\bar{D})$, where $(\bar{Y},\bar{D})$
is a toric log Calabi-Yau surface with maximal boundary, and where $\pi$ is obtained from $(\bar{Y},\bar{D})$ by a sequence of interior blow-ups.

We recall Proposition 1.3 of \cite{MR3415066}:

\begin{prop} \label{prop_toric_model}
For every $(Y,D)$ log Calabi-Yau surface with maximal boundary, there exists $(\tilde{Y},\tilde{D})$, obtained from 
$(Y,D)$ by a sequence of corner blow-ups, and admitting a toric model
$(\tilde{Y},\tilde{D}) \rightarrow (\bar{Y},\bar{D})$
\end{prop}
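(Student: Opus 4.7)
The plan is to proceed by induction on the Picard rank $\rho(Y)$, combining Mori theory for log surfaces with the two available types of blow-ups. First I record two structural facts. By adjunction applied to each component $D_i$ of $D$, together with the maximal-boundary hypothesis (so each $D_i$ meets the remaining components at two or more points), every $D_i$ is a smooth rational curve. Since $-K_Y = D$, any $(-1)$-curve $E$ on $Y$ satisfies $E \cdot D = 1$, so three possibilities arise: $E$ is a component of $D$, or $E \not\subset D$ meets $D$ transversally at a single smooth point of $D$ (an \emph{interior $(-1)$-curve}), or $E \not\subset D$ passes through a node of $D$.

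The inductive step. If $(Y,D)$ contains an interior $(-1)$-curve $E$, I contract it to obtain $(Y',D')$, which is again log Calabi-Yau with maximal boundary and has $\rho(Y') = \rho(Y)-1$. The nodes of $D'$ correspond canonically to those of $D$, since $E$ avoids them, so any corner blow-up of $(Y',D')$ pulls back to a corner blow-up of $(Y,D)$. By the induction hypothesis there exists $(\tilde{Y}',\tilde{D}') \to (Y',D')$ admitting a toric model $(\tilde{Y}',\tilde{D}') \to (\bar{Y},\bar{D})$; the corresponding pullback $(\tilde{Y},\tilde{D})$ of $(Y,D)$ sits over $(\tilde{Y}',\tilde{D}')$ via a single interior blow-down, and composing with the toric model of $\tilde{Y}'$ produces the desired toric model of $\tilde{Y}$ (all the intermediate interior blow-ups commute with the corner blow-ups by disjointness of supports).

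If no interior $(-1)$-curve exists but some $(-1)$-curve $E \not\subset D$ passes through a node $p$ of $D$, I first perform a corner blow-up at $p$; the strict transform of $E$ then meets the new exceptional boundary component transversally at a single smooth point and is an interior $(-1)$-curve in the blown-up pair, reducing to the previous case. The main obstacle is the base case, in which $Y$ admits no $(-1)$-curve outside $D$ at all, so that $(Y,D)$ is relatively minimal in the log sense. Here I would run a Mori-theoretic case analysis using $K_Y + D \sim 0$: every $K_Y$-negative extremal contraction is either a Mori fiber space or contracts a component of $D$, which pins down $Y$ as $\PP^2$, a Hirzebruch surface $\F_n$, or a simple variant. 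In each case one checks directly either that $(Y,D)$ is already toric, or that a short explicit sequence of corner blow-ups at the nodes of $D$ produces new interior $(-1)$-curves that reveal $(Y,D)$ as arising from a toric pair $(\bar Y, \bar D)$ by iterated interior blow-ups, completing the induction.
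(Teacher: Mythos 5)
The paper does not prove this statement at all: it is quoted as Proposition 1.3 of [GHK15b], so there is no in-paper argument to compare with. Your strategy (contract interior $(-1)$-curves to induct on the Picard rank, then classify the minimal pairs) is essentially the standard proof from the literature, and your inductive step is correct: an interior blow-down commutes with corner blow-ups because their centers are disjoint, and stacking one more interior blow-up on top of a toric model again yields a toric model. Two smaller points first. Your trichotomy for $(-1)$-curves is off: since $E\cdot D=1$, a $(-1)$-curve $E\not\subset D$ can never pass through a node of $D$ (it would meet both branches, giving $E\cdot D\geqslant 2$), so your third case is vacuous; and the fix you propose for it is itself wrong, since the strict transform of $E$ under a blow-up at a point of $E$ has self-intersection $-2$, not $-1$. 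This happens not to matter only because the case never occurs.

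The genuine gap is the base case, which is where all the content of the proposition lives, and which you defer entirely to ``one checks directly.'' Two things are missing there. First, your Mori-theoretic reduction contracts $(-1)$-curves that are components of $D$, i.e.\ it performs corner blow-\emph{downs}; these go in the opposite direction from the statement, so to transfer the conclusion from the corner-blown-down pair back to $(Y,D)$ you need an unstated lemma: if $(Y,D)$ is a corner blow-up of $(Y_0,D_0)$ and $(Y_0,D_0)$ satisfies the proposition, then so does $(Y,D)$. This is true (a toric model is an isomorphism near the nodes of the boundary, so corner blow-ups of the source correspond to toric corner blow-ups of the target, after passing to a common corner refinement), but it must be said. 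Second, once you reach $(\PP^2,D)$ or $(\F_n,D)$ with no interior $(-1)$-curves, the verification is not a formality: one must exhibit, for each $n$ and each combinatorial type of anticanonical cycle $D$ (e.g.\ $D=C_{-n}+D_2$ with $D_2\in|C_{-n}+(n+2)f|$), an explicit sequence of corner blow-ups after which enough interior $(-1)$-curves appear (typically strict transforms of fibers or tangent lines through the nodes) to contract down to a toric pair. This is exactly the computation the paper carries out in its examples (e.g.\ the passage from $(\PP^2,\text{line}+\text{conic})$ to $\F_2$ in Section 5.1), and as written your proposal asserts rather than proves that it always succeeds.
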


 Given $(Y,D)$, there are in general many (often infinitely many)
$(\tilde{Y},\tilde{D})$ and $(\bar{Y},\bar{D})$ as in Proposition
\ref{prop_toric_model}. The non-uniqueness of toric models is related 
to the theory of cluster mutations, see \cite{MR3350154}.

\subsection{Quiver construction} \label{quiver_construction}

Given $Y$, $D_1$, $D_2$ and $\beta$ as in Section 
\ref{gw_side}, we explain how to construct a quiver $Q_\beta^{Y/D_1}$, that is, a 
finite oriented graph, and a dimension vector 
$\bd(\beta) \in \Z^{(Q_\beta^{Y/D_1})_0}$, where 
$(Q_\beta^{Y/D_1})_0$ denote the set of vertices of $Q_\beta^{Y/D_1}$.
In fact, our construction of $Q_\beta^{Y/D_1}$ and $\bd(\beta)$ will depend on an extra choice, 
that for simplicity we do not include in the notation. 
Different choices will define mutation equivalent quivers.

Let $Z$ be the surface obtained from 
$Y$ by blowing-up $\beta \cdot D_2$ points on $D_2$, distinct from each other and distinct from 
$D_1 \cap D_2$. Denote 
\[\pi_Y \colon Z \rightarrow Y\] 
the blow-up morphism, $F_1, \dots, F_{\beta \cdot D_2}$ the exceptional divisors and \[\beta_Z
\coloneqq \pi^{*}_Y \beta -\sum_{j=1}^{\beta \cdot D_2}
[F_j] \in H_2(Z,\Z)\,.\] 
We still denote $D_1$ and $D_2$ the strict transforms of $D_1$ and $D_2$ in $Z$.
We denote $D \coloneqq D_1 \cup D_2$.

The pair $(Z,D)$ is a log Calabi-Yau surface with maximal boundary in the sense 
of Section \ref{basics_log_cy}. 
Indeed, it is already true for $(Y,D)$ by assumption and $(Z,D)$ is obtained from $(Y,D)$ by interior blow-ups.

According to Proposition \ref{prop_toric_model}, there exists a diagram of log Calabi-Yau surfaces with maximal boundary
\begin{center}
\begin{tikzcd}
& (\tilde{Z},\tilde{D}) \arrow{dl}[swap]{\pi_Z} \arrow{dr}{\pi_{\bar{Z}}}\\
(Z,D) & & (\bar{Z},\bar{D}) \,,
\end{tikzcd}
\end{center}
where $(\bar{Z},\bar{D})$ is a toric log Calabi-Yau surface with maximal 
boundary,
$\pi_Z$ is a sequence of corner blow-ups of 
$(Z,D)$, and $\pi_{\bar{Z}}$ is a sequence of 
interior blow-ups of $(\bar{Z},\bar{D})$.
We still denote $D_1$ and $D_2$ for the 
strict transforms of $D_1$ and $D_2$ in
$\tilde{Z}$ and for the images in 
$\bar{Z}$ of these strict transforms.

From now on, we fix a choice of such diagram. Our construction of $Q_\beta^{Y/D_1}$ will depend on this choice. 
Different choices will define 
mutation equivalent quivers.

As $(\bar{Z},\bar{D})$ is toric, it admits a fan 
in $\R^2$, whose rays are in bijection with the 
irreducible components of $\bar{D}$. The morphism 
$\pi_{\bar{Z}}$ is a sequence of interior blow-ups. 
As we will be ultimately interested only in the
Gromov-Witten theory of $(Y,D)$, which is deformation invariant, we can assume
that $\pi_{\bar{Z}}$ is a blow-up of 
$\bar{Z}$ in distinct smooth points $x_1,\dots,x_n$ of 
$\bar{D}$.
Indeed, a log Calabi-Yau surface for which one has to blow-up infinitely near points are non-generic 
and can always be deformed into one for which one has to only blow-up distinct points.
Let $E_{x_1},\dots,E_{x_n}$ be the corresponding exceptional divisors in $\tilde{Z}$.

We now construct the quiver $Q_\beta^{Y/D_1}$. By definition, 
$Q_{\beta}^{Y/D_1}$ has $n$ vertices,
in natural bijection with the 
points $x_1,\dots,x_n$.
For every $1\leqslant j \leqslant n$, let $m_j \in \Z^2$ be the primitive generator, pointing away from $0$, of the ray in the fan of $\bar{Z}$ dual to the irreducible 
component of $\bar{D}$ containing $x_j$. 
Let \[\langle -,-\rangle  \rightarrow \Z^2 \times \Z^2 \rightarrow \Z\]
\[((a,b),(a',b')) \mapsto \det((a,b),(a',b'))=ab'-a'b \]
be the standard non-degenerate skew-symmetric bilinear form 
on $\Z^2$. By definition, $Q_\beta^{Y/D_1}$ has 
$\max (\langle m_j,m_k \rangle, 0)$ arrows from the vertex $j$ to the vertex $k$.

We now construct the dimension vector $\bd(\beta)$.
A dimension vector for $Q_\beta^{Y/D_1}$ is an element in $\Z^{(Q_\beta^{Y/D_1})_0}$,
where $(Q_\beta^{Y/D_1})_0$ is the set of vertices of $Q_\beta^{Y/D_1}$. 
So in order to define $\bd(\beta)$, we have to define a non-negative integer 
$\bd(\beta)_j$ for every 
$1 \leqslant j \leqslant n$. We define 
\[\bd(\beta)_j \coloneqq ((\pi_Z)^{*} \beta_Z) \cdot E_{x_j} \,.\]
We have $\beta_Z \in H_2(Z,\Z)$, $(\pi_Z)^{*} \beta_Z
\in H_2(\tilde{Z},\Z)$, and so the intersection number with the divisor
$E_{x_j}$ of $\tilde{Z}$ makes sense.
We have $\bd(\beta) \in \Z^{(Q_\beta^{Y/D_1})_0}$ but a priori 
one could 
have $\bd(\beta) \notin \NN^{(Q_\beta^{Y/D_1})_0}$.

Remark that the strict transforms in $\tilde{Z}$ of the exceptional divisors 
$F_1,\dots,F_{\beta \cdot D_2}$
naturally form a subset of the set of exceptional divisors $E_1, \dots, E_n$.
As $\beta_Z\cdot D_j=1$, it follows that there is always a subset of cardinal $\beta \cdot D_2$ of the set of 
vertices of $Q_\beta^{Y/D_1}$, over which the dimension vector $\bd(\beta)$ takes the value $1$. 
In particular, as $\beta \cdot D_2>0$, $\bd(\beta)$ is always a primitive element of 
$\Z^{(Q_\beta^{Y/D_1})_0}$.

\subsection{The quiver Donaldson-Thomas side}
We refer to 
\cite{kontsevich2008stability},
\cite{MR2951762},
\cite{MR2650811},
\cite{MR2801406},
\cite{meinhardt2017donaldson}
for Donaldson-Thomas theory of quivers.

Let $Q$ be an acyclic quiver, that is, a quiver without oriented cycles. Denote by $Q_0$ its set of vertices, and $N=\Z^{Q_0}=\oplus_{i\in Q_0} \Z e_i$.
Every $\theta=(\theta_j)_{j \in Q_0}
\in \Hom(N,\Z)$ defines a notion of stability for representations of $Q$. For every 
dimension vector $\mathbf{d} \in N$, we then have a 
projective variety $M_{\bd}^{\theta-ss}$, moduli space of $\theta$-semistable representations of $Q$ of dimension $\bd$,
containing the open smooth locus 
$M_\bd^{\theta-st}$ of $\theta$-stable representations. Denote 
$\iota \colon M_\bd^{\theta-st} \hookrightarrow M_\bd^{\theta-ss}$ the natural inclusion. 
Let $\{-,-\}$ be the skew-symmetric form on $N$ defined by $\{e_j,e_k\}:=a_{jk}-a_{kj}$, where $a_{jk}$ is the number of arrows from $j$ to $k$ in $Q$. 
Following \cite[\S 2.2]{meinhardt2017donaldson}, we say that a stability $\theta$ is $\infty$-generic\footnote{Here, $\infty$ is the slope of $-\theta(p)+i$ when $\theta(p)=0$. } if for every $\mathbf{d}_1,\mathbf{d}_2\in N$,  $\theta(\mathbf{d}_1)=\theta(\mathbf{d}_2)=0$ implies $\{\mathbf{d}_1, \mathbf{d}_2 \}=0$.

The main result of \cite{meinhardt2017donaldson} is that the Laurent polynomials
\[ \Omega_\bd^{Q_\fm, \theta}
(q^{\frac{1}{2}})
\coloneqq 
(-1)^{\dim M_\bd^{\theta-ss}} q^{-\frac{1}{2}
\dim M_\bd^{\theta-ss}}
\sum_{j=0}^{\dim M_\bd^{\theta-st}}
\left( \dim H^{2j} (M_\bd^{\theta-ss}, \iota_{!*}\Q)\right)
q^j \]
\[\in  (-1)^{\dim M_\bd^{\theta-ss}}q^{-\frac{1}{2}
\dim M_\bd^{\theta-ss}} \NN[q] \]
are the refined Donaldson-Thomas invariants of $Q$ for an $\infty$-generic stability $\theta$.
In the above formula,
$\iota_{!*}$ is the intermediate extension functor defined by $\iota$ and so
$\iota_{!*} \Q$ is a perverse sheaf on
$M_\bd^{\theta-ss}$.

We apply the previous definitions to the quiver $Q_\beta^{Y/D_1}$ constructed 
in Section \ref{quiver_construction}.
If $Q_{\beta}^{Y/D_1}$ is acyclic and $\bd(\beta) \in \NN^{(Q_{\beta}^{Y/D_1})_0}$,
we define 
\[\Omega_\beta^{Y/D_1}(q^{\frac{1}{2}})
\coloneqq \Omega_{\bd(\beta)}^{Q_{\beta}^{Y/D_1},\theta}(q^{\frac{1}{2}})\,, \]
where $\theta$ is the \emph{anti-attractor stability} given by
$\theta:= \{-,\bd(\beta)\}$. For the quiver $Q_{\beta}^{Y/D_1}$, the skew-symmetric $\{-,-\}$ is of rank $2$, and so the anti-attractor stability $\theta:= \{-,\bd(\beta)\}$ is $\infty$-generic.
If $\bd(\beta) \notin \NN^{(Q_{\beta}^{Y/D_1})_0}$, we set 
\[\Omega_\beta^{Y/D_1}(q^{\frac{1}{2}}) \coloneqq 0\,.\]

\subsection{Statement of the main result}
Using the notations introduced in the previous Sections, we can state our main result, cited as Theorem \ref{main_thm_precise_intro} in the Introduction.

\begin{thm}\label{main_thm_precise}
If the quiver $Q_{\beta}^{Y/D_1}$ is acyclic, 
then we have an equality of formal power series in $\hbar$:
\[ \Omega_\beta^{Y/D_1}(q^{\frac{1}{2}})
=(-1)^{\beta \cdot D_1+1} 
\left( 2 \sin \left( \frac{\hbar}{2}
\right) \right) 
\left( 
\frac{\hbar}{2 \sin \left( \frac{\hbar}{2} \right)}
\right)^{\beta \cdot D_2}
\left( \sum_{g\geqslant 0} N_{g,\beta}^{Y/D_1} 
\hbar^{2g-1} \right) \,,\]
where $q=e^{i \hbar}=\sum_{n \geqslant 0} \frac{(i\hbar)^n}{n!}$.
\end{thm}

Taking the leading order term on both sides of Theorem
\ref{main_thm_precise} in the limit $\hbar \rightarrow 0$,
$q^{\frac{1}{2}} \rightarrow 1$, we get the following Corollary,
cited as Theorem \ref{main_thm_0} in the Introduction.

\begin{cor}
If the quiver $Q_{\beta}^{Y/D_1}$ is acyclic, 
then we have 
\[ \Omega_\beta^{Y/D_1}
=(-1)^{\beta \cdot D_1+1} N_{0,\beta}^{Y/D_1} \,.\]
\end{cor}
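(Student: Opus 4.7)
The plan is to deduce Theorem \ref{main_thm_precise} by combining a higher genus refined version of Theorem \ref{thm_gw_0} with the Gromov--Witten/Kronecker correspondence of Section~8.5 of \cite{bousseau2018quantum_tropical}. Concretely, I would first prove the generating series equality
\[ \sum_{g \geqslant 0} N_{g,\beta}^{Y/D_1} \hbar^{2g-1} = \sum_{g \geqslant 0} N_{g,\beta_Z}^{Z/D_1} \hbar^{2g-1}, \]
where on the right $(Z, D_1 \cup D_2)$ is the log Calabi--Yau surface constructed in Section \ref{quiver_construction} and $N_{g,\beta_Z}^{Z/D_1}$ is the analogous $\lambda_g$-twisted log invariant. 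Granting this equality, Theorem \ref{main_thm_precise} is immediate: feeding the toric model $(\tilde{Z},\tilde{D}) \to (\bar{Z},\bar{D})$ of Section \ref{quiver_construction} into the general form of the GW/Kronecker correspondence produces exactly the refined quiver invariant $\Omega_{\bd(\beta)}^{Q_\fm}(q^{1/2})$, together with the prescribed sign $(-1)^{\beta \cdot D_1 + 1}$ and the $2 \sin(\hbar/2)$ factor.

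The generating series equality itself will be established in two steps, both via the log decomposition formula of Abramovich--Chen--Gross--Siebert \cite{abramovich2017decomposition}, exactly in the spirit of the uses made in \cite{bousseau2017tropical} and \cite{bousseau2018quantum_tropical}. First I would degenerate $Y$ to the normal cone of $D_2$, simultaneously moving the $\beta \cdot D_2$ general points of $Y$ onto $D_2$; applying the decomposition formula, this should replace the absolute point insertions $\ev_k^*(\pt)$ by $\beta \cdot D_2$ relative point insertions along $D_2$, producing a log invariant of $(Y, D_1 \cup D_2)$ with maximal tangency along $D_1$ and one simple tangency at each of $\beta \cdot D_2$ fixed points on $D_2$. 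Second, I would degenerate this log invariant by blowing up the chosen points on $D_2$: the exceptional divisors absorb the relative point conditions and the main component on $Z$ carries exactly the log Gromov--Witten count $N_{g,\beta_Z}^{Z/D_1}$, while the bubble components contribute trivially through simple multiplicity/vertex factors.

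The hard part will be justifying the naive form of the degeneration formula in both steps. In \cite{bousseau2017tropical,bousseau2018quantum_tropical} the relevant components of the special fibre are toric or nearly toric, so the balancing condition at tropical vertices, together with the degree constraints, pins down the unique contributing rigid tropical type. Here the special fibres are genuinely non-toric, and there is a large family of rigid tropical types that are combinatorially admissible but must nevertheless be shown to contribute zero. Because the decomposition formula is virtual, these extra rigid types need not be tropicalisations of actual stable log maps (compare Example~6.4 of \cite{abramovich2017decomposition}), so the combinatorial input alone is insufficient. The strategy I would adopt is to classify first the tropicalisations of all stable log maps contributing to the invariant, extract from them numerical and homological constraints using $\beta \cdot D_1$, $\beta \cdot D_2$ and the anticanonical condition on $D_1 \cup D_2$, and then identify the rigid tropical types contributing to the formula as possible rigid limits within this larger family. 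The $\lambda_g$ insertion is the essential homological tool here: by the standard vanishing it forces rational tails and kills contributions from components of positive genus carrying nontrivial moduli, which is precisely what enforces the one-rigid-type behaviour expected from the naive relative degeneration formula.

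Once this tropical and virtual bookkeeping has been carried out for both degenerations, one reads off $\sum_g N_{g,\beta}^{Y/D_1} \hbar^{2g-1} = \sum_g N_{g,\beta_Z}^{Z/D_1} \hbar^{2g-1}$, and the proof concludes by quoting the refined GW/Kronecker correspondence of \cite[Section~8.5]{bousseau2018quantum_tropical} applied to $(Z, D_1 \cup D_2)$, the class $\beta_Z$ (which has maximal tangency $\beta \cdot D_1$ at a single point of $D_1$ and is disjoint from $D_2$), and the fixed toric model of Section \ref{quiver_construction}. The corollary stated as Theorem \ref{main_thm_0} then follows by taking the leading order term as $\hbar \to 0$.
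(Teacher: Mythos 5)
Your proposal follows essentially the same route as the paper: two degenerations (first to the normal cone of $D_2$ to trade absolute for relative point conditions, then blowing up the points on $D_2$) controlled by the Abramovich--Chen--Gross--Siebert decomposition formula, with the non-realizable rigid tropical types eliminated by classifying the tropicalizations of actual stable log maps and invoking the vanishing of $\lambda_g$, followed by the refined GW/Kronecker correspondence of \cite{bousseau2018quantum_tropical} and the $\hbar \to 0$ limit. The only minor differences are presentational --- the paper routes through the intermediate invariants $N_{g,\beta}^{Y/D}$ with explicit $\left(2\sin\left(\frac{\hbar}{2}\right)\right)^{-\beta\cdot D_2}$ factors that cancel between the two propositions, and the precise $\lambda_g$ vanishing used is for families whose dual graph contains a cycle (rather than for positive-genus components per se) --- so your sketch matches the paper's argument.
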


\section{Exchange of absolute with relative point conditions}
\label{section_deg_point}

We start the proof of Theorem \ref{main_thm_precise}. 
Our goal is to fit the 
invariants $N_{g,\beta}^{Y/D_1}$ into the framework of 
Section 8.5 of \cite{bousseau2018quantum_tropical}.
In the present Section, we realize the first step: we exchange the $\beta \cdot D_2$ absolute point 
conditions entering the definition of 
$N_{g,d}^{Y/D_1}$ with $\beta \cdot D_2$ point conditions relative to
the divisor $D_2$.

\subsection{Statement}
We refer to \cite{MR3011419},
\cite{MR3224717}, \cite{MR3257836}, \cite{abramovich2017decomposition}
for the theory of stable log maps.
We denote $D \coloneqq D_1 \cup D_2$. We view $Y$ as a 
smooth log scheme, with the divisorial log structure defined 
by $D$.
We fix $\beta \in H_2(Y,\Z)$ such that 
$\beta \cdot D_1 >0$ and $\beta \cdot D_2 > 0$.
Let 
$\overline{M}_g(Y/D,\beta)$ be the moduli space of 
genus $g$ 
class $\beta$ 
stable log maps to $Y$, with contact order $\beta \cdot D_1$ to $D_1$ in a single point, 
and with contact order $1$ to $D_2$ in $\beta \cdot D_2$ points.
It is a proper Deligne-Mumford stack, admitting a virtual fundamental class
\[[\overline{M}_g(Y/D,\beta)]^\virt 
\in A_{g+\beta \cdot D_2}(\overline{M}_g(Y/D,\beta),\Q)\,.\]
Let $\ev_k \colon \overline{M}_g(Y/D,\beta)
\rightarrow D_2$, for 
$k=1,\dots, \beta \cdot D_2$, be the evaluation maps at the 
$\beta \cdot D_2$ contact points with $D_2$. 
We define 
\[ N_{g,\beta}^{Y/D}
\coloneqq \int_{[\overline{M}_g(Y/D,\beta)]^\virt}
(-1)^g \lambda_g \prod_{k=1}^{\beta \cdot D_2} \ev_k^{*}(\pt) \in \Q \,,\]
where $\pt \in A^1(Y)$ is the class of a point and 
$\lambda_g$ is the top Chern class of the Hodge bundle 
over $\overline{M}_g(Y/D,\beta)$.

\begin{prop} \label{prop_deg_point}
We have 
\[\sum_{g \geqslant 0}N_{g,\beta}^{Y/D_1} \hbar^{2g-1}
=\left( \sum_{g \geqslant 0} N_{g,\beta}^{Y/D} \hbar^{2g-1+\beta \cdot D_2} \right) \left( \frac{1}{\hbar}\right)^{\beta \cdot D_2} \,.\]
\end{prop}

Exchanging absolute and relative Gromov-Witten invariants by some
degeneration argument is standard in Gromov-Witten theory relative to a smooth divisor, see for example \cite{maulik2006topological}.
In particular, Proposition \ref{prop_deg_point} has exactly the form given by the usual degeneration formula 
\cite{MR1938113} applied for invariants defined relatively to $D_2$. 
The issue is that our invariants have also tangency conditions along $D_1$ and so we
are really dealing with log Gromov-Witten invariants relative to the normal crossing divisor $D=D_1 \cup D_2$. In order to prove Proposition \ref{prop_deg_point}, we have to show that the 
a priori possible corrections to the naive degeneration formula coming from 
the intersection points $D_1 \cap D_2$ actually are not there.

\subsection{Summary of the proof of Proposition \ref{prop_deg_point}}
The proof of Proposition \ref{prop_deg_point} takes the rest of Section \ref{section_deg_point}.
In Section \ref{degeneration}, we describe the relevant degeneration 
to the normal cone of $D_2$. The special fiber of the degeneration has a natural
log structure whose tropicalization is described in Section
\ref{trop_special}. This gives us a way to describe the relevant log Gromov-Witten 
invariants of the special fiber in Section \ref{inv_special}. The core of the proof is Section
\ref{tropical_curves}, which is a study of the possible tropicalizations of stable log maps contributing to these invariants. 
In Section \ref{decomp_formula}, we apply to our specific context the general decomposition formula of \cite{abramovich2017decomposition}, and in Section
\ref{rigid_tropical} we identify using the results of Section
\ref{tropical_curves} the relevant rigid tropical curves.
We conclude the proof in Section \ref{end_proof}, referring to the gluing techniques already used in
\cite{bousseau2017tropical} and \cite{bousseau2018quantum_tropical}.

\subsection{Degeneration} \label{degeneration}
Let $X$ be the degeneration of $Y$ to the normal cone of $D_2$, that is, the blow-up of 
$D_2 \times \{0\}$ in $Y \times \A^1$. Let
$\pi \colon X \rightarrow \A^1$ be the natural projection. 
We have 
$\pi^{-1}(t) \simeq Y$ if $t \neq 0$. 
Let $N_{D_2|Y}$ be the normal line bundle of $D_2$ in $Y$, and let 
$\PP$ be the projective bundle over $D_2$ obtained as the projectivization 
of $N_{D_2|Y}\oplus \cO_{D_2}$. If $p \in D_2$, we denote 
$\PP^1_p$ the fiber of the projection
$\PP \rightarrow D_2$ over $p$.
The embeddings  
$\cO_{D_2} \hookrightarrow N_{D_2|Y}\oplus \cO_{D_2}$
and $N_{D_2|Y} \hookrightarrow N_{D_2|Y}\oplus \cO_{D_2}$
define two sections of $\PP \rightarrow D$, that we denote 
$D_{2,0}$ and $D_{2,\infty}$ respectively.
The special fiber $\pi^{-1}(0)$ has two irreducible components,
$Y$ and $\PP$, with $D_2$ in $Y$ glued with $D_{2,0}$ in 
$\PP$.

We also degenerate the $\beta \cdot D_2$ point conditions. For $j=1,\dots,\beta \cdot D_2$, 
let $\sigma_j$ be a section of 
$Y \times \A^1$ such that $\sigma_j(t) \notin D_1 \cup D_2$ for $t \neq 0$ and 
$\sigma_j(0) \in D_2-(D_1 \cap D_2)$. 
We choose these sections such that $\sigma_j(0) \neq \sigma_k(0)$
if $j \neq k$, and such that $\sigma_j$ is transverse to $D_2 \times \{0\}$. For 
$j=1,\dots, \beta \cdot D_2$, we denote 
$\tilde{\sigma}_j$ the section of $\pi \colon X \rightarrow \A^1$
obtained as strict transform of $\sigma_j$. By construction, the 
$\beta \cdot D_2$ points $\tilde{\sigma}_j(0)$
are in distinct fibers of $\PP \rightarrow D_2$ and are away from 
$D_{2,0} \cup D_{2,\infty} \cup \PP^1_{p_1} \cup \PP^1_{p_2}$, where $p_1$
and $p_2$ are the two intersection points of $D_1$ with $D_2$.
For $1 \leqslant j \leqslant \beta \cdot D_2$, we denote $\PP^1_{\tilde{\sigma}_j(0)}$
the fiber of $\PP \rightarrow D_2$ passing through $\tilde{\sigma}_j(0)$.

Let $\widetilde{D_1 \times \A^1}$ be the divisor on $X$ obtained as strict transform of 
the divisor $D_1 \times \A^1$ in $Y \times \A^1$. We view $X$ as a log scheme for 
the divisorial log structure defined by the normal crossing divisor 
$\widetilde{D_1 \times \A^1} \cup \pi^{-1}(0)$. We view $\A^1$ as a log scheme for the 
toric divisorial log structure. Then $\pi$ naturally defines a log smooth morphism.
Remark that for $t \neq 0$, $\pi^{-1}(t)$ with the log structure restricted from 
$X$ is $Y$ with the divisorial log structure defined by $D_1$.

We denote $X_0 \coloneqq \pi^{-1}(0)$ with the log structure restricted from $X$.
Let $\pt_{\NN}$ be the standard log point, that is, the point $0 \in \A^1$ with the log structure 
restricted from $\A^1$.
Then $\pi$ induces by restriction a log smooth morphism 
$\pi_0 \colon X_0 \rightarrow \pt_\NN$.

\subsection{Tropicalization of the special fiber} \label{trop_special}
We refer to Appendix B of
\cite{MR3011419} 
and Section 2 of \cite{abramovich2017decomposition}
for the general notion of tropicalization of a log scheme.

Tropicalizing the log morphism $\pi_0
\colon X_0
\rightarrow \pt_\NN$, we get a morphism 
of cone complexes
$\Sigma(\pi_0)
\colon
\Sigma(X_0)
\rightarrow \Sigma(\pt_\NN)$.
We have 
$\Sigma(\pt_\NN) = \R_{\geqslant 0}$
and $\Sigma(X_0)$ is naturally
identified with the cone over the fiber 
$\Sigma(\pi_0)^{-1}(1)$ at $1 \in \R_{\geqslant 0}$. 
It is thus enough to describe the cone complex 
$\Sigma(\pi_0)^{-1}(1)$.
We denote 
\[X_0^\trop \coloneqq\Sigma(\pi_0)^{-1}(1) \,.\]
The cone complex $X_0^\trop$
consists of:
\begin{itemize}
    \item Vertices $v_{Y}$ and 
$v_{\PP}$ respectively dual to $Y$
and $\PP$.
    \item Unbounded edges 
$e_{D_1}$, $e_{{\PP^1_{p_1}}}$, $e_{{\PP^1_{p_2}}}$
respectively dual to $D_1$, $\PP^1_{p_1}$, $\PP^1_{p_2}$.
    \item One bounded edge $e_{D_2}$ dual to $D_2$.
    \item Faces
$f_{p_1}$ and $f_{p_2}$ dual to 
$p_1$ and $p_2$.
\end{itemize}

It is important to realize that $X_0^\trop$ is an abstract cone complex, 
with no natural embedding in the plane. In particular, in $X_0^\trop$, the 
unbounded edge $e_{D_1}$ is parallel to both $e_{\PP^1_{p_1}}$ and 
$e_{\PP^1_{p_2}}$.
We draw below a picture of $X_0^\trop$, where the two copies of $e_{D_2}$
and the two copies of $v_{\PP}$ have to be identified.



\begin{center}
\setlength{\unitlength}{1cm}
\begin{picture}(10,5)
\thicklines
\put(3,4){\circle*{0.1}}
\put(5,4){\circle*{0.1}}
\put(7,4){\circle*{0.1}}
\put(3,4){\line(1,0){2}}
\put(5,4){\line(1,0){2}}
\put(3,4){\line(0,-1){3}}
\put(5,4){\line(0,-1){3}}
\put(7,4){\line(0,-1){3}}
\put(2.3,2){$e_{{\PP^1_{p_1}}}$}
\put(7.1,2){$e_{{\PP^1_{p_2}}}$}
\put(2.4,4){$v_{\PP}$}
\put(7.1,4){$v_{\PP}$}
\put(5.1,3.7){$v_Y$}
\put(3.8,4.2){$e_{D_2}$}
\put(5.8,4.2){$e_{D_2}$}
\put(5.1,2){$e_{D_1}$}
\put(3.7,2.8){$f_{p_1}$}
\put(5.7,2.8){$f_{p_2}$}
\put(5,4.3){\oval(4,0.4)[t]}
\end{picture}
\end{center}
Remark that $\Sigma(\pi_0)^{-1}(0)$ is the ``asymptotic version" of $X_0^\trop$,
and is a ray
\begin{center}
\setlength{\unitlength}{1cm}
\begin{picture}(10,4)
\thicklines
\put(5,4){\circle*{0.1}}
\put(5,4){\line(0,-1){3}}
\put(5.1,3.7){$v_Y^0$}
\put(5.1,2){$e_{D_1}^0$}
\end{picture}
\end{center}
which can be identified with the tropicalization of $Y$, viewed as a
log scheme for the divisorial log structure defined by $D_1$.

Most of the tropical arguments of the following Sections will take place in 
$X_0^\trop$ and will use the above picture of $X_0^\trop$. In particular, 
the terms ``vertical", ``horizontal", ``ordinates" will refer to the corresponding notions in this
picture.

Contact orders for stable log maps to a log scheme are integral points in the 
tropicalization of this log scheme (see Definition 2.3.12 of \cite{abramovich2017decomposition}).
We denote $u_p$ the contact order for stable log maps to $X_0$ defined by $\beta \cdot D_1$ times the integral generator 
(pointing away from $v_Y^0$) of the edge 
$e_{D_1}^0$ in $\Sigma(\pi_0)^{-1}(0)$.

\subsection{Invariants of the special fiber} \label{inv_special}

Let $\overline{M}_g(X_0,\beta)$ be the moduli space of $\beta \cdot D_2$-pointed genus $g$ class $\beta$ stable 
log maps to $\pi_0 \colon X_0 \rightarrow \pt_\NN$, with a single point of contact order $u_p$.
Let 
\[[\overline{M}_g(X_0,\beta)]^{\virt} \in A_{g+2(\beta \cdot D_2)}(\overline{M}_g(X_0,\beta),\Q)\]
be its virtual fundamental class.

For simplicity, we work below in the category of stacks. In general, one should work in the category of fs log stacks to correctly define point constraints in log Gromov-Witten theory, see Section 6.3 of \cite{abramovich2017decomposition}. In our case, it does not matter because the log morphisms 
$\tilde{\sigma}_j(0) \colon \pt_{\NN} \rightarrow X_0$ are strict, that is, 
the log structure on $\pt_{\NN}$ is the pullback by $\tilde{\sigma}_j(0)$ of the log structure on $X_0$.

The $\beta \cdot D_2$ marked points define an evaluation morphism
\[\ev \colon \overline{M}_g(X_0,\beta) \rightarrow (X_0)^{\beta \cdot D_2} \,.  \]
Denote 
\[ \iota_{\tilde{\sigma}} \colon 
\tilde{\sigma}(0)=\{
(\tilde{\sigma}_1(0),\dots,\tilde{\sigma}_{\beta \cdot D_2}(0) ) \} \hookrightarrow (X_0)^{\beta \cdot D_2} \,,\]
\[\overline{M}_g(X_0,\beta,\tilde{\sigma}) \coloneqq \overline{M}_g(X_0,\beta) \times_{(X_0)^{\beta \cdot D_2}} 
\tilde{\sigma}(0) \,,\]
and 
\[[\overline{M}_g(X_0,\beta,\tilde{\sigma})]^{\virt} \coloneqq \iota_{\tilde{\sigma}}^!
[\overline{M}_g(X_0,\beta)]^{\virt} 
\in A_g(\overline{M}_g(X_0,\beta,\tilde{\sigma}),\Q)\,.\]
By deformation invariance of log Gromov-Witten invariants,
we have 
\[ N_{g,\beta}^{Y/D_1} = \int_{[\overline{M}_g(X_0,\beta,\tilde{\sigma})]^\virt}
(-1)^g \lambda_g \,.\]

\subsection{Tropical curves} \label{tropical_curves}
Let
\begin{center}
\begin{tikzcd}
C \arrow{r}{f} \arrow{d}{\nu}
& X_0 \arrow{d}{\pi_0}\\
W \arrow{r}{g} & \pt_\NN \,,
\end{tikzcd}
\end{center}
be an element of $\overline{M}_g(X_0,\beta,\tilde{\sigma})$.
Let 
\begin{center}
\begin{tikzcd}
\Sigma(C) \arrow{r}{\Sigma(f)} \arrow{d}{\Sigma(\nu)}
& \Sigma(X_0) \arrow{d}{\Sigma(\pi_0)}\\
\Sigma(W) \arrow{r}{\Sigma(g)} & \Sigma(\pt_\NN)
\end{tikzcd}
\end{center}
be its tropicalization.
For every $b \in \Sigma(g)^{-1}(1)$,
let 
\[ \Sigma(f)_b \colon
\Sigma(C)_b \rightarrow \Sigma(\pi_0)^{-1}
(1) =X_0^\trop \]
be the fiber of $\Sigma(f)$ over $b$.

The definition of $\overline{M}_g(X_0,\beta,\tilde{\sigma})$ fixes the set of unbounded edges of 
$\Sigma(C)_b$. The $\beta \cdot D_2$ marked points are dual to 
$\beta \cdot D_2$ unbounded edges $E_1,\dots, E_{\beta \cdot D_2}$ of $\Sigma(C)_b$.
The contact order $u_p$ at a single point is dual to an unbounded edge 
$E_0$ of $\Sigma(C)_b$. Furthermore, $\Sigma(C)_b$ has no other unbounded edge.

The fact that the marked points have to be mapped to $\tilde{\sigma}_1(0),
\dots,\tilde{\sigma}_{\beta \cdot D_2}(0)$ implies that the unbounded edges 
$E_j$, $1 \leqslant j \leqslant \beta \cdot D_2$, are all contracted by $\Sigma(f)_b$
onto the vertex $v_\PP$. The contact order $u_p$
implies that $E_0$ is mapped by $\Sigma(C)_b$ onto an unbounded ray in 
$X_0^\trop$, going down in a way parallel to 
$e_{\PP^1_{p_1}}$, $e_{D_1}$ and $e_{\PP^1_{p_2}}$.
The fact that the image of $E_0$ by $\Sigma(C)_b$ can a priori
be any ray parallel to $e_{D_1}$ reflects the fact that 
a stable  log map to $Y$ with a single point of maximal tangency along $D_1$
degenerates in a stable log map to $X_0$ with ``maximal tangency"
(in some logarithmic sense) along $D_1 \cup \PP^1_{p_1} \cup \PP^1_{p_2}$,
and a priori not only along $D_1$. 

The tropical balancing condition has to be satisfied everywhere except at the vertices $v_Y$ 
and $v_{\PP}$ (the corresponding components do not have toric divisorial log structures).
At $v_{\PP}$, the tropical balancing condition is only modified in the 
horizontal direction: it still holds in the vertical direction.
This follows from the general form of the
balancing conditions for stable log maps
given in Proposition 1.15 of \cite{MR3011419}.

For all $1 \leqslant j \leqslant \beta \cdot D_2$, we denote
$V_j$ the vertex of $\Sigma(C)_b$ to which the unbounded edge $E_j$
is attached. A priori, one could have $V_j = V_k$ for some 
$j \neq k$. We have $\Sigma(f)_b(V_j)=v_\PP$
for all $1 \leqslant j \leqslant \beta \cdot D_2$.
We denote $V_0$ the vertex of $\Gamma$ to which the 
unbounded edge $E_0$ is attached.

We refer to Definition 2.5.3 of \cite{abramovich2017decomposition} for details 
on edge marking.
For $V$ a vertex of $\Sigma(C)_b$ and $E$ an edge adjacent to 
$V$, we denote $v_{V,E}$ the slope measured from $V$
of the restriction of $\Sigma(f)_b$ to $E$. Given our picture of 
$X_0^\trop$, we can identify $v_{V,E}$ with a well-defined element of $\Z^2$, except if 
$\Sigma(f)_b(E)$ is contained in $e_{D_2}$, in which case $v_{V,E}$
defines two elements of $\Z^2$ corresponding to the two copies of $e_{D_2}$ and identified 
by the map gluing the two copies of $e_{D_2}$.

Using the same identification with $\Z^2$, we can write $u_p=(-\beta \cdot D_1,0)$, and we will use the standard scalar product of vectors in $\Z^2$. In particular, 
$v_{V,E} \cdot u_p=0$ is equivalent to $\Sigma(f)_b$ horizontal, 
$v_{V,E} \cdot u_p>0$ is equivalent to $\Sigma(f)_b$ pointing downward from $V$,
and $v_{V,E} \cdot u_p<0$ is equivalent to $\Sigma(f)_b$ pointing upward from $V$.

\begin{prop} \label{prop_trop_curves}
Let
\begin{center}
\begin{tikzcd}
C \arrow{r}{f} \arrow{d}{\nu}
& X_{0} \arrow{d}{\pi_0}\\
W \arrow{r}{g} & \pt_\NN \,,
\end{tikzcd}
\end{center}
be an element of $\overline{M}_g(X_0,\beta,\tilde{\sigma})$.
Let 
\begin{center}
\begin{tikzcd}
\Sigma(C) \arrow{r}{\Sigma(f)} \arrow{d}{\Sigma(\nu)}
& \Sigma(X_0) \arrow{d}{\Sigma(\pi_0)}\\
\Sigma(W) \arrow{r}{\Sigma(g)} & \Sigma(\pt_\NN) 
\end{tikzcd}
\end{center}
be its tropicalization.
Let $b$ be in the interior of $\Sigma(g)^{-1}(1)$.
Assume that $\Sigma(C)_b$ is a graph of genus $0$. Then:
\begin{itemize}
    \item For every $V$ vertex of $\Sigma(C)_b$, we have
    $\Sigma(f)_b(V)\in (e_{D_1} \cup e_{D_2})$.
    \item Let $V$ and $V'$ be vertices of $\Sigma(C)_b$ such that 
    $\Sigma(f)_b(V)$ is contained in the interior of $e_{D_1}$
    and $\Sigma(f)_b(V')$ is contained in the interior of $e_{D_2}$.
    Then, there is no edge of $\Sigma(C)_b$ connecting $V$ and $V'$.
    \item We have $E_0 \subset e_{D_1}$.
\end{itemize}
\end{prop}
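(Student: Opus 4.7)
The strategy is to combine the tropical balancing condition at every vertex whose image is neither $v_Y$ nor $v_\PP$ with the vertical-only balancing at $v_\PP$ to severely restrict the shape of the tree $\Sigma(C)_b$. A key feature of the picture of $X_0^\trop$ is that $v_Y$, $v_\PP$, and $e_{D_2}$ all sit at the top (ordinate zero) while $e_{D_1}$, $e_{\PP^1_{p_i}}$, $f_{p_i}$ extend downward; consequently no cone adjacent to a $v_\PP$-vertex or to an interior $e_{D_2}$-vertex lies strictly above it.

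The first step is to analyze edges at vertices mapping to $v_\PP$ and to the interior of $e_{D_2}$: since every adjacent cone lies at or below these vertices, the vertical slopes of all edges there are non-negative; the appropriate balancing (vertical-only at $v_\PP$, full at an $e_{D_2}$-interior vertex) then forces every such vertical slope to vanish. Hence every edge at a $v_\PP$-vertex maps into $e_{D_2}$ or is contracted, and every edge at an $e_{D_2}$-interior vertex stays in $e_{D_2}$. An analogous argument at a vertex in the interior of $e_{\PP^1_{p_i}}$, using that the sideways-into-$f_{p_i}$ slopes all have the same sign, forces every edge there to remain in $e_{\PP^1_{p_i}}$.

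Claim 2 now follows immediately: an edge from an $e_{D_1}$-interior vertex to an $e_{D_2}$-interior vertex would have nonzero vertical slope at its $e_{D_2}$-endpoint, contradicting the previous paragraph. For the $e_{\PP^1_{p_i}}$ portion of Claims 1 and 3: the vertices of $\Sigma(C)_b$ that map to $e_{\PP^1_{p_i}}$ interior (if any) form a subtree with all edges inside $e_{\PP^1_{p_i}}$, and this subtree admits no edge to any other vertex of $\Sigma(C)_b$ (the only conceivable portal, a $v_\PP$-vertex, admits only horizontal edges by the above). Since $\beta \cdot D_2 > 0$ there is at least one $v_\PP$-vertex to which the $E_j$, $j \geq 1$, are attached, and this lies in a connected component of $\Sigma(C)_b$ disjoint from the putative subtree. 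By connectedness of the tree $\Sigma(C)_b$, the subtree must be empty and $E_0$ cannot lie in $e_{\PP^1_{p_i}}$.

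The remaining cases---vertices in $f_{p_i}$ interior, and the possibility $V_0 \in f_{p_i}$ interior---are not excluded by balancing alone, and here I plan to invoke the intersection-theoretic arguments the introduction alludes to. If $V_0 \in f_{p_i}$ interior, then $C_{V_0}$ is contracted to $p_i$ and the contact order $\beta \cdot D_1$ at $E_0$ would propagate as upward flow along the tree back to $v_Y$; tracking the class of each component together with the global identity $f_{*}[C] \cdot [\widetilde{D_1 \times \A^1}] = \beta \cdot D_1$, decomposed across components of $X_0$ as $\beta_Y \cdot D_1$ plus a contribution from the fibers $\PP^1_{p_1}, \PP^1_{p_2}$, and exploiting the effectivity of the $Y$-side class $\beta_Y$, would yield the contradiction. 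The main obstacle, and the reason this step needs care, is the point emphasized in the introduction: the decomposition formula being virtual, a rigid tropical curve need not be the tropicalization of any stable log map, so I would have to study tropicalizations of all relevant stable log maps and identify the contributing rigid ones as their rigid limits.
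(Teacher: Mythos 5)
Your opening steps track the easy part of the paper's argument, but the proposal has genuine gaps, and the decisive one is exactly where you write ``here I plan to invoke the intersection-theoretic arguments.'' In the paper that is not a supplementary case analysis; it is the core of the proof. The exclusion of vertices over the interiors of $f_{p_1},f_{p_2}$ and of $e_{\PP^1_{p_1}},e_{\PP^1_{p_2}}$, and the statement $E_0\subset e_{D_1}$, are obtained globally (Lemmas \ref{lem_no_h_component}--\ref{lem_no_f}): every component of $C$ mapping to $\PP$ lies in a fiber of $\PP\rightarrow D_2$; the $\beta\cdot D_2$ marked points force $\beta\cdot D_2$ \emph{distinct} connected components $\tau_j$ of $\Gamma_\PP$ (the points $\tilde\sigma_j(0)$ lie on distinct fibers), each with $d_{\tau_j}\geqslant 1$; then the chain $\beta\cdot D_2\geqslant \beta_Y\cdot D_2\geqslant\sum_\tau d_\tau\geqslant\sum_j d_{\tau_j}\geqslant\beta\cdot D_2$ collapses to equalities, so no component mapped to $Y$ meets $D_2$ away from the fibers $\PP^1_{\tilde\sigma_j(0)}$, hence nothing passes through $p_1,p_2$ and no vertex can lie over the faces or over $e_{\PP^1_{p_i}}$. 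Your sketch (tracking $f_*[C]\cdot[\widetilde{D_1\times\A^1}]$ and effectivity of $\beta_Y$) never uses the point conditions, which are indispensable: without them components through $p_1$ or $p_2$ are perfectly possible and the conclusion fails. Also, the closing caveat about rigid tropical curves not being realizable is misplaced here: this Proposition concerns an actual stable log map and its actual tropicalization; the virtual subtlety only enters later, in the decomposition-formula step.

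The local steps you do carry out are overstated, and the tell is that you never use the genus-$0$ hypothesis, which the paper needs already at this stage. The zero-sum balancing you invoke holds only in directions tangent to the cone dual to the stratum the component maps into; in transverse directions it is modified by degrees of the corresponding line bundles (Proposition 1.15 of \cite{MR3011419}). So at a vertex over the interior of $e_{D_2}$ dual to a component dominating $D_2$, or over $v_\PP$ dual to a multisection of $\PP\rightarrow D_2$, there genuinely are downward edges (recording the intersections with $\widetilde{D_1\times\A^1}$ at $p_1,p_2$) with nothing to compensate them, and ``balancing forces all vertical slopes to vanish'' is not available; the paper kills these configurations with genus $0$, via Lemmas \ref{lem_descent} and \ref{lem_D_2} (two descending paths to $V_0$ would produce a cycle). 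Similarly, your claim that every edge at an interior-$e_{\PP^1_{p_i}}$ vertex stays in $e_{\PP^1_{p_i}}$ is false as a local statement: a multiple cover of $\PP^1_{p_i}$ has a sideways edge at $p_i$ into $\overline{f_{p_i}}$, balanced against $\deg f^*\cO_X(Y)$, and such covers, together with edges joining them across the face to $e_{D_1}$- or $v_Y$-vertices (possibilities your ``only portal is a $v_\PP$-vertex'' step overlooks), can only be excluded by the global count above. Finally, you have no analogue of Lemma \ref{lem_V_0_lower}, which is what rules out $\Sigma(f)_b(V_0)\in e_{D_2}$ and completes the proof that $E_0\subset e_{D_1}$.
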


The proof of Proposition 
\ref{prop_trop_curves} is given by the following sequence of Lemmas and
takes the rest of 
Section \ref{tropical_curves}.

\begin{lem} \label{lem_descent}
Let $V$ be a vertex of $\Sigma(C)_b$ and let $E$ be an edge of $\Sigma(C)_b$ adjacent to $V$ such that 
$v_{V,E} \cdot u_p>0$. Assume that $E \neq E_0$.
Then there exists a path
$\mathfrak{c}_{V,E}$
in $\Sigma(C)_b$, starting at $V$, of first edge $E$, ending at $V_0$, and whose vertices have images by 
$\Sigma(f)_b$ of strictly decreasing ordinates.
\end{lem}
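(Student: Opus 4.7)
My plan is to construct $\mathfrak{c}_{V,E}$ by a greedy downward walk, starting at $V^{(0)} \coloneqq V$ with initial edge $E^{(0)} \coloneqq E$, and at each step choosing a bounded adjacent edge pointing downward in the picture of $X_0^\trop$. The central observation is that if $V^{(i+1)}$ denotes the endpoint of $E^{(i)}$ opposite to $V^{(i)}$ and $v_{V^{(i)}, E^{(i)}} \cdot u_p > 0$, then $\Sigma(f)_b(V^{(i+1)}) = \Sigma(f)_b(V^{(i)}) + \ell_{E^{(i)}} \, v_{V^{(i)}, E^{(i)}}$ has strictly smaller ordinate than $\Sigma(f)_b(V^{(i)})$. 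This alone yields the strict decrease of ordinates required by the statement.

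The inductive step is where the balancing condition enters. Suppose we have reached $V^{(i+1)}$ along $E^{(i)}$ with $v_{V^{(i)}, E^{(i)}} \cdot u_p > 0$, equivalently $v_{V^{(i+1)}, E^{(i)}} \cdot u_p < 0$. If $V^{(i+1)} = V_0$, the walk terminates; otherwise I claim that some other adjacent bounded edge $E^{(i+1)}$ with $v_{V^{(i+1)}, E^{(i+1)}} \cdot u_p > 0$ exists. First, $V^{(i+1)}$ cannot map to $v_Y$, because the ordinate of $\Sigma(f)_b(V^{(i+1)})$ is strictly less than that of $\Sigma(f)_b(V)$, which is at most the ordinate of $v_Y$, the topmost point of $X_0^\trop$. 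Thus balancing holds at $V^{(i+1)}$, either fully (when $\Sigma(f)_b(V^{(i+1)})$ lies in the interior of an edge or face of $X_0^\trop$) or at least in the vertical direction (when $\Sigma(f)_b(V^{(i+1)}) = v_\PP$). The contracted unbounded edges $E_1,\dots,E_{\beta \cdot D_2}$ contribute zero, and since $V^{(i+1)} \neq V_0$ no $E_0$ is attached. Hence the sum of $v_{V^{(i+1)}, E'} \cdot u_p$ over bounded adjacent edges $E'$ vanishes; since the contribution from $E^{(i)}$ is strictly negative, some other bounded edge $E'$ must satisfy $v_{V^{(i+1)}, E'} \cdot u_p > 0$, and we take $E^{(i+1)}$ to be any such edge.

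For termination, the strict decrease of ordinates ensures no vertex is visited twice, and the finiteness of $\Sigma(C)_b$ then forces the process to stop after finitely many steps; the preceding discussion shows that the only vertex at which it can stop is $V_0$, which produces the desired path. The main obstacle I anticipate is making sure that the weakened balancing at $v_\PP$ is still enough: the horizontal component may fail to balance there, but the argument only consumes the $u_p$-component of balancing, which is the vertical direction, and this is precisely what survives at $v_\PP$. I note in passing that the genus $0$ assumption on $\Sigma(C)_b$ from Proposition \ref{prop_trop_curves} is not logically needed for this descent argument, although it is of course consistent with it.
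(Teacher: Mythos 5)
Your proposal is correct and follows essentially the same route as the paper: a downward walk driven by the vertical component of the balancing condition, which holds away from $v_Y$, together with the observation that every vertex after the first has ordinate strictly below that of $v_Y$ and hence cannot be the unbalanced vertex, so the walk can only terminate at $V_0$. You have merely spelled out the inductive step and the termination argument that the paper leaves implicit (and your side remark that genus $0$ is not needed here is consistent with the paper, which only invokes that hypothesis later, in Lemma 2.7).
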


\begin{proof}
The vertical projection of the balancing condition holds everywhere in $X_0^\trop$
except at $v_Y$.
It follows that, starting with $V$ and $E$ such that $v_{V,E} \cdot u_p >0$,
there exists a path in $\Sigma(C)_b$, starting at $V$, of first edge $E$ and whose vertices have images by $\Sigma(f)_b$ of strictly decreasing ordinates.
Indeed, all the vertices of the path except maybe the first one
have ordinates strictly less that the ordinate of $v_Y$ and so in particular are
distinct of $v_Y$. This path can only end at $V_0$.
\end{proof}

\begin{lem} \label{lem_V_0_lower}
There is no vertex $V$ of $\Sigma(C)_b$
such that $\Sigma(f)_b(V)$ has an ordinate strictly less than the ordinate of 
$\Sigma(f)_b(V_0)$.
\end{lem}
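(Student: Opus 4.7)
The plan is to argue by contradiction using the tropical balancing condition together with the fact that the only vertices of $X_0^\trop$ where balancing may fail are $v_Y$ and $v_{\PP}$, both of which sit at the top of $X_0^\trop$.

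Assume for contradiction that some vertex of $\Sigma(C)_b$ has image of ordinate strictly less than that of $\Sigma(f)_b(V_0)$. Choose a vertex $V^*$ of $\Sigma(C)_b$ achieving the minimum ordinate. Then $\Sigma(f)_b(V^*)$ has ordinate strictly less than $\Sigma(f)_b(V_0)$, hence strictly less than the ordinates of $v_Y$ and $v_{\PP}$. In particular, $\Sigma(f)_b(V^*) \notin \{v_Y, v_{\PP}\}$, so the tropical balancing condition holds at $V^*$ without modification. Moreover, the only unbounded edges of $\Sigma(C)_b$ are $E_0, E_1, \dots, E_{\beta \cdot D_2}$, attached respectively to $V_0$ and to the vertices $V_j$ mapping to $v_{\PP}$; since $V^*$ is distinct from all of these, no unbounded edge of $\Sigma(C)_b$ is adjacent to $V^*$.

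For any edge $E$ adjacent to $V^*$, its other endpoint $V'$ has ordinate $\geqslant$ that of $V^*$ by minimality, so $v_{V^*,E}\cdot u_p\leqslant 0$. Projecting the balancing identity $\sum_{E\ni V^*} v_{V^*,E}=0$ onto $u_p$ gives $\sum_{E\ni V^*} v_{V^*,E}\cdot u_p = 0$, and since each summand is $\leqslant 0$, each in fact vanishes. Thus every edge at $V^*$ is mapped horizontally by $\Sigma(f)_b$, and its other endpoint has the same ordinate as $V^*$, i.e., lies again at the minimum level.

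Let $A$ be the set of vertices of $\Sigma(C)_b$ whose image has minimal ordinate. The argument above applies equally to every vertex in $A$: all adjacent edges connect to vertices in $A$. Hence $A$ is a union of connected components of $\Sigma(C)_b$. Since $\Sigma(C)_b$ is connected and $V^*\in A$, we would conclude $A$ is the entire vertex set, contradicting the fact that $V_0$ lies above the minimum level. The only step that requires care is ruling out unbounded edges at $V^*$ (which would invalidate the balancing identity used above), and this is precisely where the strict inequality between the ordinates of $V^*$ and $V_0$ enters, separating $V^*$ from all three possibilities $V_0, V_1, \dots, V_{\beta\cdot D_2}$.
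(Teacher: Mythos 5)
Your argument is correct and follows essentially the same route as the paper: the vertical projection of the balancing condition forces every edge at a too-low vertex to be horizontal, so the vertices at that level form a union of connected components of $\Sigma(C)_b$, contradicting connectedness. The only difference is cosmetic --- you rule out downward-pointing edges by taking $V^*$ at the global minimum ordinate, whereas the paper rules them out via Lemma \ref{lem_descent} (a downward edge would start a strictly descending path terminating at $V_0$, placing $V_0$ strictly below the chosen vertex).
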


\begin{proof}
Let $V$ be such vertex. We have necessarily $\Sigma(f)_b(V) \neq v_Y$ and so
the vertical projection of the balancing condition applies to $V$. 
It follows from Lemma \ref{lem_descent} that every edge $E$ adjacent to 
$V$ has $v_{V,E}=0$. In particular, every vertex adjacent to $V$ has the same ordinate than $V$ and the previous argument applies to it. It follows that the vertices of 
$\Sigma(C)_b$ whose images by $\Sigma(f)_b$ have ordinates equal to the 
ordinate of $\Sigma(f)_b(V)$ define a subgraph of 
$\Sigma(C)_b$, union of connected components. This contradicts the fact that $\Sigma(C)_b$ is connected.
\end{proof}

\begin{lem} \label{lem_D_2}
Let $V$ be a vertex of $\Sigma(C)_b$ such that 
$\Sigma(f)_b(V) \in e_{D_2}$ and 
$\Sigma(f)_b \neq v_Y$. Let $E$ be an edge 
of $\Sigma(C)_b$ adjacent to $V$. Then 
$E$ is horizontal, that is, 
$v_{V,E}\cdot u_p=0$.
\end{lem}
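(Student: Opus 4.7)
The plan is to combine the vertical projection of the tropical balancing condition at $V$ with a local inspection of the cone complex $X_0^\trop$ near $\Sigma(f)_b(V)$. The hypotheses force $\Sigma(f)_b(V)$ to lie either in the relative interior of $e_{D_2}$ or at the vertex $v_\PP$, so vertical balancing holds at $V$: as recalled in Section \ref{tropical_curves}, the balancing fails only at $v_Y$ and, in the horizontal direction, at $v_\PP$. This gives
\[ \sum_{E} v_{V,E} \cdot u_p \;=\; 0, \]
where the sum ranges over the edges $E$ of $\Sigma(C)_b$ adjacent to $V$.

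Next I would analyze the star of $\Sigma(f)_b(V)$ in $X_0^\trop$. The only cells of $X_0^\trop$ incident to a point of $e_{D_2}$ are the edge $e_{D_2}$ itself and the two $2$-dimensional cells $f_{p_1}$ and $f_{p_2}$; if $\Sigma(f)_b(V) = v_\PP$ one must additionally include the unbounded vertical rays $e_{\PP^1_{p_1}}$ and $e_{\PP^1_{p_2}}$. In the picture of $X_0^\trop$ given in Section \ref{trop_special}, the edge $e_{D_2}$ sits at the top, while $f_{p_1}$, $f_{p_2}$ and the $e_{\PP^1_{p_i}}$ all extend strictly downward. Under the identification of $v_{V,E}$ with an element of $\Z^2$ fixed before the statement, this means that the tangent cone to $X_0^\trop$ at $\Sigma(f)_b(V)$ is contained in the closed half-space $\{v\cdot u_p\geq 0\}$, with equality precisely on directions tangent to $e_{D_2}$.

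Combining the two ingredients concludes the argument: since $v_{V,E}$ must point into the star of $\Sigma(f)_b(V)$ in order for $\Sigma(f)_b$ to map $E$ into $X_0^\trop$, every summand $v_{V,E}\cdot u_p$ is non-negative; yet the total sum vanishes, so each summand is zero, and every edge adjacent to $V$ is horizontal.

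The only step I expect to require any real care is the local description of the star. Because $X_0^\trop$ is an abstract cone complex rather than a subset of $\R^2$ with a single global orientation, I would need to check that the identification of $v_{V,E}$ with an element of $\Z^2$, including the two-valued ambiguity along $e_{D_2}$ noted at the end of Section \ref{tropical_curves}, is compatible with the sign convention $u_p=(-\beta\cdot D_1,0)$, so that ``downward in the picture'' really corresponds to $v\cdot u_p\geq 0$ for both copies of $e_{D_2}$ simultaneously. Once this is pinned down, the non-negativity of every $v_{V,E}\cdot u_p$ follows directly and the rest of the argument is formal.
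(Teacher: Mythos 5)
Your local analysis of the star is correct and is genuinely the easy half of the argument: every cell of $X_0^\trop$ meeting a point of $e_{D_2}$ is either $e_{D_2}$ itself or one of $f_{p_1}$, $f_{p_2}$ (together with $e_{\PP^1_{p_1}}$, $e_{\PP^1_{p_2}}$ when the point is $v_\PP$), and all of these lie on the ``downward'' side, so indeed $v_{V,E}\cdot u_p\geqslant 0$ for every adjacent edge $E$. The gap is in the other half, the identity $\sum_E v_{V,E}\cdot u_p=0$. What balancing actually gives at such a vertex is
\[ \sum_E v_{V,E}\cdot u_p \;=\; (\beta\cdot D_1)\,\bigl(f_*[C_V]\cdot \widetilde{D_1\times\A^1}\bigr), \]
where $C_V$ is the component dual to $V$: the vertical components of the $v_{V,E}$ are the contact orders of the special points of $C_V$ with the divisor $\widetilde{D_1\times\A^1}$, and their sum is the degree of $f^*\cO(\widetilde{D_1\times\A^1})$ on $C_V$, not zero. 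This right-hand side vanishes for contracted components and for fiber components of $\PP\rightarrow D_2$, but it equals $2\delta>0$ if $C_V$ dominates $D_2$ with degree $\delta$, and $2\,[f(C_V)]\cdot[\PP^1]>0$ if $V=v_\PP$ and $C_V\subset\PP$ is, say, a section. These are precisely the components the lemma is meant to exclude (Lemma \ref{lem_no_h_component} is deduced from it), and for them there genuinely \emph{are} adjacent edges with $v_{V,E}\cdot u_p>0$; no purely local computation at $V$ can rule them out. You were misled by reading the sentence in Section \ref{tropical_curves} about vertical balancing as the literal vanishing of the vertical sum; that is not what Proposition 1.15 of \cite{MR3011419} yields at a vertex whose component meets $\widetilde{D_1\times\A^1}$.

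The tell is that your argument never uses the genus-$0$ hypothesis of Proposition \ref{prop_trop_curves}, whereas that hypothesis is exactly what carries the load in the paper's proof. There, a non-horizontal edge at $V$ forces (via the identity above, whose right-hand side is then at least $2$ and is contributed separately at $p_1$ and $p_2$) a second, distinct downward edge at $V$; Lemma \ref{lem_descent} then produces two distinct descending paths from $V$ to $V_0$, and their union is a cycle in $\Sigma(C)_b$, contradicting genus $0$. A degree-one cover of $D_2$ with its two downward legs eventually rejoining at $V_0$ is a perfectly consistent positive-genus tropical configuration, so the statement cannot be proved without the global cycle argument. To repair your proof you would keep your (correct) observation that all adjacent directions are downward or horizontal, replace the false ``sum equals zero'' step by the intersection-theoretic identity above, and then invoke genus $0$ as the paper does to exclude the case where the intersection number is positive.
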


\begin{proof}
If $v_{V,E}\cdot u_p >0$ (resp.\ $<0$), then, by the vertical balancing condition 
(applied through the gluing map of our two copies of $e_{D_2}$), 
there would exist an edge $E'$, adjacent to $V$ such that $v_{V,E'}\cdot u_p <0$
(resp. $>0$). Applying Lemma
\ref{lem_descent} to both $E$ and $E'$, one would get two distinct paths $\mathfrak{c}_{V,E}$ 
and $\mathfrak{c}_{V',E'}$ starting at $V$ and ending at $V_0$. The union 
$\mathfrak{c}_{V,E} \cup \mathfrak{c}_{V,E'}$ would be a non-trivial closed cycle in $\Sigma(C)_b$, 
in contradiction with our assumption that 
$\Sigma(C)_b$ has genus $0$.
\end{proof}

Recall that $\PP$ has a natural fibration structure $\PP \rightarrow D_2$.

\begin{lem} \label{lem_no_h_component}
Let $C'$ be an irreducible component of $C$ such that 
$f(C') \subset \PP$. Then $f(C')$ is contained in a fiber of $\PP \rightarrow D_2$.
\end{lem}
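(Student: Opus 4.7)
The plan is to use the vertical balancing condition at $v_{\PP}$, which holds by the remark in Section~\ref{trop_special}, together with the simple observation that no cone of $X_0^{\trop}$ adjacent to $v_{\PP}$ extends upward from it.

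First I would dispose of the degenerate subcases. If $f(C') \subset \PP^1_{p_i}$ for some $i \in \{1,2\}$, then $f(C')$ is already contained in a fiber and we are done. The case $f(C') \subset D_{2,0}$ is excluded by the hypothesis $f(C') \subset \PP$ in the intended sense, since $D_{2,0}$ is the double curve $Y \cap \PP$ and any component contracted there is simultaneously contained in $Y$. So I may assume that the vertex $V$ of $\Sigma(C)_b$ dual to $C'$ satisfies $\Sigma(f)_b(V) = v_{\PP}$.

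Next I would enumerate the cones of $X_0^{\trop}$ adjacent to $v_{\PP}$: the vertex $v_{\PP}$ itself, the bounded edge $e_{D_2}$, the unbounded edges $e_{\PP^1_{p_1}}$ and $e_{\PP^1_{p_2}}$, and the two-dimensional faces $f_{p_1}, f_{p_2}$. Inspecting the picture of $X_0^{\trop}$ in Section~\ref{trop_special}, each of these cones lies in the closed half-plane $\{v : v \cdot u_p \geq 0\}$; equivalently, no cone at $v_{\PP}$ extends upward. Hence for every edge $E$ of $\Sigma(C)_b$ adjacent to $V$ one has $v_{V,E} \cdot u_p \geq 0$. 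The vertical balancing condition at $v_{\PP}$ reads $\sum_E v_{V,E} \cdot u_p = 0$, and since every summand is non-negative, each one vanishes. In particular, no edge at $V$ has a downward component, so $C'$ has no marked point or node mapping to $\PP^1_{p_1}$ or $\PP^1_{p_2}$, and $V_0 \neq V$.

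It follows that $f(C')$ meets neither $\PP^1_{p_1}$ nor $\PP^1_{p_2}$, so $f_{\ast}[C'] \cdot [\PP^1_{p_i}] = 0$ in $H_2(\PP,\Z)$ for $i=1,2$. Since $[\PP^1_{p_i}]$ equals the fiber class of $\PP \to D_2$, the composition $C' \to \PP \to D_2$ has degree zero and is therefore constant, so $f(C')$ is contained in a single fiber of $\PP \to D_2$. The main obstacle is correctly identifying the cones of $X_0^{\trop}$ incident to $v_{\PP}$ and confirming that none extend upward; once this upward-free local picture is in hand, the balancing and intersection computation is immediate.
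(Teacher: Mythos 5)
Your argument hinges on reading the remark of Section~\ref{trop_special} as the unconditional identity $\sum_E v_{V,E}\cdot u_p=0$ at a vertex $V$ lying over $v_{\PP}$, and this is where the proof breaks down. The balancing condition of Proposition~1.15 of \cite{MR3011419} only constrains $\sum_E u_{V,E}$ in the directions dual to the divisors containing the stratum into which $C'$ maps; for a component generically in the open stratum of $\PP$ this is the horizontal direction of $e_{D_2}$ alone. In the vertical direction, which is dual to the divisor $\widetilde{D_1 \times \A^1}$ (whose trace on $\PP$ is $\PP^1_{p_1}\cup\PP^1_{p_2}$), the sum $\sum_E v_{V,E}$ is not zero but equals $f_*[C']\cdot(\PP^1_{p_1}+\PP^1_{p_2})$ in the downward direction: every point of $C'$ mapping into $\PP^1_{p_1}\cup\PP^1_{p_2}$ must be a special point carrying a positive contact order, hence a downward edge of that weight, and no local constraint forces a compensating upward edge --- consistently with your own (correct) observation that no cone of $X_0^{\trop}$ extends upward from $v_{\PP}$. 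So the quantity you need to vanish is exactly the intersection number whose vanishing is the conclusion of the lemma; the argument is circular. A concrete test: a component mapping isomorphically onto $D_{2,\infty}$ has two downward edges of weight one, no horizontal or upward edges, and violates no local balancing statement.

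The tell-tale symptom is that your proof never uses the hypothesis that $\Sigma(C)_b$ has genus $0$, which is essential here (and is the reason Lemma~\ref{lem_vanishing} is needed to dispose of positive-genus dual graphs). The exclusion of non-fiber components is global, not local: if $f(C')$ is not contained in a fiber it meets both $\PP^1_{p_1}$ and $\PP^1_{p_2}$, producing two distinct downward edges at $V$; each starts a strictly descending path ending at $V_0$ by Lemma~\ref{lem_descent}, and the union of two such paths contains a cycle, contradicting genus $0$. This is the content of Lemma~\ref{lem_D_2}, which the paper's proof invokes. Separately, your dismissal of the case $f(C')\subset D_{2,0}$ is not legitimate: $D_{2,0}$ is a section of $\PP\rightarrow D_2$, not a fiber, so a component dominating $D_{2,0}$ is precisely one of the configurations the lemma must rule out (the proof of Lemma~\ref{lem_homology} relies on no component being mapped onto $D_2$), and it requires the same cycle argument applied to a vertex of $e_{D_2}$.
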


\begin{proof}
Let $V$ be the vertex of $\Sigma(C)_b$
dual to $C'$. We have $\Sigma(f)_b(V) \in e_{D_2}$ and $\Sigma(f)_b(V)\neq v_Y$.
Assume by contradiction that
$f(C')$ is not contained in a fiber of $\PP \rightarrow D_2$. Then
$f(C')$ intersects
$\PP^1_{p_1}$ and $\PP^1_{p_2}$. So there are edges $E_{p_1}$ and $E_{p_2}$
adjacent to $V$ such that $\Sigma(f)_b(E_{p_1})$ goes inside $f_{p_1}$ with $v_{V,E_{p_1}}\cdot u_p>0$ and
similarly $\Sigma(f)_b(E_{p_2})$ goes inside $f_{p_2}$ with $v_{V,E_{p_2}}\cdot u_p>0$.
This contradicts Lemma \ref{lem_D_2}.
\end{proof}

Let $\Gamma_\PP$ be the subgraph of 
$\Sigma(C)_b$ consisting of vertices mapped by 
$\Sigma(f)_b$ on $e_{\PP^1_{p_1}} \cup e_{\PP^1_{p_2}}$.
Let $\tau$ be a connected component of 
$\Gamma_\PP$ and let $C_{\tau}$ be the curve dual to $\Gamma_\tau$. By Lemma
\ref{lem_no_h_component}, 
$f(C_\tau)$ is contained in a fiber of 
$\PP \rightarrow D_2$. We can write 
$[f(C_\tau)]=d_\tau [\PP^1]
\in H_2(\PP,\Z)$, for some $d_\tau \in \NN$, where $[\PP^1]$ is the class of a fiber of $\PP \rightarrow D_2$. 
As $C$ is connected, $C_\tau$ cannot be entirely contracted by $f$, and so $d_\tau \geqslant 1$. 
In fact, $d_{\tau}$ is the sum of weights of edges of $\Sigma(C)_b$
adjacent to $\tau$ and mapped non-trivially in $e_{D_2}$.

For all $1 \leqslant j \leqslant \beta \cdot D_2$, let $\tau_j$ be the connected component of $\Gamma_\PP$ containing 
$V_j$. As the points 
$\tilde{\sigma}_1(0),
\dots, \tilde{\sigma}_{\beta \cdot D_2}(0)$ 
are in different fibers of 
$\PP \rightarrow D_2$, we have 
$\tau_j \neq \tau_k$ if $j \neq k$.
In particular, we have $V_j \neq V_k$
if $j \neq k$.

Let $\cV_Y$ be the set of vertices $V$ of
$\Sigma(C)_b$ such that 
$\Sigma(f)_b(V)=v_Y$, and which are connected to
some vertex in some connected component 
of $\Gamma_\PP$ by some path mapped
in $e_{D_2}$ by $\Sigma(f)_b$.
Let $C_Y$ be the union of irreducible components of $C$ dual to vertices 
in $\cV_Y$. Denote 
$\beta_Y \coloneqq [f(C_Y)]
\in H_2(Y,\Z)$.

\begin{lem} \label{lem_homology}
We have:
\begin{itemize}
    \item $\beta_Y \cdot D_2=\beta \cdot D_2$.
    \item The connected components of $\Gamma_\PP$ are exactly the $\tau_j$, 
$1 \leqslant j \leqslant \beta \cdot D_2$. In particular, no vertex of $\Sigma(C)_b$ is mapped by $\Sigma(f)_b$
in the interior of $e_{\PP^1_{p_1}}$ or the 
interior of $e_{\PP^1_{p_2}}$.
    \item $d_{\tau_j}=1$ for every 
$1 \leqslant j \leqslant \beta \cdot D_2$.
\end{itemize}
\end{lem}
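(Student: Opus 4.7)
The plan is to establish the three bullets together by combining the tropical lemmas already proved (Lemmas \ref{lem_descent}, \ref{lem_V_0_lower}, \ref{lem_D_2}, \ref{lem_no_h_component}) with a class/intersection computation on the target.

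First, I would establish that every connected component of $\Gamma_\PP$ is a single vertex at $v_\PP$. Applying Lemma \ref{lem_D_2} at any vertex mapped to $v_\PP$ shows all adjacent edges are horizontal, ruling out edges going from $v_\PP$ down into the interior of $e_{\PP^1_{p_i}}$; hence each $v_\PP$-vertex is isolated in $\Gamma_\PP$. The remaining task is to rule out components $\tau'$ of $\Gamma_\PP$ lying entirely in the interior of $e_{\PP^1_{p_i}}$. Since the stratum $\PP^1_{p_i}$ is toric, full balancing holds at vertices of $\tau'$, and connectedness of $C$ combined with Lemma \ref{lem_D_2} forces $\tau'$ to attach upward via some edge to a vertex in $\{v_Y\} \cup \mathrm{int}\, e_{D_1}$. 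Balancing at that attachment vertex then produces a downward edge, so Lemma \ref{lem_descent} and Lemma \ref{lem_V_0_lower} would place $V_0$ inside the interior of $e_{\PP^1_{p_i}}$. Using the pushforward identity $\pi_* f_*[C] = \beta \in H_2(Y)$ (where $\pi \colon X_0 \rightarrow Y$ contracts $\PP$ to $D_2$) together with the genus-$0$ assumption, one derives a contradiction with the flux of weights at infinity along $e_{\PP^1_{p_i}}$ forced by the single contact order $u_p$ of $E_0$.

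Second, with $\Gamma_\PP$ reduced to isolated $v_\PP$-vertices, each dual component $C_\tau$ sits in a fiber of $\PP \rightarrow D_2$ of degree $d_\tau$ (Lemma \ref{lem_no_h_component}) and emits total horizontal weight $d_\tau$ into $e_{D_2}$, corresponding to its intersection with $D_{2,0}$. Propagating this weight across $e_{D_2}$ (Lemma \ref{lem_D_2} enforces horizontality at intermediate interior $e_{D_2}$ vertices) and pushing forward to $Y$ and intersecting with $D_2$, I would obtain
\[
\sum_\tau d_\tau = \beta \cdot D_2.
\]
Combined with $d_{\tau_j} \ge 1$ and the existence of $\beta \cdot D_2$ distinct $\tau_j$ (the marked points lie in distinct fibers of $\PP \rightarrow D_2$), this pincers into equalities, giving $d_{\tau_j} = 1$ for all $j$ and the absence of any extra components of $\Gamma_\PP$; this yields the second and third bullets.

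Third, for the first bullet, every $v_\PP$-vertex is then some $V_j$ and is connected by a weight-$1$ horizontal chain in $e_{D_2}$ to a $v_Y$-vertex, which by definition lies in $\cV_Y$. Hence all the intersection of the $v_Y$-components with $D_2$ is accounted for by $\beta_Y$, giving $\beta_Y \cdot D_2 = \beta \cdot D_2$. The main obstacle is the first step: ruling out connected components of $\Gamma_\PP$ in the interior of $e_{\PP^1_{p_i}}$. Tropical balancing and the descent/lowness lemmas alone permit such configurations, with $V_0$ itself placed inside $e_{\PP^1_{p_i}}$; the genuine contradiction must come from a careful intersection-theoretic argument combining the global class identity $\pi_*f_*[C]=\beta$ with the constraints imposed by the prescribed contact order $u_p$ at the single unbounded edge $E_0$, exactly the kind of ``homological and intersection numerical'' input highlighted in the technical summary of the paper.
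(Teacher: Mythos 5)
Your steps 2 and 3 reproduce, in the restricted setting, exactly the counting the paper uses; the problem is your step 1, which you yourself flag as the main obstacle and never actually carry out: the ``careful intersection-theoretic argument'' ruling out connected components of $\Gamma_\PP$ with vertices in the interior of $e_{\PP^1_{p_1}}$ or $e_{\PP^1_{p_2}}$ is only gestured at. The partial reasoning you give there does not hold up either: an edge leaving such a component need not attach to a vertex in $\{v_Y\}\cup \mathrm{int}\,e_{D_1}$, since a priori it can end at a vertex mapped to the interior of the face $f_{p_i}$ (vertices in the interiors of the faces are only excluded by Lemma \ref{lem_no_f}, whose proof uses Lemma \ref{lem_homology}, so invoking that here would be circular), and Lemmas \ref{lem_descent} and \ref{lem_V_0_lower} do not ``place $V_0$ inside the interior of $e_{\PP^1_{p_i}}$'' --- Lemma \ref{lem_V_0_lower} only says that no vertex lies strictly below $V_0$. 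So, as written, the proposal has a genuine gap at its central step.

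The paper's proof shows this separate exclusion is unnecessary: the pincer is run over \emph{all} connected components $\tau$ of $\Gamma_\PP$ at once. Every $\tau$ --- whether its vertices sit at $v_\PP$ or in the interior of $e_{\PP^1_{p_i}}$ --- has dual curve $C_\tau$ contained in a fiber of $\PP\rightarrow D_2$ (Lemma \ref{lem_no_h_component}) and not entirely contracted, hence $d_\tau\geqslant 1$ and $f(C_\tau)$ meets $D_{2,0}$ with multiplicity $d_\tau$, which must be matched by components of $C$ mapping into $Y$; since no component maps onto $D_2$, this gives the upper bound side, while the $\beta\cdot D_2$ pairwise distinct components $\tau_j$ give the lower bound, so that
\[
\beta \cdot D_2 \geqslant \beta_Y\cdot D_2 \geqslant \sum_{\tau} d_\tau \geqslant \sum_{j=1}^{\beta\cdot D_2} d_{\tau_j} \geqslant \beta \cdot D_2
\]
and all inequalities are equalities. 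The equality $\sum_\tau d_\tau=\sum_j d_{\tau_j}$ is precisely what kills any extra component of $\Gamma_\PP$, interior ones included, and it simultaneously yields $d_{\tau_j}=1$ and $\beta_Y\cdot D_2=\beta\cdot D_2$. You already have this counting in your step 2 (your push-forward identity $\sum_\tau d_\tau=\beta\cdot D_2$ is the combination of the two outer inequalities); had you applied it to all components of $\Gamma_\PP$ rather than only to those at $v_\PP$, your step 1 would have been superfluous and the lemma would follow.
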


\begin{proof}
It follows from Lemma \ref{lem_D_2}
and from the horizontal balancing condition, satisfied on 
$e_{D_2}$ away from $v_Y$ and $v_\PP$, that 
\[\beta_Y \cdot D_2 \geqslant \sum_{\tau} d_\tau \,,\]
where the sum is over the connected components of $\Gamma_\PP$.

By Lemma \ref{lem_no_h_component}, no irreducible component of $C$ is mapped onto $D_2$,
so the image of every irreducible
irreducible component of $C$ intersects 
$D_2$ non-negatively and so
$\beta \cdot D_2 \geqslant \beta_Y\cdot D_2$. On the other hand, we have 
$\sum_{\tau} d_\tau
\geqslant \sum_{j=1}^{\beta \cdot D_2}
d_{\tau_j} \geqslant \beta \cdot D_2$.

The combination of the previous inequalities gives 
\[\beta \cdot D_2 \geqslant \beta_Y\cdot D_2
\geqslant
\sum_{\tau} d_\tau
\geqslant \sum_{j=1}^{\beta \cdot D_2}
d_{\tau_j} \geqslant \beta \cdot D_2\,,\]
and so all these inequalities are in fact equalities. Lemma 
\ref{lem_homology} follows.
\end{proof}

\begin{lem} \label{lem_no_D_1}
Let $C'$ be an irreducible component of $C$ such that $f(C') \subset Y$ and $C'$ is not contracted by $f$. Then 
\[ (f(C') \cap D_2) \subset \{ \PP^1_{\tilde{\sigma}_j(0)} \cap D_2, 
1 \leqslant j \leqslant \beta \cdot D_2\}\,.\]
\end{lem}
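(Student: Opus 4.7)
The plan is to analyze the tree $\Sigma(C)_b$ near the vertex $V$ dual to $C'$. Since $f(C') \subset Y$ is non-contracted, we have $\Sigma(f)_b(V)=v_Y$ (the case $f(C')=D_2$ being implicitly excluded by the statement). Given $p \in f(C') \cap D_2$, there is a node $n$ of $C$ lying on $C'$ with $f(n)=p$, dual to an edge $E$ at $V$ that is mapped non-trivially into $e_{D_2}$; its other endpoint $V'$ lies either at $v_\PP$ or in the interior of $e_{D_2}$. I would follow the tree path starting at $V$ through $E$, show it terminates at some $\tau_j$, and verify that $p$ necessarily equals $\PP^1_{\tilde{\sigma}_j(0)} \cap D_2$.

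The key intermediate step is to show that every component $C^*$ of $C$ whose dual vertex $V''$ lies in the interior of $e_{D_2}$ must be mapped constantly to $D_2$. Assume for contradiction that $f|_{C^*}$ is non-constant; then $C^*$ being complete and $D_2 \simeq \PP^1$, the map would be surjective, so $f(C^*)=D_2$ would contain the corner points $p_1, p_2 \in D_1 \cap D_2$. Any preimage of such a $p_k$ in $C^*$ must be a special point of the log curve $C$, because a smooth non-special point of $C$ carries trivial log structure and cannot be sent by a log morphism to the non-trivial boundary corner at $p_k$. The corresponding edge of $\Sigma(C)_b$ at $V''$ would then have to enter the face $f_{p_k}$, yielding a slope with $v_{V'',E}\cdot u_p \neq 0$, in direct contradiction with Lemma \ref{lem_D_2}.

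With constancy of intermediate components established, the $f$-image is the same at every node along any horizontal chain inside $e_{D_2}$. Following the chain starting at $V$ through $E$, the genus-zero tree structure of $\Sigma(C)_b$ together with Lemma \ref{lem_homology} (whose conclusions identify the $\tau_j$ as the only connected components of $\Gamma_\PP$ and force $d_{\tau_j}=1$) ensure that the chain terminates at some $\tau_j$ at $v_\PP$. By Lemma \ref{lem_no_h_component} combined with $V_j \in \tau_j$ being the vertex of the marked point mapped to $\tilde{\sigma}_j(0)$, every component of $\tau_j$ lies in the fiber $\PP^1_{\tilde{\sigma}_j(0)}$, and the common $f$-image along the chain equals $\PP^1_{\tilde{\sigma}_j(0)} \cap D_2$, giving $p = \PP^1_{\tilde{\sigma}_j(0)} \cap D_2$. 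Distinctness of the $\tilde{\sigma}_j(0)$ moreover prevents two distinct $\tau_j$'s from sharing an intermediate constant component, since that would force the associated fibers to meet $D_2$ at the same point. The main technical obstacle is precisely the exclusion of non-constant maps to $D_2$; the tropical argument above reduces this to the horizontality guaranteed by Lemma \ref{lem_D_2}.
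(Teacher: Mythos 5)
Your chain-following argument works for intersection points of $f(C')$ with $D_2$ lying away from the two corner points $p_1,p_2=D_1\cap D_2$, but it silently assumes this is the only case, and that is a genuine gap. If $p\in f(C')\cap D_2$ equals $p_1$ or $p_2$, the special point of $C$ over $p$ (it is indeed special, by locality of the maps on ghost sheaves) is dual to an edge of $\Sigma(C)_b$ whose interior is mapped into the two-dimensional face $f_{p_k}$, \emph{not} into $e_{D_2}$: so Lemma \ref{lem_D_2} (horizontality) never applies, your horizontal chain inside $e_{D_2}$ never starts, and the other endpoint of that edge is a vertex in the open face $f_{p_k}$, i.e.\ a component contracted to the corner. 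Nothing among the tools you allow yourself (Lemmas \ref{lem_descent}, \ref{lem_V_0_lower}, \ref{lem_D_2}, \ref{lem_no_h_component}, \ref{lem_homology}) rules this configuration out, and you cannot appeal to Lemma \ref{lem_no_f}, since in the paper that lemma is deduced \emph{from} Lemma \ref{lem_no_D_1}; using it here would be circular. This corner case is not a technicality: it is exactly the possibility that the degenerate stable log maps acquire tangency along $\PP^1_{p_1}\cup\PP^1_{p_2}$ rather than only along $D_1$, which is the main point the section has to control. Relatedly, your opening parenthetical that ``the case $f(C')=D_2$ is implicitly excluded by the statement'' is not correct — that case (and likewise $f(C')=D_1$, which passes through the corners) must be excluded, the former by Lemma \ref{lem_no_h_component}.

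The paper closes all cases at once by a short intersection-number budget argument rather than by following the tree: since no component of $C$ maps onto $D_2$ (Lemma \ref{lem_no_h_component}), every local contribution of a non-contracted $Y$-component to the intersection with $D_2$ is non-negative, and the equalities forced in the proof of Lemma \ref{lem_homology} show that the total budget $\beta\cdot D_2$ is already exhausted by the attachment points $\PP^1_{\tilde{\sigma}_j(0)}\cap D_2$ of the $\beta\cdot D_2$ components $\tau_j$ (each with $d_{\tau_j}=1$); hence there is no room for any other intersection point, corners included. If you want to keep your local approach you would need a separate argument for the corner points, which in effect reproduces this accounting. Two smaller repairs: a smooth non-special point of $C$ does not carry the trivial log structure (it carries the pullback from the base); the correct statement is that the interior of each stratum of $C$ must map to a single stratum of $X_0$, by locality of the ghost-sheaf maps. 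And the termination of your chain at some $\tau_j$ is not a consequence of the genus-zero tree structure alone; it follows from the horizontal balancing condition at the (contracted) vertices in the interior of $e_{D_2}$, which forces a monotone path toward $v_{\PP}$.
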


\begin{proof} 
By Lemma \ref{lem_no_h_component}, no irreducible component of $C$ is mapped onto $D_2$,
so the image of every irreducible
irreducible component of $C$ intersects 
$D_2$ non-negatively.
But according to Lemma
\ref{lem_homology}, the total intersection number $\beta \cdot D_2$ of images by $f$ components of $C$ mapping to $Y$ with $D_2$
is already accounted by 
the intersection points $\PP_{\tilde{\sigma}_j(0)}^1 \cap D_2, 
1 \leqslant j \leqslant \beta \cdot D_2$.
\end{proof}

\begin{lem} \label{lem_no_f}
There is no vertex $V$ of $\Sigma(C)_b$
such that $\Sigma(f)_b(V)$ is contained in the interior of $f_{p_1}$ or the interior of $f_{p_2}$.
\end{lem}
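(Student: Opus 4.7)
The plan is to derive a contradiction by showing that the existence of such a vertex $V$ forces all of $C$ to be contracted to a single point. A vertex $V$ with $\Sigma(f)_b(V)$ in the interior of the $2$-dimensional face $f_{p_i}$ corresponds, via the standard dictionary between tropical and log geometry, to an irreducible component $C_V$ of $C$ contracted by $f$ to the corner point $p_i = D_1 \cap \PP^1_{p_i}$ of $X_0$.

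The key step is the following propagation claim: if a component $C_V$ of $C$ is contracted to $p_i$, then every component $C'$ of $C$ sharing a node with $C_V$ is also contracted to $p_i$. The shared node maps to $p_i$, so $p_i \in f(C')$, and I rule out $C'$ being non-contracted in two cases. If $f(C') \subset Y$, then $p_i \in f(C') \cap D_2$; but by our choice of the sections $\sigma_j$, the point $p_i$ is not among the $\PP^1_{\tilde{\sigma}_j(0)} \cap D_2$, contradicting Lemma~\ref{lem_no_D_1}. If $f(C') \subset \PP$, then Lemma~\ref{lem_no_h_component} forces $f(C')$ to lie in a single fiber of $\PP \to D_2$; since this fiber passes through $p_i$ it must be $\PP^1_{p_i}$, so the dual vertex of $C'$ would lie in the interior of $e_{\PP^1_{p_i}}$, contradicting Lemma~\ref{lem_homology}. (The borderline possibility $f(C') = D_2$ is excluded by either of the same two lemmas.) Hence $C'$ is contracted, and since it shares a node with $C_V$ mapped to $p_i$, its image is precisely $p_i$.

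Iterating this propagation along the connected curve $C$ — noting that the argument only uses that the starting component is contracted to $p_i$, not its particular tropical position — every irreducible component of $C$ is contracted to $p_i$. Therefore $f(C) = \{p_i\}$, contradicting $[f(C)] = \beta$ with $\beta \cdot D_1 > 0$.

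The hard part, and the reason the argument is non-trivial, is precisely the point emphasized in the introduction: in this degeneration $Y$ and $\PP$ are not toric, so purely tropical/balancing arguments cannot rule out a vertex $V$ in $f_{p_i}$ by themselves. One must genuinely descend to the level of stable log maps and use the numerical/intersection-theoretic input packaged in Lemmas~\ref{lem_no_D_1} and \ref{lem_homology}. Once these are available, the propagation step is a clean induction on the tree $\Sigma(C)_b$, and no further log-geometric machinery is needed.
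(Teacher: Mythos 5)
Your proof is correct and follows essentially the same route as the paper's: the paper deduces from Lemma~\ref{lem_homology} and Lemma~\ref{lem_no_D_1} that no non-contracted component of $C$ has image meeting $p_1$ or $p_2$, and then uses connectedness of $C$ to exclude components contracted onto these points --- exactly your propagation argument. The only difference is presentational: you spell out the propagation as an induction along the dual tree and make explicit the use of Lemma~\ref{lem_no_h_component} on the $\PP$-side, which the paper leaves implicit.
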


\begin{proof}
It follows from Lemma \ref{lem_homology} and Lemma
\ref{lem_no_D_1} that no non-contracted component of $C$
has an image by $f$ intersecting $p_1$ or $p_2$.
As $C$ is connected, this implies that no component of $C$
can be contracted by $f$ on $p_1$ or $p_2$.
\end{proof}

The first two points of Proposition 
\ref{prop_trop_curves} follow from 
Lemma \ref{lem_homology}, Lemma \ref{lem_no_D_1}  and
Lemma \ref{lem_no_f}.
In particular, we have 
$\Sigma(f)_b(V_0) \in e_{D_1} \cup e_{D_2}$. As $\beta \cdot D_1 > 0$, there is at least one non-contracted component of
$C$ mapping to $Y$ and intersecting 
$D_1$, and so there is at least one vertex $V$ and an edge $E$ such that 
$v_{V,E}\cdot u_p>0$. Using Lemma 
\ref{lem_V_0_lower}, it follows that 
$\Sigma(f)_b(V_0) \in e_{D_2}$ is 
impossible, and so $\Sigma(f)_b(V_0) \in e_{D_1}$, proving the third point of Proposition \ref{prop_trop_curves}.

\subsection{Decomposition formula} \label{decomp_formula}

We refer to Definitions 4.2.1 and 4.3.1 of \cite{abramovich2017decomposition} 
for the notions of decorated parametrized tropical curve and rigid decorated 
parametrized tropical curves. We say that a decorated parametrized tropical curve 
$h \colon \Gamma \rightarrow X_0^\trop$ is of type $u_p$ if 
\begin{itemize}
    \item The sum over vertices of curve classes decorations is equal to $\beta$.
    \item $\Gamma$ contains unbounded edges $E_1,\dots, E_{\beta \cdot D_2}$ contracted 
    by $h$ onto $v_{\PP}$.
    \item $\Gamma$ contains an unbounded edge $E_0$ mapped by $h$ with weight 
    $\beta \cdot D_1$ onto an unbounded ray in $X_0^\trop$, going down and parallel to 
    $e_{D_1}$.
    \item $E_0$ and $E_1,\dots, E_{\beta \cdot D_2}$ are the only unbounded edges of $\Gamma$.
\end{itemize}

Let $h \colon \Gamma \rightarrow X_0^\trop$ be a rigid genus $g$ 
decorated parametrized tropical curve of type $u_p$.
Let $\overline{M}_g^h(X_0,\beta)$ be the moduli space of genus $g$ stable log maps of type $u_p$ marked by $h$, that is, equipped of a retraction of their tropicalization onto $h$.
According to Proposition 4.4.2 of \cite{abramovich2017decomposition}, it is a proper Deligne-Mumford stack, equipped with a virtual fundamental class 
$[\overline{M}_g^h(X_0,\beta)]^{\virt}$.
The $\beta \cdot D_2$ marked points define an evaluation morphism
\[\ev \colon \overline{M}_g^h(X_0,\beta) \rightarrow (X_0)^{\beta \cdot D_2} \,.  \]
Denote 
\[ \iota_{\tilde{\sigma}}^h \colon 
\tilde{\sigma}(0)=\{
(\tilde{\sigma}_1(0),\dots,\tilde{\sigma}_{\beta \cdot D_2}(0) ) \} \hookrightarrow (X_0)^{\beta \cdot D_2} \,,\]
\[\overline{M}_g^h(X_0,\beta,\tilde{\sigma}) \coloneqq \overline{M}_g^h(X_0,\beta) \times_{(X_0)^{\beta \cdot D_2}} 
\tilde{\sigma}(0) \,,\]
and 
\[[\overline{M}_g^h(X_0,\beta,\tilde{\sigma})]^{\virt} \coloneqq \iota_{\tilde{\sigma}}^!
[\overline{M}_g^h(X_0,\beta)]^{\virt} 
\in A_g(\overline{M}_g^h(X_0,\beta,\tilde{\sigma}),\Q)\,.\]

Forgetting the marking by $h$ gives a morphism 
\[j_h \colon \overline{M}_g^h(X_0,\beta,\tilde{\sigma}) \rightarrow \overline{M}_g(X_0,\beta,\tilde{\sigma})\,.\]
According to the decomposition formula, Theorem 4.8.1 of 
\cite{abramovich2017decomposition}, we have 
\[ [\overline{M}_g(X_0,\beta,\tilde{\sigma})]^\virt
= \sum_{h}  
\frac{n_h}{|\Aut (h)|}
(j_{h })_* [\overline{M}_g^{h}(X_0,\beta,\tilde{\sigma})]^\virt \,,\]
where the sum is over 
rigid genus $g$ decorated parametrized 
tropical curves 
$h \colon \Gamma \rightarrow 
X_0^\trop$
of type $u_p$, $n_h$ is the smallest positive integer such that after scaling by $n_h$, 
$h$ gets integral vertices and integral lengths, and $|\Aut(h)|$ is the order of the automorphism group of $h$. 

\subsection{Contributing rigid tropical curves} \label{rigid_tropical}

We first explain how to construct a particular class of decorated parametrized tropical curves.
Let $\vec{g}=(g_0,g_1,\dots,g_{\beta \cdot D_2})$ be a $(\beta \cdot D_2+1)$-tuple of non-negative integers such that $|\vec{g}| \coloneqq g_0 + \sum_{j=1}^{\beta \cdot D_2}
g_j = g$.
Let $\Gamma_{\vec{g}}$ be the genus $0$ graph consisting of vertices $V_0$ and $V_j$, $1 \leqslant j \leqslant 
\beta \cdot D_2$, bounded edges 
$E_j^{D_2}$,
$1\leqslant j \leqslant \beta \cdot D_2$,
connecting $V_0$ and $V_j$, 
 unbounded edges $E_j$, $1\leqslant j \leqslant \beta \cdot D_2$, attached to $V_j$
and one unbounded edge $E_0$ attached to $V_0$.
We define a structure of tropical curve on 
$\Gamma_{\vec{g}}$ by assigning:
\begin{itemize}
    \item Genera to the vertices. We assign 
    $g_0$ to $V_0$ and $g_j$ to $V_j$.
    \item The length $\ell(E_j^{D_2})=1$ to the bounded 
    edge $E_j^{D_2}$, for all $1 \leqslant j \leqslant 
    \beta \cdot D_2$.
\end{itemize}
Finally, we define a decorated parametrized 
tropical curve 
\[h_{\vec{g}} \colon \Gamma_{\vec{g}} \rightarrow X_0^\trop\]
by the following data:
\begin{itemize}
    \item We define $h_{\vec{g}}(V_0) \coloneqq v_Y$ and 
    $h_{\vec{g}}(V_j)=v_{\PP}$ for all $1 \leqslant j \leqslant \beta \cdot D_2$.
    \item For all $1 \leqslant j \leqslant \beta \cdot D_2$, the bounded edge $E_j^{D_2}$ is mapped by $h_{\vec{g}}$ to the bounded edge $e_{D_2}$ with weight $1$.
    \item For all $1 \leqslant j \leqslant \beta \cdot D_2$, the unbounded edge $E_j$ is contracted by $h_{\vec{g}}$ to the vertex $v_{\PP}$.
    \item The unbounded edge $E_0$ is mapped by $h$ to the unbounded edge $e_{D_1}$ with weight $\beta \cdot D_1$.
    \item We decorate $V_0$ with the curve class $\beta \in H_2(Y,\Z)$.
    \item For all $1 \leqslant j \leqslant \beta \cdot D_2$, we decorate the vertex $V_j$ with the curve class 
    $[\PP^1]$, fiber class of $\PP \rightarrow D_2$.
\end{itemize}

Figure: the tropical curve $\Gamma_{\vec{g}}$.
\begin{center}
\setlength{\unitlength}{1cm}
\begin{picture}(10,4)
\thicklines
\put(3,2){\circle*{0.1}}
\put(5,2){\circle*{0.1}}
\put(3,2){\line(-1,0){2}}
\put(3,2){\line(1,0){2}}
\put(5,2){\line(1,0){2}}
\put(5,1){\circle*{0.1}}
\put(5,1){\line(1,0){2}}
\put(5,3){\circle*{0.1}}
\put(5,3){\line(1,0){2}}
\put(3,2){\line(2,1){2}}
\put(3,2){\line(2,-1){2}}
\put(1,2.2){$E_0$}
\put(2.8,2.2){$V_0$}
\put(5,3.2){$V_1$}
\put(5,2.2){$V_j$}
\put(5,1.2){$V_{\beta \cdot D_2}$}
\put(6.8,1.2){$E_{\beta \cdot D_2}$}
\put(6.8,3.2){$E_1$}
\put(6.8,2.2){$E_j$}
\put(3.8,1){$E_{\beta \cdot D_2}^{D_2}$}
\put(3.8,2.1){$E_j^{D_2}$}
\put(3.8,2.8){$E_1^{D_2}$}
\end{picture}
\end{center}

\begin{lem}
For every $\vec{g}$, the genus $g$ decorated parametrized tropical curve $h_{\vec{g}} \colon \Gamma_{\vec{g}} \rightarrow X_0^\trop$ is rigid and of type $u_p$.
\end{lem}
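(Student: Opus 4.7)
The plan is to verify the two clauses — ``of type $u_p$'' and rigidity — separately, each by an immediate unpacking of the relevant definition from Section~4 of \cite{abramovich2017decomposition} against the combinatorial data built in Section~\ref{rigid_tropical}.

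For ``of type $u_p$'', I would run through the four bullets recalled at the start of Section~\ref{decomp_formula}. The last three are built into the definition of $(\Gamma_{\vec g}, h_{\vec g})$: the only unbounded edges of $\Gamma_{\vec g}$ are $E_0, E_1, \dots, E_{\beta \cdot D_2}$, each $E_j$ with $j \geqslant 1$ is contracted to $v_\PP$, and $E_0$ is sent with weight $\beta \cdot D_1$ along an unbounded ray parallel to $e_{D_1}$ going down. The substance is in the first bullet: the class $\beta \in H_2(Y, \Z)$ at $V_0$ together with the $\beta \cdot D_2$ fiber classes $[\PP^1] \in H_2(\PP, \Z)$ at $V_1, \dots, V_{\beta \cdot D_2}$ pushforward to a class on $X_0 = Y \cup \PP$ whose total intersection numbers with $D_1$ and $D_2$ (read on $\pi^{-1}(t)$) agree with those of $\beta$, so it is the flat limit of $\beta$ in the degeneration $\pi \colon X \to \A^1$. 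The genus is also an elementary check: $\Gamma_{\vec g}$ is a tree (central vertex $V_0$ with $\beta \cdot D_2$ leaf subtrees), so its first Betti number vanishes, and the total genus is $g_0 + \sum_{j=1}^{\beta \cdot D_2} g_j = |\vec g| = g$.

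For rigidity, I would argue directly that no nontrivial continuous deformation of $h_{\vec g}$ preserves the given combinatorial type and type $u_p$. Each vertex of $\Gamma_{\vec g}$ is mapped to a zero-dimensional stratum — either $v_Y$ or $v_\PP$ — of $X_0^\trop$, so no vertex position can be varied. Each bounded edge $E_j^{D_2}$ is required by the combinatorial type to cover the bounded edge $e_{D_2}$ (of length one) with integer weight $1$, which forces $\ell(E_j^{D_2}) = 1$. There are no other moduli, hence $h_{\vec g}$ is rigid.

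I do not expect a substantive obstacle here: both clauses reduce to bookkeeping against the ACGS definitions. The only care needed is to match conventions — how curve classes living on different irreducible components of $X_0$ combine into a single class on $X_0$, and how the weight and length of a bounded edge constrain its image — so that the verification lines up precisely with the setup of the decomposition formula recalled in Section~\ref{decomp_formula}.
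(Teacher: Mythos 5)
Your proposal is correct and matches the paper's (much terser) argument: type $u_p$ is read off directly from the definition of $h_{\vec g}$, and rigidity follows because every vertex lands on a zero-dimensional stratum ($v_Y$ or $v_\PP$) and no bounded edge is contracted, so there is nothing left to deform. The extra bookkeeping you do (the tree structure giving genus $|\vec g|=g$, and the curve classes summing to $\beta$ under the contraction of the fibers of $\PP\rightarrow D_2$) is consistent with what the paper leaves implicit.
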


\begin{proof}
The fact that $h_{\vec{g}}$ is of type $u_p$ is immediate from its definition.
The rigidity follows from the facts that $h_{\vec{g}}$ has no contracted bounded edge and all vertices of $\Gamma_{\vec{g}}$ are mapped to vertices of $X_0^\trop$: it is not possible to deform $h$ without changing its combinatorial type.
\end{proof}

\begin{prop} \label{prop_rigid}
Let 
$h \colon \Gamma \rightarrow X_0^\trop$ 
be a genus $g$ rigid decorated parametrized tropical curve of type $u_p$. 
Assume that there exists 
\begin{center}
\begin{tikzcd}
C \arrow{r}{f} \arrow{d}{\nu}
& X_{0} \arrow{d}{\pi_0}\\
W \arrow{r}{g} & \pt_\NN \,,
\end{tikzcd}
\end{center}
element of $\overline{M}_g^h(X_0,\beta,\tilde{\sigma})$, such that the dual graph of 
$C$ has genus $0$. Then there exists $\vec{g}$ with 
$|\vec{g}|=g$ such that $h=h_{\vec{g}}$.
\end{prop}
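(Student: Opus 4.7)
The plan is to combine Proposition \ref{prop_trop_curves} on the tropicalization $h' = \Sigma(f)_b \colon \Sigma(C)_b \to X_0^{\trop}$ of the given stable log map with the rigidity of $h$ and the structure of the retraction $h' \to h$. Since the dual graph of $C$ has genus $0$, Proposition \ref{prop_trop_curves} applies to $h'$, and the proof of Lemma \ref{lem_homology} shows that the vertices of $\Sigma(C)_b$ mapping to $v_\PP$ are exactly the pairwise non-adjacent vertices $V_1, \ldots, V_{\beta \cdot D_2}$, each carrying (besides its unbounded edge $E_j$) a single adjacent edge of weight $1$ mapping onto $e_{D_2}$, connecting it either directly to a vertex at $v_Y$ or through a chain of interior-$e_{D_2}$ vertices.

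Rigidity of $h$ forces every vertex of $\Gamma$ to lie in a $0$-dimensional cone of $X_0^{\trop}$, i.e.\ at $v_Y$ or $v_\PP$; otherwise such a vertex could be slid along the interior of an edge of $X_0^{\trop}$ without altering the combinatorial type. Consequently, every bounded edge of $\Gamma$ maps non-trivially onto $e_{D_2}$ (the only bounded edge of $X_0^{\trop}$), and $\Gamma$ is bipartite between its $v_Y$- and $v_\PP$-vertices. Any chain of interior-$e_{D_2}$ vertices in $\Sigma(C)_b$ is straightened by the retraction into a single weight-$1$ $e_{D_2}$-edge of $\Gamma$, since such an edge cannot be contracted (its image would fail to lie in a $0$-cone). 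Therefore the $V_j$'s descend to $\beta \cdot D_2$ distinct $v_\PP$-vertices of $\Gamma$, each with a unique bounded edge of weight $1$, and the unbounded edge $E_0$ attaches at a $v_Y$-vertex $V_0$, since $E_0 \subset e_{D_1}$ by Proposition \ref{prop_trop_curves}.

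The underlying graph $\Gamma$ is a tree, because $\Sigma(C)_b$ is one (genus $0$ dual graph of $C$) and the retraction is obtained by contracting subtrees, hence preserves tree-ness. The bounded-edge subgraph of $\Gamma$ is a connected subtree containing all vertices (removing unbounded leaves preserves connectivity); each of its $\beta \cdot D_2$ leaves on the $v_\PP$-side contributes one edge, and each edge has a unique $v_\PP$-endpoint by bipartiteness, so the bounded-edge subgraph has exactly $\beta \cdot D_2$ edges. The tree identity edges $=$ vertices $-\, 1$ then forces the number of $v_Y$-vertices to be $1$: $V_0$ is the unique $v_Y$-vertex, and each $V_j$ connects directly to $V_0$ by a weight-$1$ bounded edge along $e_{D_2}$. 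The curve-class decorations are determined: each $V_j$ carries the fiber class $[\PP^1]$ of $\PP \to D_2$, and since fiber classes push to zero in $H_2(Y,\Z)$, $V_0$ must carry $\beta$. Distributing the total genus $g$ among the vertices yields a tuple $\vec g = (g_0, g_1, \ldots, g_{\beta \cdot D_2})$ with $|\vec g| = g$, and $h = h_{\vec g}$.

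The main technical obstacle is making precise the interaction between rigidification and retraction: specifically, that weight-$1$ $e_{D_2}$-edges adjacent to the $V_j$'s cannot be contracted in the retraction (their images would otherwise fail to lie in a $0$-cone of $X_0^{\trop}$), and that tree-ness is preserved, so that the final combinatorial type of $h$ is entirely dictated by the bipartite-tree count above.
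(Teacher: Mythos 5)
Your proof is correct and follows essentially the same route as the paper: the paper's own argument simply observes that $h$ is a rigid retraction of the tropicalization $\Sigma(f)_b$, invokes Proposition \ref{prop_trop_curves} (together with Lemma \ref{lem_homology}), and asserts that any rigid retraction of such a map must be of the form $h_{\vec{g}}$. You have in addition supplied the combinatorial details the paper leaves implicit (rigidity pinning the vertices of $\Gamma$ at $v_Y$ and $v_{\PP}$, bipartiteness of the bounded-edge subgraph, and the tree count forcing a unique $v_Y$-vertex), which is a faithful expansion rather than a different argument.
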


\begin{proof}
Consider the tropicalization
\begin{center}
\begin{tikzcd}
\Sigma(C) \arrow{r}{\Sigma(f)} \arrow{d}{\Sigma(\nu)}
& \Sigma(X_0) \arrow{d}{\Sigma(\pi_0)}\\
\Sigma(W) \arrow{r}{\Sigma(g)} & \Sigma(\pt_\NN)  \,.
\end{tikzcd}
\end{center}
For every $b \in \Sigma(g)^{-1}(1)$,
let 
\[ \Sigma(f)_b \colon
\Sigma(C)_b \rightarrow \Sigma(\pi_0)^{-1}
(1) =X_0^\trop \]
be the fiber of $\Sigma(f)$ over $b$.
It follows from the definition of marking by $h$ (Definition 4.4.1 of 
\cite{abramovich2017decomposition}) that $h$ is a retraction of 
$\Sigma(f)_b \colon \Sigma(C)_b \rightarrow X_0^\trop$.
By assumption $\Sigma(C)_b$ has genus $0$ and so we can apply Proposition 
\ref{prop_trop_curves} to get a relatively explicit description of $\Sigma(f)_b$.
It follows from this description that any rigid retraction of $\Sigma(f)_b$
is of the form $h_{\vec{g}}$ for some $\vec{g}$.
\end{proof}

\begin{lem} \label{lem_vanishing}
Let $h \colon \Gamma \rightarrow X_0^\trop$ be a rigid genus $g$ decorated parametrized 
tropical curve of type $u_p$. Assume that for every
\begin{center}
\begin{tikzcd}
C \arrow{r}{f} \arrow{d}{\nu}
& X_{0} \arrow{d}{\pi_0}\\
W \arrow{r}{g} & \pt_\NN \,,
\end{tikzcd}
\end{center}
element of $\overline{M}_g^h(X_0,\beta,\tilde{\sigma})$, the dual graph of $C$
has positive genus.
Then we have 
\[\int_{[\overline{M}_g^{h_{\vec{g}}}(X_0,\beta,\tilde{\sigma})]^\virt} (-1)^g \lambda_g =0\,.
\]
\end{lem}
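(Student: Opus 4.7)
My plan is to deduce the vanishing from the classical fact that the Hodge class $\lambda_g$ vanishes on the locus of stable curves whose dual graph has positive first Betti number.

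I would first recall the pointwise origin of this vanishing. For a connected nodal curve $C$ with normalization $\nu \colon \tilde{C} \to C$ and dual graph $\Gamma$, the exact sequence
\[ 0 \to \cO_C \to \nu_{*} \cO_{\tilde{C}} \to \bigoplus_{\text{nodes}} \C \to 0 \]
produces in cohomology a subspace $H^1(\Gamma, \C) \subset H^1(C, \cO_C)$ of purely topological origin, of dimension $b_1(\Gamma)$. Dualizing via Serre duality, $H^0(C, \omega_C)$ admits a $b_1(\Gamma)$-dimensional quotient of purely combinatorial origin. Globalizing stratum by stratum of the Artin stack of prestable curves (on which the combinatorial type of the source is locally constant), the Hodge bundle $\mathbb{E}$ admits a trivial quotient bundle of rank $b_1(\Gamma)$ on each stratum, so $\lambda_g = c_g(\mathbb{E})$ vanishes there as soon as $b_1(\Gamma) > 0$.

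Next, I would invoke the hypothesis of the Lemma: every stable log map in $\overline{M}_g^h(X_0, \beta, \tilde{\sigma})$ has a source curve with dual graph of positive genus, i.e.\ $b_1(\Gamma) > 0$. Stratifying by combinatorial type of the source and applying the pointwise computation stratum by stratum, one obtains that $\lambda_g$, as a class pulled back from the Artin stack of prestable curves, vanishes after restriction to every stratum of $\overline{M}_g^h(X_0, \beta, \tilde{\sigma})$.

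To upgrade this pointwise vanishing to a cycle-level vanishing against the virtual fundamental class, I would rely on the standard argument that $\lambda_g$ admits a representative supported on the compact-type locus, whose preimage in $\overline{M}_g^h(X_0, \beta, \tilde{\sigma})$ is empty by assumption. The cap product
\[ \lambda_g \cap [\overline{M}_g^h(X_0, \beta, \tilde{\sigma})]^\virt \]
thus vanishes in $A_0$, and taking degree yields the Lemma. The main obstacle is precisely this last step, i.e.\ the usual subtlety of passing from set-theoretic information about the source curves to a cycle-level vanishing against a virtual class. This is by now standard in Gromov--Witten theory with $\lambda_g$ insertions and can be handled exactly as in the analogous computations of \cite{bousseau2017tropical} and \cite{bousseau2018quantum_tropical} which the author already signals as the technical backbone of the argument.
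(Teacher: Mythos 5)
The paper's own proof of this Lemma is a single sentence: it invokes the general fact that $\lambda_g$ vanishes on families of curves whose dual graphs contain cycles, citing Lemma 8 of \cite{bousseau2017tropical}. Your fibrewise explanation of where this vanishing comes from --- the subspace $H^1(\Gamma,\C)\subset H^1(C,\cO_C)$ of combinatorial origin, giving dually a trivial quotient of $H^0(C,\omega_C)$ of rank $b_1(\Gamma)$ --- is the correct heuristic, and you rightly identify that the whole content of the Lemma is in upgrading this to a statement about $\lambda_g\cap[\overline{M}^h_g(X_0,\beta,\tilde\sigma)]^{\virt}$.

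The mechanism you propose for that upgrade, however, does not work as stated, on two counts. First, knowing that the restriction of $c_g(\E)$ to every locally closed stratum of a stratification vanishes says nothing about a cap product on the whole space: $c_1(\cO(1))$ on $\PP^1$ restricts to zero on every stratum of the stratification by points, yet its degree is $1$. Second, the assertion that ``$\lambda_g$ admits a representative supported on the compact-type locus'' is not the relevant statement and is, if anything, backwards. What must vanish is the operational class $c_g(\E)$ acting on the virtual class of $\overline{M}^h_g(X_0,\beta,\tilde\sigma)$ itself; a choice of cycle representative of $\lambda_g$ capped with a fundamental class elsewhere, supported in a given open dense locus, carries essentially no information (rational equivalence can relocate a zero-cycle into any dense open subset, and already $12\lambda_1=\delta_{\mathrm{irr}}$ on $\overline{\cM}_{1,1}$ shows that the natural support of $\lambda_g$ is the non-compact-type boundary, not its complement). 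The statement that actually does the work, and that Lemma 8 of \cite{bousseau2017tropical} proves, is that $c_g(\E)$ vanishes as an operational class on any base all of whose geometric fibres have a non-separating node. This is proved not stratum by stratum but by pulling back along the gluing (partial normalization) morphism $\mathfrak{M}_{g-1,2}\to\mathfrak{M}_{g}$, which is finite and surjective onto the closed locus of curves with a non-separating node and under which the Hodge bundle acquires a trivial rank-one summand globally; one then concludes by pushing forward over each subvariety of the base. Since your write-up ultimately defers to the same reference the paper cites, it arrives at the right place, but the intermediate justification as written is a gap, not a proof.
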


\begin{proof}
It is a general property of $\lambda_g$ that it vanishes on families of curves containing cycles of irreducible components
(see for example Lemma 8 of 
\cite{bousseau2017tropical}).
\end{proof}

\subsection{End of the proof of Proposition \ref{prop_deg_point}} \label{end_proof}
Combining Proposition \ref{prop_rigid} and Lemma \ref{lem_vanishing}, the 
decomposition formula of Section \ref{decomp_formula} implies that
\[N_{g,\beta}^{Y/D_1}
=\sum_{\vec{g},|\vec{g}|=g} \int_{[\overline{M}_g^h(X_0,\beta,\tilde{\sigma})]^{\virt}}
(-1)^g \lambda_g\,.\]
The last step of the proof of Proposition \ref{prop_deg_point} is the 
expression of 
\[\int_{[\overline{M}_g^h(X_0,\beta,\tilde{\sigma})]^{\virt}}
(-1)^g \lambda_g\]
as product of factors indexed by the vertices of $V$, and the evaluation of each factor.
This is obtained by the same gluing argument used in Section 7 of 
\cite{bousseau2017tropical} and Section 5.5 of \cite{bousseau2018quantum_tropical}.
The factors $\frac{1}{\hbar}$
are the contributions of the vertices $V_j$
of $h_{\vec{g}}$,
$1 \leqslant j \leqslant \beta \cdot D_2$,
and come from the relative Gromov-Witten theory of
$\PP^1_{\tilde{\sigma}_j(0)} \simeq \PP^1$.
Indeed, let $M$ be the moduli space of degree $1$ stable maps to 
$\PP^1$ relative to a point $\infty \in \PP^1$, $\nu \colon C \rightarrow M$ the universal curve, $f \colon C \rightarrow \PP^1$ the universal stable map, and  $[M]^{\virt}$ the natural virtual fundamental class. As the normal bundle  of $\PP^1_{\tilde{\sigma}_j(0)}$ in $\PP$ is trivial,
the virtual fundamental class on $M$ coming from the surface $\PP$ is obtained by intersecting $[M]^{\virt}$ with 
$e(R^1 \nu_{*}f^{*}\cO)=(-1)^g \lambda_g$. Therefore, 
the contribution of $V_j$ is $\sum_{g \geq 1} \left( \int_{[M]^{\virt}} \lambda_g^2 \right) \hbar^{2g-1}$. By Mumford's formula \cite{mumford}, we have 
$\lambda_g^2=0$ for $g>0$ and $\lambda_0^2=1$.

\section{Exchange of relative point conditions with blow-ups}
\label{section_exchange_blow_ups}

\subsection{Statement}
We continue the proof of Theorem \ref{main_thm_precise}. Given Proposition \ref{prop_deg_point}, it is enough to 
fit the invariants $N_{g,\beta}^{Y/D}$ into the framework of Section 5.8 of 
\cite{bousseau2018quantum_tropical}. This will be done by exchanging the $\beta \cdot D_2$ relative points conditions
along $D_2$ with no condition on a surface obtained by blowing-up $\beta \cdot D_2$ points on $D_2$.

Let $Z$ be the surface obtained from 
$Y$ by blowing-up $\beta \cdot D_2$ points on
$D_2$, which are 
distinct from each other and distinct from $D_1 \cap D_2$.
Denote $\pi_Y \colon Z \rightarrow Y$ the blow-up morphism, $F_1,\dots, F_{\beta \cdot D_2}$ the exceptional divisors and
\[ \beta_Z
\coloneqq \pi_Y^{*}\beta -\sum_{j=1}^{\beta \cdot D_2} [F_j] 
\in H_2(Z,\Z)\,.\]  We still denote $D_1$, $D_2$ and $D$ the strict transforms of $D_1$, $D_2$ and $D$ in $Z$.
We view $Z$ as a smooth log scheme for the divisorial log structure defined by $D
=D_1 \cup D_2$.

Let 
$\overline{M}_g(Z/D,\beta_Z)$ be the moduli space of genus $g$ class $\beta_Z$ stable log maps to 
$Z$, with contact order $\beta \cdot D_1$ to $D_1$ in a single point. 
It is a proper Deligne-Mumford stack, admitting a virtual fundamental class 
\[ [\overline{M}_g(Z/D,\beta_Z)]^\virt
\in A_g(\overline{M}_g(Z/D,\beta_Z),\Q)\,.\]
We define 
\[ N_{g,\beta}^{Z/D} \coloneqq \int_{
[\overline{M}_g(Z/D,\beta_Z)]^\virt} (-1)^g \lambda_g \in \Q \,.\]

\begin{prop}\label{prop_exchange_blow_ups}
We have 
\[\sum_{g \geqslant 0}N_{g,\beta}^{Z/D} \hbar^{2g-1}
=\left( \sum_{g \geqslant 0} N_{g,\beta}^{Y/D} \hbar^{2g-1+\beta \cdot D_2} \right) \left( \frac{1}{2 \sin \left(
\frac{\hbar}{2}
\right)}\right)^{\beta \cdot D_2} \,.\]
\end{prop}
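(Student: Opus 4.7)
The proof of Proposition \ref{prop_exchange_blow_ups} follows exactly the same template as the proof of Proposition \ref{prop_deg_point} in Section \ref{section_deg_point}, with the deformation to the normal cone of $D_2$ replaced by a different degeneration. The plan is to exchange the $\beta \cdot D_2$ relative point conditions along $D_2$ for the $\beta \cdot D_2$ interior blow-ups defining $Z$, using a degeneration whose general fiber is $Y$ and whose special fiber contains $Z$ as a component.

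The specific degeneration I would use is $X \to \A^1$, the blow-up of $Y \times \A^1$ at the $\beta \cdot D_2$ points $(p_j, 0)$, where the $p_j \in D_2$ are the chosen centers of the interior blow-up $\pi_Y \colon Z \to Y$. The general fiber $X_t$ for $t \neq 0$ is $Y$, while the special fiber is the normal-crossings scheme $X_0 = Z \cup \bigcup_j E_j$, with each $E_j \cong \PP^2$ glued to $Z$ along the exceptional curve $F_j \subset Z$. Equipping $X$ with the divisorial log structure defined by $\widetilde{D_1 \times \A^1} \cup \widetilde{D_2 \times \A^1} \cup \pi^{-1}(0)$ makes the projection $X \to \A^1$ log smooth over $\A^1$ with its standard toric log structure. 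Choosing the relative point sections $\sigma_j$ to lie in $D_2 \times \A^1$, their strict transforms $\tilde{\sigma}_j$ in $X$ land on the auxiliary components $E_j$, specifically on the intersection $L_j$ of $E_j$ with the strict transform of $D_2 \times \A^1$; these provide the relative point data required in $\overline{M}_g(X_0, \beta, \tilde{\sigma})$, whose virtual invariants recover $N_{g,\beta}^{Y/D}$ by deformation invariance.

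From there, the argument runs exactly parallel to Sections \ref{trop_special}--\ref{end_proof}: tropicalize $X_0$ into a cone complex with vertices $v_Z$ and $v_{E_j}$ and bounded edges dual to the $F_j$; analyze the possible tropicalizations of stable log maps in $\overline{M}_g(X_0, \beta, \tilde{\sigma})$ using the tropical balancing condition together with the $\lambda_g$-vanishing on cyclic dual graphs (Lemma \ref{lem_vanishing}) to show that the only contributing rigid tropical curves are trees with a main vertex at $v_Z$ carrying class $\beta_Z$ connected by weight-one edges to $\beta \cdot D_2$ auxiliary vertices at the $v_{E_j}$ carrying the line class of $\PP^2 = E_j$; apply the decomposition formula of \cite{abramovich2017decomposition}; and conclude via the gluing argument of Section \ref{end_proof} (following Section 7 of \cite{bousseau2017tropical} and Section 5.5 of \cite{bousseau2018quantum_tropical}). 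The main vertex contributes $N_{g_0,\beta}^{Z/D}$, and each auxiliary vertex contribution is evaluated from the log Gromov--Witten theory of $(\PP^2, F_j \cup L_j)$ for the line class with the prescribed contact data, using the same $\PP^1$-bubble reasoning as in Lemma 23 of \cite{bousseau2018quantum_tropical} via \cite{MR2115262}. Rearranging the resulting identity yields Proposition \ref{prop_exchange_blow_ups}.

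The main obstacle, as in Section \ref{section_deg_point}, is the tropical enumeration step. The auxiliary components $E_j \cong \PP^2$ are not toric with respect to the induced log structure (which consists only of the two boundary lines $F_j$ and $L_j$), so the tropical balancing conditions alone do not immediately rule out rigid tropical curves involving higher-degree classes on the $E_j$. As emphasized in the Technical content part of the Introduction, eliminating such spurious contributions requires combining the balancing analysis with homological and intersection-theoretic arguments at the level of stable log maps, while being careful that the decomposition formula is a virtual statement and not every rigid tropical curve is realized as a tropicalization of a stable log map; one must therefore study all possible tropicalizations, including the non-rigid ones, and identify the contributing rigid tropical curves as their possible rigid limits.
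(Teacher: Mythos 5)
Your strategy is genuinely different from the paper's: you degenerate the $Y$-side invariant, blowing up the points $(p_j,0)$ in $Y\times\A^1$ so that the general fiber is $Y$ and the special fiber is $Z\cup\bigcup_j E_j$ with $E_j\simeq\PP^2$, whereas the paper degenerates the $Z$-side invariant (it blows up the $\beta\cdot D_2$ sections inside the degeneration of $Y$ to the normal cone of $D_2$, so the general fiber is $Z$ and the special fiber is $Y\cup\tilde{\PP}$). This is an a priori reasonable route, and your tropical outline is of the right kind, but two steps of your argument are genuine gaps, not routine transplants of Sections \ref{trop_special}--\ref{end_proof}.

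First, the auxiliary vertex factor. Writing $F_Y=\sum_g N^{Y/D}_{g,\beta}\hbar^{2g-1}$ and $F_Z=\sum_g N^{Z/D}_{g,\beta}\hbar^{2g-1}$, Proposition \ref{prop_exchange_blow_ups} is equivalent to $F_Y=F_Z\left(\frac{2\sin(\hbar/2)}{\hbar}\right)^{\beta\cdot D_2}$. Your degeneration expresses $F_Y$ as the main vertex series times the product of the $\beta\cdot D_2$ auxiliary vertex series, so each auxiliary vertex must contribute $\frac{2\sin(\hbar/2)}{\hbar}=1-\frac{\hbar^2}{24}+\cdots$, which is essentially the \emph{reciprocal} of the factor $\frac{1}{2\sin(\hbar/2)}$ produced by the reasoning you invoke. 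Indeed, Lemma 23 of \cite{bousseau2018quantum_tropical} and the computation of \cite{MR2115262} used in the paper concern a curve meeting the boundary in a single point (the fiber class through $\tilde{\sigma}_j(0)$, or the $(-1)$-curve $\PP^1_j$), while your vertex is the line class in $E_j\simeq\PP^2$ relative to the two boundary lines $F_j\cup L_j$, with a relative point condition on $L_j$ and, by dimension count, the point class at the $F_j$-gluing contact (the main vertex has virtual dimension $g_0$, entirely consumed by $\lambda_{g_0}$, so it can only take fundamental classes at the $F_j$'s). If one transplants $\frac{1}{2\sin(\hbar/2)}$ per vertex, the "rearranged identity" puts $\left(2\sin(\hbar/2)\right)^{\beta\cdot D_2}$ on the wrong side and contradicts the statement; the two candidate factors are not related by powers of $\hbar$. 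So your route requires a new all-genus local evaluation (the $(-1)^g\lambda_g$-series of the line in $(\PP^2,F_j\cup L_j)$ with the two point conditions), which is neither contained in the cited references nor "the same" computation, and without it the final step of your proof does not go through.

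Second, your main vertex is not literally $N^{Z/D}_{g_0,\beta}$. In your special fiber the component $Z$ carries the induced log boundary $D_1\cup D_2\cup F_1\cup\cdots\cup F_{\beta\cdot D_2}$, since the exceptional curves $F_j$ are double loci; the main vertex is therefore an invariant of $Z$ relative to this larger divisor, with one contact point of order $1$ on each $F_j$. Identifying it with $N^{Z/D}_{g_0,\beta}$, which is relative to $D_1\cup D_2$ only, requires an additional comparison argument that you do not supply. The paper's choice of degeneration avoids this issue entirely: its $Z$-side invariant sits on the general fiber, and its special-fiber main component is $Y$ with boundary exactly $D_1\cup D_2$, matching the definition of $N^{Y/D}_{g,\beta}$ on the nose. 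Two smaller points: since your constraint points $\tilde{\sigma}_j(0)$ lie on the horizontal divisor $\widetilde{D_2\times\A^1}$, the strictness remark in the paper's footnotes does not apply and the evaluations at relative marked points must be handled in the fs category; and the tropical enumeration (forcing degree $1$ on each $E_j$, excluding components in the boundary or at the corners $F_j\cap L_j$) is only sketched, though there you at least identify the correct kind of homological and intersection-theoretic argument.
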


\subsection{Summary of the proof of Proposition \ref{prop_exchange_blow_ups}}
The proof of Proposition \ref{prop_exchange_blow_ups} takes the rest of Section
\ref{section_exchange_blow_ups} and is parallel to the proof of 
Proposition \ref{prop_deg_point} given in Section \ref{section_deg_point}.
For this reason, we will often refer to the proof of Proposition \ref{prop_deg_point}
and only explain in detail the points specific to Proposition \ref{prop_exchange_blow_ups}. 

In Section \ref{degeneration2}, we describe the relevant degeneration. The special fiber of the degeneration has a natural
log structure whose tropicalization is described in Section
\ref{trop_special2}. This gives us a way to describe the relevant log Gromov-Witten 
invariants of the special fiber in Section \ref{inv_special2}. The core of the proof is Section
\ref{tropical_curves2}, which is a study of the possible tropicalizations of stable log maps contributing to these invariants. 
We conclude the proof in Section \ref{end_proof2}.

\subsection{Degeneration} \label{degeneration2}
As in Section \ref{degeneration}, let 
$X$ be the degeneration of $Y$ to the normal cone of $D_2$. 
For every $j=1,\dots,\beta \cdot D_2$, we choose a point $x_j$ on $D_2-(D_2\cap D_1)$ such that 
$x_j \neq x_k$ if $j \neq k$. Let $s_j$ be the section of 
$\pi \colon X \rightarrow \A^1$ obtained as strict transform of the section 
$\{x_j\} \times \A^1$ of $Y \times \A^1$.

We blow-up the sections $s_j$ of $\pi$ to obtain a new family
$\tilde{\pi} \colon \tilde{X} \rightarrow  \A^1$.
We have $\tilde{\pi}^{-1}(t)=Z$ if $t\neq 0$. 
Let $\tilde{\PP}$ be the surface obtained from $\PP=\PP(N_{D_2|Y} \oplus \cO_{D_2})$
by blowing-up the points 
$s_j(0)$, $1 \leqslant j \leqslant \beta \cdot D_2$. We still denote $D_{2,0}$, 
$D_{2,\infty}$, $\PP^1_{p_1}$ and  $\PP^2_{p_2}$
the strict transforms in $\tilde{\PP}$ of the divisors $D_{2,0}$, 
$D_{2,\infty}$, $\PP^1_{p_1}$ and  $\PP^2_{p_2}$ of $\PP$.
The special fiber $\tilde{\pi}^{-1}(0)$ has two irreducible components, 
$Y$ and $\tilde{\PP}$, with $D_2$ in $Y$
glued along $D_{2,0}$ in $\tilde{\PP}$.

Let $\widetilde{D_1 \times \A^1}$ be the divisor on $\tilde{X}$
obtained as strict transform of the divisor $D_1 \times \A^1$ in 
$Y \times \A^1$.
Similarly, 
let $\widetilde{D_2 \times \A^1}$ be the divisor on $\tilde{X}$
obtained as strict transform of the divisor $D_2 \times \A^1$ in 
$Y \times \A^1$.
 We view $\tilde{X}$ as a log scheme for the 
divisorial log structure defined by the normal crossing divisor 
$\widetilde{D_1 \times \A^1} \cup
\widetilde{D_2 \times \A^1} \cup \tilde{\pi}^{-1}(0)$.
We view $\A^1$ as a log scheme for the toric divisorial log structure.
Then $\tilde{\pi}$ naturally define a log smooth morphism. Remark that for $t \neq 0$, 
$\tilde{\pi}^{-1}(t)$ with the log structure restricted from $\tilde{X}$
is $Z$ with the divisorial log structure defined by 
$D=D_1 \cup D_2$.

We denote $\tilde{X}_0 \coloneqq \tilde{\pi}^{-1}(0)$ with the log structure restricted from $\tilde{X}$,
and $\tilde{\pi}_0 \colon \tilde{X}_0 \rightarrow \pt_\NN$ the log smooth morphism to the standard log point induced by $\tilde{\pi}$.

\subsection{Tropicalization of the special fiber} \label{trop_special2}

Tropicalizing the log morphism $\tilde{\pi}_0
\colon \tilde{X}_0
\rightarrow \pt_\NN$, we get a morphism 
of cone complexes
$\Sigma(\tilde{\pi}_0)
\colon
\Sigma(\tilde{X}_0)
\rightarrow \Sigma(\pt_\NN)$.
We have 
$\Sigma(\pt_\NN) = \R_{\geqslant 0}$
and $\Sigma(\tilde{X}_0)$ is naturally
identified with the cone over the fiber 
$\Sigma(\pi_0)^{-1}(1)$ at $1 \in \R_{\geqslant 0}$. 
It is thus enough to describe the cone complex 
$\Sigma(\tilde{\pi}_0)^{-1}(1)$.
We denote 
\[\tilde{X}_0^\trop \coloneqq\Sigma(\tilde{\pi}_0)^{-1}(1) \,.\]
The cone complex $\tilde{X}_0^\trop$ consists of:
\begin{itemize}
    \item Vertices 
$v_Y$ and $v_{\tilde{\PP}}$
respectively dual to 
$Y$ and $\tilde{\PP}$.
   \item unbounded edges 
$e_{D_1}$, $e_{\PP^1_{p_1}}$,
$e_{\PP^1_{p_2}}$, $e_{D_{2,\infty}}$
respectively dual to 
$D_1$, 
$\PP^1_{p_1}$,
$\PP^1_{p_2}$, 
$D_{2,\infty}$.
   \item One bounded edge $e_{D_{2,0}}$ dual to 
$D_{2,0}$.
   \item Faces $f_{p_1,0}$,
$f_{p_2,0}$, $f_{p_1,\infty}$,
$f_{p_2,\infty}$ respectively dual to $p_{1,0}$,
$p_{2,0}$, $p_{1,\infty}$,
$p_{2,\infty}$.
\end{itemize}

We draw below a picture of 
$\tilde{X}_0^\trop$, where the two copies 
of $e_{D_{2,0}}$, the two copies of $v_{\tilde{\PP}}$ and the two copies 
of $e_{D_{2,\infty}}$ have to be identified.

\begin{center}
\setlength{\unitlength}{1cm}
\begin{picture}(10,5)
\thicklines
\put(3,4){\circle*{0.1}}
\put(5,4){\circle*{0.1}}
\put(7,4){\circle*{0.1}}
\put(3,4){\line(1,0){2}}
\put(5,4){\line(1,0){2}}
\put(3,4){\line(0,-1){3}}
\put(5,4){\line(0,-1){3}}
\put(7,4){\line(0,-1){3}}
\put(3,4){\line(-1,0){3}}
\put(7,4){\line(1,0){3}}
\put(2.3,2){$e_{{\PP^1_{p_1}}}$}
\put(7.1,2){$e_{{\PP^1_{p_2}}}$}
\put(2.4,4.1){$v_{\tilde{\PP}}$}
\put(7.1,4.1){$v_{\tilde{\PP}}$}
\put(5.1,3.7){$v_Y$}
\put(3.8,4.2){$e_{D_{2,0}}$}
\put(5.8,4.2){$e_{D_{2,0}}$}
\put(5.1,2){$e_{D_1}$}
\put(3.7,2.8){$f_{p_1,0}$}
\put(5.7,2.8){$f_{p_2,0}$}
\put(0.7,2.8){$f_{p_1,\infty}$}
\put(8.7,2.8){$f_{p_2,\infty}$}
\put(0.7,4.2){$e_{D_{2,\infty}}$}
\put(8.7,4.2){$e_{D_{2,\infty}}$}
\put(5,4.4){\oval(8,0.4)[t]}
\end{picture}
\end{center}
Remark that $\Sigma(\tilde{\pi}_0)^{-1}(0)$ is the ``asymptotic version" of $\tilde{X}_0^\trop$, given by 
\begin{center}
\setlength{\unitlength}{1cm}
\begin{picture}(10,5)
\thicklines
\put(5,4){\circle*{0.1}}
\put(3,4){\line(1,0){2}}
\put(5,4){\line(1,0){2}}
\put(5,4){\line(0,-1){3}}
\put(5.1,3.6){$v_Z^0$}
\put(5.1,2){$e_{D_1}^0$}
\put(2.7,2.8){$f_{p_1}^0$}
\put(6.7,2.8){$f_{p_2}^0$}
\put(2.7,4.2){$e_{D_{2}}^0$}
\put(6.7,4.2){$e_{D_{2}}^0$}
\put(5,4.5){\oval(4,0.4)[t]}
\end{picture}
\end{center}
which can be identifed with the tropicalization of $Z$, viewed as a
log scheme for the divisorial log structure defined by $D=D_1 \cup D_2$.

Contact orders for stable log maps to a log scheme are integral points in the 
tropicalization of this log scheme 
(see Definition 2.3.12 of \cite{abramovich2017decomposition}).
We denote $\tilde{u}_p$ the contact order for stable log maps to $\tilde{X}_0$ defined by $\beta \cdot D_1$ times the integral generator (pointing away from $v_Z^0$) of the edge 
$e_{D_1}^0$ in $\Sigma(\tilde{\pi}_0)^{-1}(0)$.

\subsection{Invariants of the special fiber} \label{inv_special2}

Let $\overline{M}_g(\tilde{X}_0,\beta_Z)$
be the moduli space of genus $g$ class 
$\beta_Z$ stable log maps to 
$\tilde{\pi}_0 \colon \tilde{X}_0 \rightarrow \pt_\NN$, with a single point of contact order $\tilde{u}_p$. 
Let 
\[[\overline{M}_g(\tilde{X}_0,\beta_Z)]^\virt  \in A_g(\overline{M}_g(\tilde{X}_0,\beta_Z),\Q)\]
be its virtual fundamental class.
By deformation invariance of log Gromov-Witten invariants,
we have 
\[ N_{g,\beta}^{Z/D} = \int_{[\overline{M}_g(\tilde{X}_0,\beta_Z)]^\virt}
(-1)^g \lambda_g  \,.\]

\subsection{Tropical curves} \label{tropical_curves2}
Let
\begin{center}
\begin{tikzcd}
C \arrow{r}{f} \arrow{d}{\nu}
& \tilde{X}_0 \arrow{d}{\tilde{\pi}_0}\\
W \arrow{r}{g} & \pt_\NN \,,
\end{tikzcd}
\end{center}
be an element of $\overline{M}_g(\tilde{X}_0,\beta_Z)$.
Let 
\begin{center}
\begin{tikzcd}
\Sigma(C) \arrow{r}{\Sigma(f)} \arrow{d}{\Sigma(\nu)}
& \Sigma(\tilde{X}_0) \arrow{d}{\Sigma(\tilde{\pi}_0)}\\
\Sigma(W) \arrow{r}{\Sigma(g)} & \Sigma(\pt_\NN)
\end{tikzcd}
\end{center}
be its tropicalization.
For every $b \in \Sigma(g)^{-1}(1)$,
let 
\[ \Sigma(f)_b \colon
\Sigma(C)_b \rightarrow \Sigma(\tilde{\pi_0})^{-1}
(1) =\tilde{X}_0^\trop \]
be the fiber of $\Sigma(f)$ over $b$.

The definition of $\overline{M}_g(\tilde{X}_0,\beta_Z)$ fixes the set of unbounded edges of 
$\Sigma(C)_b$. 
The contact order $\tilde{u}_p$ at a single point is dual to an unbounded edge 
$E_0$ of $\Sigma(C)_b$. Furthermore, $\Sigma(C)_b$ has no other unbounded edge.
The contact order $\tilde{u}_p$
implies that $E_0$ is mapped by $\Sigma(C)_b$ onto an unbounded ray in 
$\tilde{X}_0^\trop$, going down in a way parallel to 
$e_{\PP^1_{p_1}}$, $e_{D_1}$ and $e_{\PP^1_{p_2}}$.
The fact that the image of $E_0$ by $\Sigma(C)_b$ can a priori
be any ray parallel to $e_{D_1}$ reflects the fact that 
a stable log map to $Y$ with a single point of maximal tangency along $D_1$
degenerates in a stable log map to $\tilde{X}_0$ with maximal tangency along $D_1 \cup \PP^1_{p_1} \cup \PP^1_{p_2}$,
and a priori not only along $D_1$. We denote $V_0$ the vertex of $\Gamma$ to which the 
unbounded edge $E_0$ is attached.

The tropical balancing condition has to be satisfied everywhere except at the vertices $v_Y$ 
and $v_{\tilde{\PP}}$ (the corresponding components do not have toric divisorial log structures).
At $v_{\tilde{\PP}}$, the tropical balancing condition is only modified in the 
horizontal direction: it still holds in the vertical direction.
We refer to Proposition 1.15 of \cite{MR3011419} for the general form of the
balancing conditions for stable log maps.

For $V$ a vertex of $\Sigma(C)_b$ and $E$ an edge adjacent to 
$V$, we denote $v_{V,E}$ the slope measured from $V$
of the restriction of $\Sigma(f)_b$ to $E$. Given our picture of 
$\tilde{X}_0^\trop$, we can identify $v_{V,E}$ with a well-defined element of $\Z^2$, except if 
$\Sigma(f)_b(E)$ is contained in $e_{D_{2,0}} \cup e_{D_{2,\infty}}$, in which case $v_{V,E}$
defines two elements of $\Z^2$ corresponding to the two copies of $e_{D_{2,0}}
\cup e_{D_{2,\infty}}$ and identified 
by the map gluing the two copies of $e_{D_{2,0}} \cup e_{D_{2,\infty}}$.

Using the same identification with $\Z^2$, we can write $\tilde{u}_p=(-\beta \cdot D_1,0)$, and we will use the standard scalar product of vectors in $\Z^2$. In particular, 
$v_{V,E} \cdot \tilde{u}_p=0$ is equivalent to $\Sigma(f)_b$ horizontal, 
$v_{V,E} \cdot \tilde{u}_p>0$ is equivalent to $\Sigma(f)_b$ pointing downward from $V$,
and $v_{V,E} \cdot \tilde{u}_p<0$ is equivalent to $\Sigma(f)_b$ pointing upward from $V$.

The following Proposition is the analogue for the present degeneration of 
the Proposition 
\ref{prop_trop_curves}
for the degeneration considered in Section \ref{section_deg_point}.

\begin{prop} \label{prop_trop_curves2}
Let
\begin{center}
\begin{tikzcd}
C \arrow{r}{f} \arrow{d}{\nu}
& \tilde{X}_{0} \arrow{d}{\tilde{\pi}_0}\\
W \arrow{r}{g} & \pt_\NN \,,
\end{tikzcd}
\end{center}
be an element of $\overline{M}_g(\tilde{X}_0,\beta_Z)$.
Let 
\begin{center}
\begin{tikzcd}
\Sigma(C) \arrow{r}{\Sigma(f)} \arrow{d}{\Sigma(\nu)}
& \Sigma(\tilde{X}_0) \arrow{d}{\Sigma(\tilde{\pi}_0)}\\
\Sigma(W) \arrow{r}{\Sigma(g)} & \Sigma(\pt_\NN) 
\end{tikzcd}
\end{center}
be its tropicalization.
Let $b$ be in the interior of $\Sigma(g)^{-1}(1)$.
Assume that $\Sigma(C)_b$ is a graph of genus $0$. Then:
\begin{itemize}
    \item For every $V$ vertex of $\Sigma(C)_b$, we have
    $\Sigma(f)_b(V)\in (e_{D_1} \cup e_{D_{2,0}})$.
    \item Let $V$ and $V'$ be vertices of $\Sigma(C)_b$ such that 
    $\Sigma(f)_b(V)$ is contained in the interior of $e_{D_1}$
    and $\Sigma(f)_b(V')$ is contained in the interior of $e_{D_{2,0}}$.
    Then, there is no edge of $\Sigma(C)_b$ connecting $V$ and $V'$.
    \item We have $E_0 \subset e_{D_1}$.
\end{itemize}
\end{prop}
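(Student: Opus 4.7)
The plan is to mirror, step by step, the proof of Proposition \ref{prop_trop_curves} in Section \ref{tropical_curves}, replacing $X_0^\trop$ by $\tilde{X}_0^\trop$ throughout. The purely tropical arguments carry over with essentially no change: the vertical projection of the balancing condition continues to hold at every vertex of $\Sigma(C)_b$ except $v_Y$, and at $v_{\tilde{\PP}}$ in the vertical direction, which is exactly what was used in Lemmas \ref{lem_descent} and \ref{lem_V_0_lower}. Thus I would first prove that every vertex with a strictly downward adjacent edge begins a descending path terminating at $V_0$, and that no vertex of $\Sigma(C)_b$ maps to a point of $\tilde{X}_0^\trop$ of ordinate strictly below that of $\Sigma(f)_b(V_0)$.

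Combining this with the genus zero assumption and the gluing of the two copies of $e_{D_{2,0}}$, the analogue of Lemma \ref{lem_D_2} follows: a vertex $V$ with $\Sigma(f)_b(V)$ in the interior of $e_{D_{2,0}}$ can only have horizontal adjacent edges, since a non-horizontal one would produce two distinct descent paths from $V$ to $V_0$ forming a cycle. The analogue of Lemma \ref{lem_no_h_component} is then immediate: every irreducible component of $C$ mapped into $\tilde{\PP}$ lies in a fiber of the projection $\tilde{\PP} \to D_2$.

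The step that requires genuine new input, and which I expect to be the main obstacle, is the homological analysis that takes the place of Lemmas \ref{lem_homology}--\ref{lem_no_f}. The key numerical inputs are $\beta_Z \cdot D_1 = \beta \cdot D_1 > 0$, $\beta_Z \cdot D_2 = 0$ in $Z$, and $\beta_Z \cdot F_j = 1$ for each exceptional divisor $F_j$ of $\pi_Y \colon Z \to Y$. Writing the curve class on the $Y$-component of the special fiber as $\beta_Y$, and using that the $\tilde{\PP}$-part is a union of components inside fibers of $\tilde{\PP} \to D_2$, the matching of intersection multiplicities along the gluing divisor forces $\beta_Y = \beta$, so that in particular $\beta_Y \cdot D_2 = \beta \cdot D_2$, and forces the $\tilde{\PP}$-part to be supported in exactly $\beta \cdot D_2$ fibers of $\tilde{\PP} \to D_2$ together with the exceptional divisors of $\tilde{\PP} \to \PP$ lying over them. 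Since $\beta_Z$ carries no contact order along $D_2$ and $\beta_Z \cdot D_2 = 0$, no component meets $D_{2,\infty}$, which excludes vertices mapping into $e_{D_{2,\infty}}$ or into the faces $f_{p_1,\infty}$, $f_{p_2,\infty}$. The arguments of Lemmas \ref{lem_no_D_1} and \ref{lem_no_f} then rule out vertices in the interiors of $e_{\PP^1_{p_1}}$, $e_{\PP^1_{p_2}}$, $f_{p_1,0}$, and $f_{p_2,0}$.

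Combining these statements yields the first two bullets of Proposition \ref{prop_trop_curves2}. The second bullet is the genus zero cycle argument: a path between an interior vertex of $e_{D_1}$ and an interior vertex of $e_{D_{2,0}}$ would close up with two descent paths to $V_0$, in contradiction with the genus zero hypothesis. For the third bullet, $\beta \cdot D_1 > 0$ guarantees the existence of a vertex with a strictly downward adjacent edge; descent and minimality then prevent $V_0$ from lying on $e_{D_{2,0}}$, so $\Sigma(f)_b(V_0) \in e_{D_1}$ and $E_0 \subset e_{D_1}$.
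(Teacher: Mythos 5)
Your overall strategy is the paper's: carry over Lemmas \ref{lem_descent}, \ref{lem_V_0_lower} and \ref{lem_D_2} verbatim, and redo the homological analysis using $\beta_Z\cdot F_j=1$; your treatment of the fibers of $\tilde{\PP}\rightarrow D_2$ and your derivation of the second and third bullets match the paper's Lemmas \ref{lem_P_tilde}--\ref{lem_no_f2} and its concluding paragraph. The gap is in your treatment of $D_{2,\infty}$, which is precisely the one genuinely new feature of this degeneration compared to Section \ref{section_deg_point}. You dismiss it with ``since $\beta_Z\cdot D_2=0$, no component meets $D_{2,\infty}$, which excludes vertices mapping into $e_{D_{2,\infty}}$''. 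That intersection-number argument only controls components \emph{not contained in} $D_{2,\infty}$: a component of $C$ mapped \emph{onto} $D_{2,\infty}$ with degree $d$ contributes $d\cdot D_{2,\infty}^2$ to $\beta_Z\cdot D_2$, and $D_{2,\infty}^2=D_2^2-\beta\cdot D_2$ in $\tilde{\PP}$ is negative in general (e.g.\ $4-2d$ for $\PP^2(1,4)$ with $d\geqslant 3$), so the vanishing of the total does not exclude such components. Vertices of $\Sigma(C)_b$ in the interior of $e_{D_{2,\infty}}$ are exactly dual to components mapped into $D_{2,\infty}$, so your argument does not rule them out. This matters twice: these vertices must be excluded for the first bullet itself, and the fiber statement (your analogue of Lemma \ref{lem_no_h_component}) is \emph{not} immediate without this exclusion, since the horizontality argument of Lemma \ref{lem_D_2} applies only to vertices on the bounded edge $e_{D_{2,0}}$, not to vertices on $e_{D_{2,\infty}}$.

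The paper closes this gap with Lemma \ref{lem_D_2_infinity}, by a tropical rather than numerical argument: a vertex in the interior of $e_{D_{2,\infty}}$ has only horizontal adjacent edges (the same descent-path/genus-zero argument as in Lemma \ref{lem_D_2}), and the horizontal balancing condition at the leftmost (or rightmost) such vertex then forces an unbounded edge of $\Sigma(C)_b$ mapped into $e_{D_{2,\infty}}$ --- impossible because $E_0$ is the only unbounded edge (there are no marked points in $\overline{M}_g(\tilde{X}_0,\beta_Z)$, unlike in the first degeneration). You need to add this step and place it before the fiber lemma, since Lemma \ref{lem_no_h_component2} uses it as input. A smaller point: your claim that the gluing forces $\beta_Y=\beta$ is stronger than what the argument gives or needs; the chain of inequalities in Lemma \ref{lem_homology2} only establishes $\beta_Y\cdot D_{2,0}=\beta\cdot D_2$ together with the identification of the connected components $\tau_j$, which is all that is used.
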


The proof of Proposition \ref{prop_trop_curves2} is parallel to the proof of 
Proposition \ref{prop_trop_curves}.
We first remark that the natural analogues of Lemmas \ref{lem_descent},
\ref{lem_V_0_lower}, \ref{lem_D_2} still hold, with the same proofs.

\begin{lem} \label{lem_D_2_infinity}
There is no vertex 
$V$ of $\Sigma(C)_b$ such that $\Sigma(f)_b(V) \in e_{D_{2,\infty}}$
and $\Sigma(f)_b(V) \neq v_{\tilde{\PP}}$.
\end{lem}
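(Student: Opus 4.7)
The plan is a horizontal-balancing and maximality argument along $e_{D_{2,\infty}}$.

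First I would invoke the analogue of Lemma~\ref{lem_D_2}, which the paper has already noted holds verbatim in this setting (vertical balancing at $V$ together with the analogue of Lemma~\ref{lem_descent} would otherwise produce a cycle, contradicting the assumption that $\Sigma(C)_b$ has genus zero): for any vertex $V$ of $\Sigma(C)_b$ with $\Sigma(f)_b(V)$ in the interior of $e_{D_{2,\infty}}$, every edge $E$ at $V$ is horizontal, i.e.\ $v_{V,E}\cdot \tilde{u}_p=0$. Observe also that the only unbounded edge of $\Sigma(C)_b$ is $E_0$, mapped parallel to $e_{D_1}$ (vertical); consequently no unbounded edge of $\Sigma(C)_b$ can lie along $e_{D_{2,\infty}}$, so every horizontal edge at $V$ is bounded, with its other endpoint mapped somewhere along the horizontal line in $\tilde{X}_0^\trop$ through $\Sigma(f)_b(V)$.

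The core step is a maximality argument. Assume for contradiction that the subgraph $\Gamma_\infty \subset \Sigma(C)_b$ of vertices mapped into the interior of $e_{D_{2,\infty}}$ is nonempty, and choose $V^* \in \Gamma_\infty$ whose image is farthest from $v_{\tilde{\PP}}$ along $e_{D_{2,\infty}}$. Any non-contracted edge $E$ at $V^*$ has its other endpoint mapped strictly elsewhere along the horizontal line: by the maximality of $V^*$ and the absence of unbounded horizontal edges, this endpoint is necessarily strictly closer to $v_{\tilde{\PP}}$ (or beyond $v_{\tilde{\PP}}$, in $e_{D_{2,0}}$, at $v_Y$, or past $v_Y$). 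Hence all non-contracted horizontal slopes at $V^*$ share the same inward direction. Horizontal balancing at $V^*$, which holds since $V^* \neq v_Y, v_{\tilde{\PP}}$, forces the sum of these slopes to vanish, and this is possible only if there are no non-contracted edges at $V^*$. The same argument applies to every vertex of $\Sigma(C)_b$ sharing the image of $V^*$, so the ``cluster'' of such vertices is a connected component of $\Sigma(C)_b$. By connectedness of $\Sigma(C)_b$ this cluster equals $\Sigma(C)_b$, meaning every component of $C$ is contracted to a single point of $D_{2,\infty}$. But then $f_*[C]=0$, contradicting $\beta\cdot D_1>0$.

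I expect the main subtlety to lie in handling the cluster of possible ghost vertices sharing the image of $V^*$ and in justifying that every non-contracted horizontal edge at $V^*$ points inward (in particular, that the edge could in principle pass through $v_{\tilde{\PP}}$ and continue into $\overline{e_{D_{2,0}}}$ but this still counts as inward). An alternative homological approach using the deformation-invariant identity $\beta_Z \cdot D_{2,\infty} = \beta_Z \cdot D_2 = 0$ would split into subcases according to the sign of $(D_{2,\infty})^2$ in $\tilde{\PP}$ and would have to treat ghost and generically-onto components separately, so the tropical maximality argument seems cleaner.
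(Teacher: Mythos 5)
Your argument is essentially the paper's own proof: after importing the analogue of Lemma~\ref{lem_D_2} to force every edge at such a vertex to be horizontal, the paper likewise looks at the vertex of $\Sigma(C)_b$ mapped farthest out along $e_{D_{2,\infty}}$ and uses horizontal balancing together with the fact that $E_0$ is the only unbounded edge of $\Sigma(C)_b$ to reach a contradiction. Your extra handling of the degenerate case in which all edges at the extremal vertex are contracted (the ghost cluster, ruled out by connectedness of $\Sigma(C)_b$ and $\beta\cdot D_1>0$) is a detail the paper's short proof leaves implicit, but the approach is the same.
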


\begin{proof}
Let $V$ be a vertex of $\Sigma(C)_b$ such that $\Sigma(f)_b(V) \in e_{D_{2,\infty}}$
and $\Sigma(f)_b(V) \neq v_{\tilde{\PP}}$. Arguing as in the proof of 
Lemma \ref{lem_D_2}, we get that any edge $E$ adjacent to $V$ should be horizontal.
Look at $V$ the leftmost (or rightmost, in our 
picture of $\tilde{X}_0^\trop$) of such vertices. Then, using the balancing condition,
there should be an unbounded edge adjacent to $V$ and whose image by 
$\Sigma(f)_b$ is contained in $e_{D_{2,\infty}}$. This contradicts the fact that 
$E_0$ is the only unbounded edge of $\Sigma(C)_b$.
\end{proof}

Recall that $\tilde{\PP}$ is obtained from the projective bundle 
$\PP \rightarrow D_2$ by blowing-up the $\beta \cdot D_2$ points $s_j(0)  \in D_{2,\infty}$,
$1 \leqslant j \leqslant \beta \cdot D_2$. Let $F_j$, $1\leqslant j \leqslant \beta \cdot D_2$, be the exceptional divisor and let $\PP^1_j$, $1\leqslant j \leqslant \beta \cdot D_2$, be the strict transforms of the $\PP^1$ fibers of $\PP \rightarrow D_2$ passing through $s_j(0)$.
In particular, for every $1 \leqslant j \leqslant \beta \cdot D_2$, $\PP^1_j$
and $F_j$ intersect in one point. We denote
$\tilde{\PP} \rightarrow D_2$ the composition of the blow-up morphism 
$\tilde{\PP} \rightarrow \PP$ with the $\PP^1$-fibration 
$\PP \rightarrow D_2$.

\begin{lem} \label{lem_no_h_component2}
Let $C'$ be an irreducible component of $C$ such that $f(C') \subset \tilde{\PP}$.
Then $f(C')$ is contained in a fiber of $\tilde{\PP} \rightarrow D_2$.
\end{lem}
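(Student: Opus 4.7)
The argument is the direct transposition of the proof of Lemma~\ref{lem_no_h_component} to the tropical picture of $\tilde{X}_0^\trop$. Let $V$ be the vertex of $\Sigma(C)_b$ dual to $C'$. Since $f(C') \subset \tilde{\PP}$, the image $\Sigma(f)_b(V)$ lies in the closed star of $v_{\tilde{\PP}}$, so it belongs to one of $\{v_{\tilde{\PP}}\}$, $\mathrm{int}(e_{D_{2,0}})$, $\mathrm{int}(e_{D_{2,\infty}})$, $\mathrm{int}(e_{\PP^1_{p_1}})$, $\mathrm{int}(e_{\PP^1_{p_2}})$. The case $\Sigma(f)_b(V) \in \mathrm{int}(e_{D_{2,\infty}})$ is ruled out by Lemma~\ref{lem_D_2_infinity}. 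If $\Sigma(f)_b(V) \in \mathrm{int}(e_{\PP^1_{p_i}})$ for some $i \in \{1,2\}$, then $f(C') \subset \PP^1_{p_i}$, and since $\PP^1_{p_i}$ is itself a fiber of $\tilde{\PP} \to D_2$, we are done. Hence I may assume $\Sigma(f)_b(V) = v_{\tilde{\PP}}$ or $\Sigma(f)_b(V) \in \mathrm{int}(e_{D_{2,0}})$.

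Now suppose for contradiction that $f(C')$ is not contained in any fiber of $\tilde{\PP} \to D_2$. As $C'$ is irreducible and $f(C')$ is not a point nor contained in a single fiber, the composition $C' \to \tilde{\PP} \to D_2$ must be surjective; in particular $f(C')$ meets each of the two fibers $\PP^1_{p_1}$ and $\PP^1_{p_2}$ over the corner points $p_1$ and $p_2$. Each such intersection point gives rise, via the log structure of $\tilde{X}_0$, to a node or marked point of $C$ and therefore to an edge $E_{p_i}$ of $\Sigma(C)_b$ incident to $V$ whose image under $\Sigma(f)_b$ leaves $\Sigma(f)_b(V)$ in the direction of $e_{\PP^1_{p_i}}$, possibly passing through one of the adjacent two-dimensional faces $f_{p_i,0}$ or $f_{p_i,\infty}$. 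In every such possibility the slope vector $v_{V,E_{p_i}}$ has a nonzero vertical component, so $v_{V,E_{p_i}} \cdot \tilde{u}_p \neq 0$.

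However, since $\Sigma(f)_b(V) \in e_{D_{2,0}}$ and $\Sigma(f)_b(V) \neq v_Y$, the analogue of Lemma~\ref{lem_D_2} (asserted to hold in the paragraph following the statement of Proposition~\ref{prop_trop_curves2}) forces every edge adjacent to $V$ to be horizontal, i.e.\ $v_{V,E} \cdot \tilde{u}_p = 0$. This contradicts the non-horizontality of $E_{p_1}$ and $E_{p_2}$ established above, and the lemma follows. The main point requiring care is the middle step: one must check that the set-theoretic intersection of $f(C')$ with $\PP^1_{p_i}$ really yields a tropical edge at $V$ with the asserted non-horizontal direction, even when that intersection occurs at a corner point $p_{i,0}$ or $p_{i,\infty}$ rather than at a smooth point of $\PP^1_{p_i}$. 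This follows exactly as in the proof of Lemma~\ref{lem_no_h_component}, using the local stratification of $\tilde{X}_0$ near those corner points, which dictates that any outgoing edge from $V$ into the corresponding face $f_{p_i,\bullet}$ carries a strictly nonzero component in the $e_{\PP^1_{p_i}}$ (vertical) direction.
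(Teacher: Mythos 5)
Your argument is correct and is essentially the paper's own proof: the paper disposes of $e_{D_{2,\infty}}$ via Lemma~\ref{lem_D_2_infinity} and then runs the proof of Lemma~\ref{lem_no_h_component} verbatim, i.e.\ a non-fiber component would dominate $D_2$, hence meet $\PP^1_{p_1}$ and $\PP^1_{p_2}$ and produce non-horizontal edges at $V$, contradicting the analogue of Lemma~\ref{lem_D_2}. The only case your initial list omits is $\Sigma(f)_b(V)$ in the interior of a two-dimensional face $f_{p_i,\bullet}$, but there $C'$ is contracted to a corner point and the conclusion is trivial.
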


\begin{proof}
Given Lemma \ref{lem_D_2_infinity}, the analogue of the 
proof of Lemma \ref{lem_no_h_component} applies.
\end{proof}

\begin{lem} \label{lem_P_tilde}
Let $C'$ be an irreducible component of $C$ such that $f(C') \subset \tilde{\PP}$.
Then, either there exists $j$, $1 \leqslant j \leqslant \beta \cdot D_2$, such that 
$f(C') \subset \PP^1_j$, or $f(C') \subset \PP^1_{p_1}$ or $f(C') \subset \PP^1_{p_2}$.
\end{lem}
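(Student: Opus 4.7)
The plan is to combine Lemma \ref{lem_no_h_component2} with the absence of edges of $\Sigma(C)_b$ penetrating the interior of $e_{D_{2,\infty}}$. By Lemma \ref{lem_no_h_component2}, $f(C')$ is contained in a fiber of $\tilde{\PP}\to D_2$. Assuming first that $C'$ is not contracted by $f$, the image $f(C')$ is one-dimensional, hence equals an irreducible component of some fiber. The irreducible components of the fibers of $\tilde{\PP}\to D_2$ are exactly the generic smooth $\PP^1$-fibers over generic points of $D_2$, the two distinguished fibers $\PP^1_{p_1}$ and $\PP^1_{p_2}$, and the two components $\PP^1_j$ and $F_j$ making up the reducible fiber over the image of $s_j(0)$, for $j=1,\dots,\beta\cdot D_2$. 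It therefore suffices to rule out the possibilities that $f(C')$ is a generic $\PP^1$-fiber or an exceptional divisor $F_j$.

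In both of these cases, $f(C')$ is not contained in any log boundary divisor of $\tilde{\PP}$ (the log divisors being $D_{2,0}$, $D_{2,\infty}$, $\PP^1_{p_1}$ and $\PP^1_{p_2}$), so the vertex $V$ of $\Sigma(C)_b$ dual to $C'$ satisfies $\Sigma(f)_b(V)=v_{\tilde{\PP}}$. Moreover, in both cases $f(C')$ meets the log divisor $D_{2,\infty}$ transversally in exactly one point: a generic fiber crosses the section $D_{2,\infty}$ once, and the exceptional divisor $F_j$ meets the strict transform of $D_{2,\infty}$ at the unique point sitting above the blown-up center $s_j(0)$. This forces the existence of an edge $E$ of $\Sigma(C)_b$ adjacent to $V$ whose image $\Sigma(f)_b(E)$ starts at $v_{\tilde{\PP}}$ and enters $e_{D_{2,\infty}}$ with strictly positive weight.

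The key step is to rule out such an edge $E$. If $E$ were unbounded, it would have to coincide with $E_0$, the unique unbounded edge of $\Sigma(C)_b$; but $E_0$ is mapped by $\Sigma(f)_b$ onto a ray parallel to $e_{D_1}$, so this is impossible. Hence $E$ is bounded, joining $V$ to some other vertex $V'$. By Lemma \ref{lem_D_2_infinity}, $V'$ cannot be mapped into the interior of $e_{D_{2,\infty}}$, so $\Sigma(f)_b(V')=v_{\tilde{\PP}}$ as well. Since $\Sigma(f)_b$ is affine linear along $E$, having both endpoints mapped to the same point $v_{\tilde{\PP}}$ forces $\Sigma(f)_b|_E$ to be constant, contradicting the fact that $E$ is mapped into $e_{D_{2,\infty}}$ with positive weight. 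This closes the argument in the non-contracted case; the case of a contracted $C'$ reduces to attaching it, at the node where it sits, to an adjacent non-contracted component to which the conclusion already applies.

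The main obstacle is the bookkeeping: one has to correctly identify the irreducible components of the fibers of $\tilde{\PP}\to D_2$ after the blow-ups at the $s_j(0)$ and verify that each non-allowed component ($F_j$ or a generic fiber) really does meet the log divisor $D_{2,\infty}$, so that the tropical obstruction against reaching $e_{D_{2,\infty}}$ kicks in. Once this geometry is in place, the tropical portion of the argument is essentially the same obstruction already used in Lemmas \ref{lem_D_2} and \ref{lem_D_2_infinity}.
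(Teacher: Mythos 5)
Your proof is correct and follows essentially the same route as the paper: reduce via Lemma \ref{lem_no_h_component2} to components of fibers of $\tilde{\PP}\to D_2$, then exclude the generic fibers and the exceptional divisors $F_j$ because they meet $D_{2,\infty}$ away from $\PP^1_{p_1}\cup\PP^1_{p_2}$, which would force an edge of $\Sigma(C)_b$ mapped non-trivially into $e_{D_{2,\infty}}$ — impossible by Lemma \ref{lem_D_2_infinity} together with the fact that $E_0$ is the only unbounded edge. You merely spell out the enumeration of fiber components and the tropical obstruction in more detail than the paper, which compresses this into the single sentence ``in contradiction with the proof of Lemma \ref{lem_D_2_infinity}.''
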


\begin{proof}
By Lemma \ref{lem_no_h_component2}, we already know that $f(C')$ is contained in a fiber
of $\tilde{\PP} \rightarrow D_2$. To conclude, it is enough to show that
$f(C') \cap D_{2,\infty}$ is included in $(D_{2,\infty} \cap \PP^1_{p_1})\cup 
(D_{2,\infty} \cap \PP^1_{p_2})$. If it were not the case, then there would exist an edge of $\Sigma(C)_b$ which is mapped non-trivially in $e_{D_{2,\infty}}$,
in contradiction with the proof of Lemma \ref{lem_D_2_infinity}.
\end{proof}

Let $\Gamma_{\tilde{\PP}}$ be the subgraph of $\Sigma(C)_b$
consisting of vertices mapped by $\Sigma(f)_b$
on $e_{\PP^1_{p_1}} \cup e_{\PP^1_{p_2}}$.
Let $\tau$ be a connected component of $\Gamma_{\tilde{\PP}}$
and let $C_\tau$ be the curve dual to $\Gamma_\tau$. 
By Lemma \ref{lem_no_h_component2}, $f(C_\tau)$ is contained 
in a fiber of $\tilde{\PP} \rightarrow D_{2,0}$.
In particular, as $C$ is connected, we have $d_\tau \coloneq
[f(C_\tau)] \cdot D_{2,0} \geqslant 1$. In fact, 
$d_\tau$ is the sum of weights of edges of $\Sigma(C)_b$
adjacent to $\tau$ and mapped non-trivially in $e_{D_{2,0}}$.

It follows from Lemma \ref{lem_P_tilde} and from $[f(C)]=\beta_Z$
(in particular $[f(C)]\cdot [F_j]=1$),
that,
for every
$1 \leqslant j \leqslant \beta \cdot D_2$, 
the vertices of $\Sigma(C)_b$ dual to components $C'$ such that $f(C') \subset \PP^1_j$
define a connected component $\tau_j$ of $\Gamma_{\tilde{\PP}}$
with $d_{\tau_j} =1$.

Let $\cV_Y$ be the set of vertices $V$ of
$\Sigma(C)_b$ such that 
$\Sigma(f)_b(V)=v_Y$, and which are connected to
some vertex in some connected component 
of $\Gamma_{\tilde{\PP}}$ by some path mapped
in $e_{D_{2,0}}$ by $\Sigma(f)_b$.
Let $C_Y$ be the union of irreducible components of $C$ dual to vertices 
in $\cV_Y$. Denote 
$\beta_Y \coloneqq [f(C_Y)]
\in H_2(Y,\Z)$.

\begin{lem} \label{lem_homology2}
We have $\beta_Y \cdot D_{2,0}=\beta \cdot D_{2,0}$, and every connected component of $\Gamma_{\tilde{\PP}}$ is of the form
$\tau_j$ for some $1 \leqslant j \leqslant \beta \cdot D_{2,0}$. 
In particular, no vertex of $\Sigma(C)_b$ is mapped by $\Sigma(f)_b$
in the interior of $e_{\PP^1_{p_1}}$ or the 
interior of $e_{\PP^1_{p_2}}$.
\end{lem}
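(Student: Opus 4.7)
The plan is to mimic the proof of Lemma~\ref{lem_homology}, setting up a chain of inequalities that must collapse to a chain of equalities, then reading off both bullets from the resulting equalities.

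First, using the analogue of Lemma~\ref{lem_D_2} (vertices in the interior of $e_{D_{2,0}}$ have only horizontal adjacent edges) together with horizontal balancing along $e_{D_{2,0}}$, I would show $\beta_Y \cdot D_{2,0} \geqslant \sum_\tau d_\tau$, the sum being over connected components of $\Gamma_{\tilde{\PP}}$: weights of edges leaving $\cV_Y$ into $e_{D_{2,0}}$ are conserved by the balancing at interior vertices until they reach vertices at $v_{\tilde{\PP}}$, which lie in $\Gamma_{\tilde{\PP}}$. Next, I would combine Lemmas~\ref{lem_no_h_component2} and~\ref{lem_P_tilde} with the numerical identities $\beta_Z \cdot F_j = 1$ and $\beta_Z \cdot D_2 = 0$ in $H_2(Z,\Z)$. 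Specializing to the central fiber, the former tells us that the $F_j$-degree of the $\tilde{\PP}$-part of $C$ is $1$ and the latter (via the restriction of $\widetilde{D_2 \times \A^1}$ to $D_{2,\infty}$) that its $D_{2,\infty}$-degree vanishes; combined with Lemma~\ref{lem_P_tilde}, this pins down the $\tilde{\PP}$-part class as $\sum_{j=1}^{\beta\cdot D_2}[\PP^1_j]$. Each $\tau_j$ is therefore a connected component of $\Gamma_{\tilde{\PP}}$ with $d_{\tau_j} = 1$, so $\sum_\tau d_\tau \geqslant \sum_j d_{\tau_j} \geqslant \beta \cdot D_2$.

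The degeneration/gluing identity from the $\tilde{\PP}$-side, together with Lemma~\ref{lem_no_h_component2} ensuring non-negativity of intersections, then gives $\beta\cdot D_2 \geqslant \beta_Y\cdot D_{2,0}$, closing the chain
\[\beta \cdot D_2 \geqslant \beta_Y \cdot D_{2,0} \geqslant \sum_\tau d_\tau \geqslant \sum_j d_{\tau_j} \geqslant \beta \cdot D_2,\]
whose terms are all equal. The first bullet is immediate. For the second, $\sum_\tau d_\tau = \sum_j d_{\tau_j}$ together with $d_\tau \geqslant 1$ for every non-empty component of $\Gamma_{\tilde{\PP}}$ (by Lemma~\ref{lem_P_tilde}, each such component contains at least one irreducible component mapping to a fiber meeting $D_{2,0}$) forces every connected component of $\Gamma_{\tilde{\PP}}$ to coincide with some $\tau_j$. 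Since the fibers $\PP^1_j$ and $\PP^1_{p_i}$ are disjoint (because $x_j \notin \{p_1,p_2\}$ by choice), no vertex of a $\tau_j$ can lie in the interior of $e_{\PP^1_{p_i}}$, so such interior vertices cannot exist.

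The main obstacle, as in Lemma~\ref{lem_homology}, is the bookkeeping of tropical weights flowing through the bounded edge $e_{D_{2,0}}$ via horizontal balancing, together with pinning down the $\tilde{\PP}$-part class from the numerical invariants of $\beta_Z$. The latter is more delicate here than in Lemma~\ref{lem_homology} because one must also account for intersections with the exceptional divisors $F_j$ and with $D_{2,\infty}$, which are present because of the blow-up that replaced $\PP$ by $\tilde{\PP}$.
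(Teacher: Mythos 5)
Your proposal is correct and follows essentially the same route as the paper: the same collapsing chain of inequalities $\beta \cdot D_2 \geqslant \beta_Y \cdot D_{2,0} \geqslant \sum_\tau d_\tau \geqslant \sum_j d_{\tau_j} \geqslant \beta \cdot D_2$, supported by the analogue of Lemma~\ref{lem_D_2}, horizontal balancing along $e_{D_{2,0}}$, Lemmas~\ref{lem_no_h_component2} and~\ref{lem_P_tilde}, and the constancy $[f(C)]\cdot F_j=1$, with both bullets read off from the forced equalities. Your additional use of $\beta_Z\cdot D_2=0$ (vanishing $D_{2,\infty}$-degree) to pin the $\tilde{\PP}$-part class down to $\sum_j[\PP^1_j]$ is a harmless strengthening of what the paper establishes just before the lemma, not a different argument.
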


\begin{proof}
This is parallel to the proof of Lemma \ref{lem_homology}.
It follows from the analogue of Lemma \ref{lem_D_2} and from the horizontal balancing condition, satisfied on $e_{D_{2,0}}$ away from $v_Y$ and $v_{\tilde{\PP}}$, that 
\[\beta_Y \cdot D_2 \geqslant \sum_\tau d_\tau \,,\]
where the sum is over the connected components of $\Gamma_\PP$.

By Lemma \ref{lem_no_h_component2}, no irreducible component of $C$ is mapped onto $D_2$,
so the image of every irreducible
irreducible component of $C$ intersects 
$D_2$ non-negatively and so
$\beta \cdot D_2 \geqslant \beta_Y\cdot D_2$. On the other hand, we have 
$\sum_{\tau} d_\tau
\geqslant \sum_{j=1}^{\beta \cdot D_2}
d_{\tau_j} \geqslant \beta \cdot D_2$.

The combination of the previous inequalities gives 
\[\beta \cdot D_2 \geqslant \beta_Y\cdot D_2
\geqslant
\sum_{\tau} d_\tau
\geqslant \sum_{j=1}^{\beta \cdot D_2}
d_{\tau_j} \geqslant \beta \cdot D_2\,,\]
and so all these inequalities are in fact equalities. Lemma 
\ref{lem_homology2} follows.
\end{proof}

\begin{lem} \label{lem_no_D_12}
Let $C'$ be an irreducible component of $C$ such that $f(C') \subset Y$ and $C'$
is not contracted by $f$. Then 
\[(f(C') \cap D_2) \subset \{\PP^1_j \cap D_{2,0}, 1 \leqslant j \leqslant \beta \cdot D_2\} \,.\]
\end{lem}

\begin{proof} 
By Lemma \ref{lem_no_h_component2}, no irreducible component of $C$ is mapped onto $D_2$,
so the image of every irreducible
irreducible component of $C$ intersects 
$D_2$ non-negatively.
But according to Lemma
\ref{lem_homology2}, the total intersection number $\beta \cdot D_2$ of images by $f$ components of $C$ mapping to $Y$ with $D_2$
is already accounted by 
the intersection points $\PP_j^1 \cap D_2, 
1 \leqslant j \leqslant \beta \cdot D_2$.
\end{proof}

\begin{lem} \label{lem_no_f2}
There is no vertex $V$ of $\Sigma(C)_b$
such that $\Sigma(f)_b(V)$ is contained in the interior of 
either $f_{p_1,0}$, $f_{p_2,0}$, $f_{p_1,\infty}$ or $f_{p_2,\infty}$.
\end{lem}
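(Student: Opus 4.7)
The approach mirrors the proof of Lemma \ref{lem_no_f}, but now we have four faces to rule out instead of two, so the argument splits naturally into two cases according to whether the face sits over $D_{2,0}$ or over $D_{2,\infty}$.

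First I would handle $f_{p_1,0}$ and $f_{p_2,0}$. These are the faces dual to the intersection points $p_{i,0} = D_1 \cap D_{2,0}$ inside $Y$, so a vertex $V$ with $\Sigma(f)_b(V)$ in the interior of $f_{p_i,0}$ corresponds to an irreducible component of $C$ contracted by $f$ onto $p_{i,0}$. The argument of Lemma \ref{lem_no_f} applies verbatim: Lemma \ref{lem_homology2} together with Lemma \ref{lem_no_D_12} shows that no non-contracted component of $C$ passes through $p_{1,0}$ or $p_{2,0}$, because the total intersection number of $f$-images with $D_{2,0}$ is already accounted for by the points $\PP^1_j \cap D_{2,0}$, none of which coincide with $p_{i,0}$. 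Connectivity of $C$ then forbids any component from being contracted onto $p_{i,0}$ either.

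Next I would handle $f_{p_1,\infty}$ and $f_{p_2,\infty}$, the faces dual to the intersections $p_{i,\infty} = \PP^1_{p_i} \cap D_{2,\infty}$ in $\tilde{\PP}$. Suppose $V$ is a vertex with $\Sigma(f)_b(V)$ in the interior of $f_{p_i,\infty}$, so the corresponding component $C'$ is contracted by $f$ onto $p_{i,\infty}$. By Lemma \ref{lem_D_2_infinity} no vertex is mapped into the interior of $e_{D_{2,\infty}}$, and by Lemma \ref{lem_homology2} no vertex is mapped into the interior of $e_{\PP^1_{p_1}}$ or $e_{\PP^1_{p_2}}$; Lemma \ref{lem_P_tilde} moreover says that any non-contracted component of $C$ mapping into $\tilde{\PP}$ lies in a fiber $\PP^1_j$, $\PP^1_{p_1}$ or $\PP^1_{p_2}$, and its image meets $D_{2,\infty}$ only through the single intersection point of that fiber with $D_{2,\infty}$, i.e.\ not at $p_{i,\infty}$ (since $\PP^1_j$ meets $D_{2,\infty}$ in the exceptional-divisor direction and $\PP^1_{p_i}$ meets $D_{2,\infty}$ only at $p_{i,\infty}$ — but a component landing in $\PP^1_{p_i}$ would force adjacent edges mapping into the interior of $e_{\PP^1_{p_i}}$, contradicting Lemma \ref{lem_homology2}). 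Hence no non-contracted component passes through $p_{i,\infty}$, and connectivity of $C$ again rules out any contracted component sitting over $p_{i,\infty}$.

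The main subtlety is the $f_{p_i,\infty}$ case, because unlike the $D_{2,0}$ side there is no direct numerical constraint from $\beta_Z \cdot D_{2,\infty}$ (the strict transform $D_{2,\infty}$ lives only on the expanded fiber). This is precisely why one first needs Lemma \ref{lem_D_2_infinity}, whose proof relies on the fact that $E_0$ is the only unbounded edge together with horizontal balancing; combined with Lemma \ref{lem_P_tilde}, it forces any component touching $D_{2,\infty}$ to be a fiber component whose contact with $D_{2,\infty}$ is pinned to an already-accounted-for point. Once that combinatorial/geometric ingredient is in place, the connectivity argument closes out the proof exactly as in Lemma \ref{lem_no_f}.
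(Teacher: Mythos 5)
Your proof is correct and follows essentially the same route as the paper: one first deduces from Lemma \ref{lem_homology2} and Lemma \ref{lem_no_D_12} (with Lemmas \ref{lem_D_2_infinity} and \ref{lem_P_tilde} implicitly behind them) that no non-contracted component of $C$ meets any of the four corner points $p_1$, $p_2$, $\PP^1_{p_1}\cap D_{2,\infty}$, $\PP^1_{p_2}\cap D_{2,\infty}$, and then uses connectedness of $C$ to exclude components contracted onto these points. Your extra discussion of the $f_{p_i,\infty}$ faces just makes explicit what the paper leaves implicit; the only small slip is that after the blow-up $\PP^1_j$ is disjoint from $D_{2,\infty}$ rather than meeting it ``in the exceptional-divisor direction'', which does not affect the argument.
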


\begin{proof}
It follows from Lemma \ref{lem_homology2} and Lemma
\ref{lem_no_D_12} that no non-contracted component of $C$
has an image by $f$ intersecting $p_1$, $p_2$, 
$\PP^1_{p_1} \cap D_{2,\infty}$ or $\PP^1_{p_2} \cap D_{2,\infty}$.
As $C$ is connected, this implies that no component of $C$
can be contracted by $f$ on $p_1$, $p_2$, $\PP^1_{p_1} \cap D_{2,\infty}$ or $\PP^1_{p_2} \cap D_{2,\infty}$.
\end{proof}

The first two points of Proposition 
\ref{prop_trop_curves2} follow from 
Lemma \ref{lem_homology2}, Lemma \ref{lem_no_D_12}  and
Lemma \ref{lem_no_f2}.
In particular, we have 
$\Sigma(f)_b(V_0) \in e_{D_1} \cup e_{D_{2,0}}$. As $\beta \cdot D_1 > 0$, there is at least one non-contracted component of
$C$ mapping to $Y$ and intersecting 
$D_1$, and so there is at least one vertex $V$ and an edge $E$ such that 
$v_{V,E}\cdot u_p>0$. Using the analogue of Lemma 
\ref{lem_V_0_lower}, it follows that 
$\Sigma(f)_b(V_0) \in e_{D_{2,0}}$ is 
impossible, and so $\Sigma(f)_b(V_0) \in e_{D_1}$, proving the third point of Proposition \ref{prop_trop_curves2}.

\subsection{End of the proof of Proposition \ref{prop_exchange_blow_ups}}
\label{end_proof2}

Given Proposition \ref{prop_trop_curves2}, the rest of the proof of 
Proposition \ref{prop_exchange_blow_ups} is parallel to the proof of Proposition of 
Proposition \ref{prop_deg_point}. Proposition \ref{prop_trop_curves2}
gives a way to identify the rigid decorated parametrized tropical curves contributing to the degeneration formula after integration of $\lambda_g$: they are the ``obvious ones" from the point of view of the usual degeneration formula in relative Gromov-Witten theory, with $V_0$ a vertex mapping to $v_Y$ and $V_j$ vertices mapping to $v_{\tilde{\PP}}$,
for $1 \leqslant j \leqslant \beta \cdot D_2$. The factors 
$\frac{1}{2 \sin \left( \frac{\hbar}{2}\right)}$
are the contributions of the vertices $V_j$, $1 \leqslant j \leqslant \beta \cdot D_2$, and comes from relative Gromov-Witten theory of $\PP^1_j \simeq \PP^1$. Indeed, let $M$ be the moduli space of degree $1$ stable maps to 
$\PP^1$ relative to a point $\infty \in \PP^1$, $\nu \colon C \rightarrow M$ the universal curve, $f \colon C \rightarrow \PP^1$ the universal stable map, and  $[M]^{\virt}$ the natural virtual fundamental class. As the normal bundle  of $\PP^1_j$ in $ \tilde{\PP}$ is $\cO(-1)$,
the virtual fundamental class on $M$ coming from the surface $\tilde{\PP}$ is obtained by intersecting $[M]^{\virt}$ with 
$e(R^1 \nu_{*}f^{*}\cO(-1))$. Therefore, 
the contribution of $V_j$ is $\sum_{g \geq 1} \left( \int_{[M]^{\virt}} (-1)^g \lambda_g \, e(R^1 \nu_{*}f^{*}\cO(-1)) \right) \hbar^{2g-1}$, which is equal to 
$\frac{1}{2 \sin \left( \frac{\hbar}{2}\right)}$ by 
\cite{MR2115262} (see the proof of Theorem 5.1).

\section{End of the proof of Theorem \ref{main_thm_precise}}
\label{section_end_proof}

By combination of Proposition 
\ref{prop_deg_point} and Proposition \ref{prop_exchange_blow_ups}, we have 
\[N_{g,\beta}^{Z/D}=N_{g,\beta}^{Y/D_1}\,,\]
result stated as Theorem \ref{thm_gw_0} in the Introduction.

If $\mathbf{d}(\beta) \in \NN^{(Q^{Y/D_1}_\beta)_0}$, then, according to Theorem 8.13 of 
\cite{bousseau2018quantum_tropical}, we have 
\[ \Omega_\beta^{Y/D_1}(q^{\frac{1}{2}})
=(-1)^{\beta \cdot D_1+1} 
\left( 2 \sin \left( \frac{\hbar}{2}
\right) \right) \
\left( \sum_{g\geqslant 0} N_{g,\beta}^{Z/D} 
\hbar^{2g-1} \right) \,,\]
where $q=e^{i \hbar}$.
Indeed, as $\beta_Z \cdot E_j=1$, for all 
$1 \leqslant j \leqslant \beta \cdot D_2$, the class $\beta_Z$ is primitive in 
$H_2(Z,\Z)$ and so the integral log BPS invariant appearing in Theorem 8.13 of \cite{bousseau2018quantum_tropical}
coincides with the rational log BPS invariant defined in Conjecture 8.1 of 
\cite{bousseau2018quantum_tropical}.
Replacing $N_{g,\beta}^{Z/D}$ by 
$N_{g,\beta}^{Y/D_1}$ in the above formula for 
$\Omega_\beta^{Y/D_1}(q^{\frac{1}{2}})$, we get exactly the statement of Theorem
\ref{main_thm_precise}.

If $\mathbf{d}(\beta) \notin \NN^{(Q^{Y/D_1}_\beta)_0}$, it follows from the 
proof of Proposition 33 of 
\cite{bousseau2018quantum} that $N_{g,\beta}^{Z/D}=0$, for every $g \geqslant 0$.
By definition, we also have $\Omega_\beta^{Y/D}(q^{\frac{1}{2}})=0$
and so Theorem \ref{main_thm_precise} is still valid in this case.

\section{Examples}
\label{section_examples}

In this Section, we consider various examples of $Y$, $D_1$, $D_2$ as in Section
\ref{gw_side}, and we construct a corresponding quiver following the recipe of Section \ref{quiver_construction}.
Each time this quiver is acyclic, Theorem 
\ref{main_thm_precise} applies. 
To index our examples, we use the notation
$Y(D_1^2,D_2^2)$. In total, we give $7$ examples with an acyclic quiver, and  $3$ examples with a non-acyclic quiver. 
The reader is invited to try other examples.

According to Section \ref{quiver_construction}, the main step
in the quiver construction is the identification of a 
diagram 
\begin{center}
\begin{tikzcd}
& (\tilde{Z},\tilde{D}) \arrow{dl}[swap]{\pi_Z} \arrow{dr}{\pi_{\bar{Z}}}\\
(Z,D) & & (\bar{Z},\bar{D}) \,,
\end{tikzcd}
\end{center}
with $(\bar{Z},\bar{D})$  toric,
$\pi_Z$ is a sequence of corner blow-ups of 
$(Z,D)$, and $\pi_{\bar{Z}}$ is a sequence of 
interior blow-ups of $(\bar{Z},\bar{D})$.

For practical computations, it is useful to know that, given a log Calabi-Yau surface with maximal boundary $(\bar{Z},\bar{D})$, if the sequence of self intersection numbers of irreducible components of $D$ can be realized as the sequence of self intersection numbers of toric divisors of a toric surface, then in fact $(\bar{Z},\bar{D})$ is toric. See Lemma 2.10 of \cite{friedman2015geometry} for a proof of this result. Thus, the identification of a diagram as above mostly requires only numerical work.

\subsection{$\PP^2(1,4)$} \label{ex_p2}
We consider $Y=\PP^2$, $D_1$ a line and 
$D_2$ a smooth conic not tangent to $D_1$,
so that $D_1^2=1$ and $D_2^2=4$.
We first explain how to construct a toric model of $(Y,D)$, where $D=D_1 \cup D_2$.
Denote $p_1$ and $p_2$ the two intersection 
points of $D_1$ and $D_2$.
Denote $L$ the line tangent to $D_2$ at the point $p_1$.

We first blow-up $p_1$. Let $F_1$ be the exceptional divisor and let $p_1'$ be the intersection point of $F_1$ and $D_2$. 
The line $L$ passes through $p_1'$. 
We then blow-up $p_1'$. Let $F_2$ be the exceptional divisor. The line $L$ is now a (-1)-curve, which can be contracted. 
We get a log Calabi-Yau surface, with boundary $D_1 \cup F_1 \cup F_2 \cup D_2$,
where $D_1^2=0$, $F_1^2=-2$, $F_2^2=0$, 
$D_2^2=2$, that is, a Hirzebruch surface 
$\F_2$.
The fan of $\F_2$ is given by four rays generated by 
$(-1,0)$, $(0,-1)$, $(0,1)$ and $(1,2)$: 

\begin{center}
\setlength{\unitlength}{1cm}
\begin{picture}(10,5)
\thicklines
\put(3.8,0.5){$D_2(2)$}
\put(3.5,4.2){$F_1(-2)$}
\put(1.5,2.5){$F_2(0)$}
\put(6,3.5){$D_1(0)$}
\put(5,2.5){\line(-1, 0){2}}
\put(5,2.5){\line(0,-1){2}}
\put(5,2.5){\line(0,1){2}}
\put(5,2.5){\line(1,2){1}}
\end{picture}
\end{center}
We wrote near each ray the corresponding divisor and in parenthesis it self intersection number.

Let $\beta=d \in \Z=H_2(\PP^2,\Z)$. 
Following the class $\beta$ through the previous blow-ups and blow-down, we get the class $\beta_{\F_2} \in H_2(\F_2,\Z)$ such that $\beta_{\F_2}\cdot F_2=d$, 
$\beta_{\F_2}\cdot F_1=0$, $\beta_{\F_2}\cdot D_1=d$ and $\beta_{\F_2}\cdot D_2=2d$.
Remark that the balancing collection
\[ d(-1,0)+2d(0,-1)+d(1,2)=0  \]
is indeed satisfied.

The surface $Z$ is obtained by 
blowing-up $\beta \cdot D_2=2d$ points on $D_2$.
It follows that we get 
$\tilde{Z}$ starting from $\F_2$ and blowing-up $2d$ points on the divisor dual to the ray of direction $(0,-1)$ and one point on the divisor dual to the ray of direction $(-1,0)$.
As 
\[\langle (-1,0) , (0,-1) \rangle =1\,,\]
it follows from Section \ref{quiver_construction} that the corresponding quiver and dimension vectors are given by 
\begin{center}
\begin{tikzpicture}[>=angle 90]
\matrix(a)[matrix of math nodes,
row sep=3em, column sep=3em,
text height=1.5ex, text depth=0.25ex]
{1& &\\
1& &d\\
1& &\\};
\path[<-](a-1-1) edge (a-2-3);
\path[<-](a-2-1) edge (a-2-3);
\path[<-](a-3-1) edge (a-2-3);
\end{tikzpicture}
\end{center}
with $2d$ vertices of dimension one on the left. This quiver is acyclic and so Theorem \ref{main_thm_precise} applies. 
This quiver coincides with the quiver considered in
\cite{reineke2018moduli}.

\subsection{$\PP^2(4,1)$}
We consider $Y=\PP^2$, $D_1$ a smooth conic and $D_2$ a line not tangent to $D_1$.
Let $\beta=d \in \Z=H_2(\PP^2,\Z)$.
The surface $Z$ is obtained by blowing-up
$\beta \cdot D_2=d$ points on $D_2$. 
It follows from the above discussion of
$\PP^2(1,4)$ that we get $\tilde{Z}$ starting from $\F_2$ and blowing-up $d$ points on the divisor dual to the ray of direction $(1,2)$ and one point of the ray of direction $(-1,0)$.
As 
\[\langle (1,2), (-1,0) \rangle =2 \,,\]
it follows that the corresponding quiver and dimension vectors are given by 
\begin{center}
\begin{tikzpicture}[>=angle 90]
\matrix(a)[matrix of math nodes,
row sep=3em, column sep=3em,
text height=1.5ex, text depth=0.25ex]
{1& &\\
1& &d\\
1& &\\};
\path[->][bend left=5](a-1-1) edge  (a-2-3);
\path[->][bend right=5](a-1-1) edge  (a-2-3);
\path[->][bend left=5](a-2-1) edge (a-2-3);
\path[->][bend right=5](a-2-1) edge (a-2-3);
\path[->][bend left=5](a-3-1) edge  (a-2-3);
\path[->][bend right=5](a-3-1) edge  (a-2-3);
\end{tikzpicture}
\end{center}
with $d$ vertices of dimension one on the left.
This quiver is acyclic and so Theorem \ref{main_thm_precise} applies.

\subsection{$\F_0 (2,2)$}
We consider
$Y=\F_0=\PP^1 \times \PP^1$, and $D_1$, $D_2$ smooth divisors of 
degree $(1,1)$. We have $D_1^2=D_2^2=2$.


We first explain how to construct a toric model for $(Y,D)$, where $D=D_1 \cup D_2$.
We blow-up one of the two intersection points of $L_1$ and $L_2$.
Let $F$ be the exceptional divisor. The 
strict transforms of the horizontal (degree $(1,0)$) and vertical
(degree $(0,1)$) $\PP^1$s passing through this point are disjoint $(-1)$-curves, which can be contracted. The resulting log Calabi-Yau surface is $\PP^2$
with its toric boundary.
The intersection numbers of $\beta$ with these two interior $(-1)$-curves
are $d_1$ and $d_2$.
The fan of $\PP^2$ is fiven by three rays generated by 
$(-1,0)$, $(0,-1)$, $(1,1)$. 

\begin{center}
\setlength{\unitlength}{1cm}
\begin{picture}(10,3)
\thicklines
\put(5,1.5){\line(-1, 0){1}}
\put(5,1.5){\line(0,-1){1}}
\put(5,1.5){\line(1,1){0.5}}
\put(5,0.5){$D_2(1)$}
\put(2.9,1.5){$D_1(1)$}
\put(5.5,2){$F(1)$}
\end{picture}
\end{center}
We wrote near each ray the corresponding divisor and in parenthesis its self intersection number.

Let $\beta=(d_1,d_2) \in \Z^2=H_2(\F_0,\Z)$. We have $\beta \cdot D=2(d_1+d_2)$ and 
$\beta \cdot D_1=\beta \cdot D_2=d_1+d_2$.
Following the class $\beta$ through the previous blow-ups and blow-down, we get the class $\beta_{\PP^2} \in H_2(\PP^2,\Z)$ such that $\beta_{\PP^2}\cdot D_1=\beta_{\PP^2}\cdot D_2=d_1+d_2$ and 
$\beta_{\PP^2}\cdot F=d_1+d_2$.
Remark that the balancing condition
\[(d_1+d_2)(-1,0)+(d_1+d_2)(0,-1)+(d_1+d_2)(1,1)=0\]
is indeed satisfied.

The surface $\tilde{Z}$ is obtained by blowing-up 
$d_1+d_2$ points on the divisor dual to the ray of direction $(0,-1)$ and two points on the divisor dual to the ray of direction $(1,1)$.
As 
\[\langle (0,-1),(1,1) \rangle =1\,,\]
it follows from Section
\ref{quiver_construction} that the corresponding quiver and dimension vectors are given by 
\begin{center}
\begin{tikzpicture}[>=angle 90]
\matrix(a)[matrix of math nodes,
row sep=2em, column sep=2em,
text height=1.5ex, text depth=0.25ex]
{1& &\\
1& &d_1\\
1& &d_2\\
1& &\\};
\path[->](a-1-1) edge  (a-2-3);
\path[->](a-2-1) edge  (a-2-3);
\path[->](a-3-1) edge  (a-2-3);
\path[->](a-4-1) edge  (a-2-3);
\path[->](a-1-1) edge  (a-3-3);
\path[->](a-2-1) edge  (a-3-3);
\path[->](a-3-1) edge  (a-3-3);
\path[->](a-4-1) edge  (a-3-3);
\end{tikzpicture}
\end{center}
with $d_1+d_2$ vertices of dimension $1$
on the left.
This quiver is acyclic and so Theorem \ref{main_thm_precise} applies. 

\subsection{$\F_1(0,4)$}
We consider $Y=\F_1$.  We denote
$C_{-1}$, $C_1$, $f_0$, $f_\infty$
the toric divisors of $\F_1$ so that $C_{-1}^2=-1$,
$C_1^2=1$, $f_0^2=f_\infty^2=0$.
We have the linear equivalence relations $f_0 \sim f_\infty$ and 
$C_1 \sim C_{-1}+f_0$.

We take
$D_1=f_0$ and $D_2$
a smooth rational curve linearly equivalent to $C_{-1}+f_\infty+C_1 \sim
C_{-1}+2f_0$, and intersecting $D_1$ transversally. We have $D_1^2=0$ and $D_2^2=4$.


We first explain how to construct a toric model for $(Y,D)$, where $D=D_1 \cup D_2$. We remark that $C_{-1}$ is a 
$(-1)$-curve disjoint from $D_2$ and so can be contracted. The resulting log Calabi-Yau surface is 
$\PP^2$, with $D_1$ being a line and $D_2$ being a conic.
So it is enough to take the known toric model for 
$(\PP^2, D_1 \cup D_2)$, which is given by $\F_2$ as described in Section
\ref{ex_p2}.

The fan of $\F_2$ is given by four rays generated by 
$(-1,0)$, $(0,-1)$, $(0,1)$ and $(1,2)$: 
\begin{center}
\setlength{\unitlength}{1cm}
\begin{picture}(10,5)
\thicklines
\put(3.8,0.5){$D_2(2)$}
\put(3.5,4.2){$F_1(-2)$}
\put(1.5,2.5){$F_2(0)$}
\put(6,3.5){$D_1(0)$}
\put(5,2.5){\line(-1, 0){2}}
\put(5,2.5){\line(0,-1){2}}
\put(5,2.5){\line(0,1){2}}
\put(5,2.5){\line(1,2){1}}
\end{picture}
\end{center}
We wrote near each ray the corresponding divisor and in parenthesis its self intersection number.

Let $\beta=d_1 [C_{-1}]+d_2 [f_0]
\in H_2(\F_1,\Z)$. We have $\beta \cdot D=d_1+2d_2$,
$\beta \cdot D_1=d_1$, $\beta \cdot D_2=2d_2$.
Following the class $\beta$ through 
the previous blow-ups and blow-down, we get the class $\beta_{\F_2} \in H_2(\F_2,\Z)$
such that 
$\beta_{\F_2}\cdot D_1=d_1+\beta \cdot C_{-1}
=d_1+(d_2-d_1)=d_2$, 
$\beta_{\F_2}\cdot D_2=2d_2$,
$\beta_{\F_2}\cdot F_2=d_2$,
$\beta_{\F_2}\cdot F_1=0$.
Remark that the balancing condition
\[d_2(-1,0)+2d_2(0,-1)+d_2(1,2)=0\]
is indeed satisfied.

The surface $\tilde{Z}$ is obtained from 
$\F_2$ by blowing-up $2d_2$ points on the divisor dual to the ray of direction
$(0,-1)$, one point on the divisor dual to the ray of direction
$(-1,0)$, and one point on the divisor dual to the ray of direction $(1,2)$.
As 
\[\langle (-1,0),(0,-1)\rangle=1 \,,
\langle (0,-1),(1,2)\rangle=1\,,
\langle (1,2),(-1,0)\rangle =2 \,,\]
it follows from Section \ref{quiver_construction} that the corresponding quiver and dimension vectors are given by
\begin{center}
\begin{tikzpicture}[>=angle 90]
\matrix(a)[matrix of math nodes,
row sep=2em, column sep=2em,
text height=1.5ex, text depth=0.25ex]
{1& &\\
1& &d_2\\
1& &(d_2-d_1)\\
1& &\\};
\path[<-](a-1-1) edge  (a-2-3);
\path[<-](a-2-1) edge  (a-2-3);
\path[<-](a-3-1) edge  (a-2-3);
\path[<-](a-4-1) edge  (a-2-3);
\path[->](a-1-1) edge  (a-3-3);
\path[->](a-2-1) edge  (a-3-3);
\path[->](a-3-1) edge  (a-3-3);
\path[->](a-4-1) edge  (a-3-3);
\path[<-][bend left=8](a-2-3) edge (a-3-3);
\path[<-][bend right=8](a-2-3) edge (a-3-3);
\end{tikzpicture}
\end{center}
with $2d_2$ vertices of dimension $1$ on the left.
This quiver contains oriented cycles and so Theorem \ref{main_thm_precise} does not apply.

\subsection{$\F_1(4,0)$}
We consider $Y=\F_1$.  We denote
$C_{-1}$, $C_1$, $f_0$, $f_\infty$
the toric divisors of $\F_1$ so that $C_{-1}^2=-1$,
$C_1^2=1$, $f_0^2=f_\infty^2=0$.
We have the linear equivalence relations $f_0 \sim f_\infty$ and 
$C_1 \sim C_{-1}+f_0$.

We take
$D_2=f_0$ and $D_1$
a smooth rational curve linearly equivalent to $C_{-1}+f_\infty+C_1 \sim
C_{-1}+2f_0$, and intersecting $D_2$ transversally. We have $D_2^2=0$ and $D_1^2=4$.

It follows from the above discussion of 
$\F_1(0,4)$ that $\tilde{Z}$
is obtained from 
$\F_2$ by blowing-up $d_1+1$ points on the divisor dual to the ray of direction 
$(1,2)$, one point on the divisor dual to the ray of direction $(-1,0)$. 

As 
\[\langle (1,2),(-1,0)\rangle =2 \,,\]
it follows from Section \ref{quiver_construction} that the corresponding quiver and dimension vectors are given by
\begin{center}
\begin{tikzpicture}[>=angle 90]
\matrix(a)[matrix of math nodes,
row sep=2em, column sep=2em,
text height=1.5ex, text depth=0.25ex]
{1& &\\
1& &d_2\\
1& &\\
(d_2-d_1)& & \\};
\path[->][bend left=5](a-1-1) edge (a-2-3);
\path[->][bend right=5](a-1-1) edge (a-2-3);
\path[->][bend left=5](a-2-1) edge (a-2-3);
\path[->][bend right=5](a-2-1) edge (a-2-3);
\path[->][bend left=5](a-3-1) edge (a-2-3);
\path[->][bend right=5](a-3-1) edge (a-2-3);
\path[->][bend left=5](a-4-1) edge (a-2-3);
\path[->][bend right=5](a-4-1) edge (a-2-3);
\end{tikzpicture}
\end{center}
with $d_1$ vertices of dimension $1$ on the left.
This quiver is acyclic and so Theorem
\ref{main_thm_precise} applies.
Remark that for 
$d_1=d_2$, this quiver reduces to the quiver describing the 
$\PP^2(4,1)$ example, that is, 
\ $\PP^2$ relatively to a conic.
Indeed, in this case, $\beta \cdot C_{-1}=0$, and the geometry can be directly 
reduced from $\F_1$ to $\PP^2$.

\subsection{$\F_1(1,3)$}
We consider $Y=\F_1$.  We denote
$C_{-1}$, $C_1$, $f_0$, $f_\infty$
the toric divisors of $\F_1$ so that $C_{-1}^2=-1$,
$C_1^2=1$, $f_0^2=f_\infty^2=0$.
We have the linear equivalence relations $f_0 \sim f_\infty$ and 
$C_1 \sim C_{-1}+f_0$.

We take $D_1=C_1$ and $D_2$ 
a smooth rational curve linearly equivalent to $C_{-1}+f_0+f_\infty
\sim C_{-1}+2f_0$. We have $D_1^2=1$
and $D_2^2=3$.


We first explain how to construct a toric model for $(Y,D)$, where $D=D_1 \cup D_2$. We blow-up the two intersection 
points of $D_1$ and $D_2$. 
Let $F_1$ and $F_2$ be the two exceptional divisors.
The strict transforms of the $\PP^1$
fibers (of degree $[f_0]=[f_\infty]$) passing through these points are disjoint 
$(-1)$-curves, which can be contracted. The resulting log Calabi-Yau surface is $\F_1$
with its toric boundary.

The fan of $\F_1$ is given by the four rays generated by 
$(-1,0)$, $(0,-1)$, $(0,1)$ and $(1,1)$: 
\begin{center}
\setlength{\unitlength}{1cm}
\begin{picture}(10,3)
\thicklines
\put(5,1.5){\line(-1, 0){1}}
\put(5,1.5){\line(0,-1){1}}
\put(5,1.5){\line(0,1){1}}
\put(5,1.5){\line(1,1){1}}
\put(3,1.5){$F_1(0)$}
\put(6,2){$F_2(0)$}
\put(4,2.7){$D_1(-1)$}
\put(5,0.2){$D_2(1)$}
\end{picture}
\end{center}
We wrote near each ray the corresponding divisor and in parenthesis its self intersection number.

Let $\beta=d_1 [C_{-1}]+d_2 [f_0] \in H_2(\F_1,\Z)$. We have 
$\beta \cdot D=d_1+2d_2$, $\beta \cdot D_1=d_2$, $\beta \cdot D_2=d_1+d_2$.
Following the class $\beta$ through the previous blow-ups and blow-downs, we get the class $\beta_{\F_1} \in H_2(\F_1,\Z)$
such that $\beta_{\F_1}\cdot D_1=d_2$,
$\beta_{\F_1}\cdot D_2=d_1+d_2$, 
$\beta_{\F_1}\cdot F_1=\beta_{\F_1}\cdot F_2
=\beta\cdot f_0=d_1$.
Remark that the balancing condition
\[d_2(0,1)+(d_1+d_2)(0,-1)+d_1(-1,0)+d_1(1,1)=0\]
is indeed satisfied.

The surface $\tilde{Z}$ is obtained from 
$\F_1$ by blowing-up $d_1+d_2$ points on the divisor dual to the ray of direction 
$(0,-1)$, one point on the divisor dual to the ray of direction $(-1,0)$, and one point on the divisor dual to the 
ray of direction $(1,1)$.
As
\[ \langle (-1,0),(0,-1)\rangle =1 \,,
\langle (1,1),(-1,0)\rangle =1 \,,
\langle (0,-1),(1,1) \rangle =1 \,, \] 
it follows from Section \ref{quiver_construction}
that the corresponding quiver and dimension vectors are given by
\begin{center}
\begin{tikzpicture}[>=angle 90]
\matrix(a)[matrix of math nodes,
row sep=2em, column sep=2em,
text height=1.5ex, text depth=0.25ex]
{1& &\\
1& &d_1\\
1& &d_1\\
1& &\\};
\path[<-](a-1-1) edge  (a-2-3);
\path[<-](a-2-1) edge  (a-2-3);
\path[<-](a-3-1) edge  (a-2-3);
\path[<-](a-4-1) edge  (a-2-3);
\path[->](a-1-1) edge  (a-3-3);
\path[->](a-2-1) edge  (a-3-3);
\path[->](a-3-1) edge  (a-3-3);
\path[->](a-4-1) edge  (a-3-3);
\path[<-](a-2-3) edge (a-3-3);
\end{tikzpicture}
\end{center}
with $d_1+d_2$ vertices of dimension $1$ on the left. This quiver 
contains oriented cycles
and so Theorem \ref{main_thm_precise} does not apply. 

\subsection{$\F_1(3,1)$}
We consider $Y=\F_1$.  We denote
$C_{-1}$, $C_1$, $f_0$, $f_\infty$
the toric divisors of $\F_1$ so that $C_{-1}^2=-1$,
$C_1^2=1$, $f_0^2=f_\infty^2=0$.
We have the linear equivalence relations $f_0 \sim f_\infty$ and 
$C_1 \sim C_{-1}+f_0$.

We take $D_2=C_1$ and $D_1$ 
a smooth rational curve linearly equivalent to $C_{-1}+f_0+f_\infty
\sim C_{-1}+2f_0$. We have $D_2^2=1$
and $D_1^2=3$.

It follows from the above discussion of 
$\F_1(1,3)$ that 
$\tilde{Z}$ is obtained from 
$\F_2$ by blowing-up $d_2$ points on 
the divisor dual to the ray of direction $(0,1)$, one point on the divisor dual to the ray of direction $(-1,0)$ and one point on the divisor dual to the ray of direction $(1,1)$.
As
\[ \langle (0,1),(-1,0)\rangle =1 \,,
\langle (1,1),(-1,0)\rangle =1 \,,
\langle (1,1),(0,1) \rangle =1 \,, \] 
it follows from Section \ref{quiver_construction}
that the corresponding quiver and dimension vectors are given by
\begin{center}
\begin{tikzpicture}[>=angle 90]
\matrix(a)[matrix of math nodes,
row sep=2em, column sep=2em,
text height=1.5ex, text depth=0.25ex]
{1& &\\
1& &d_1\\
1& &d_1\\
1& &\\};
\path[->](a-1-1) edge  (a-2-3);
\path[->](a-2-1) edge  (a-2-3);
\path[->](a-3-1) edge  (a-2-3);
\path[->](a-4-1) edge  (a-2-3);
\path[<-](a-1-1) edge  (a-3-3);
\path[<-](a-2-1) edge  (a-3-3);
\path[<-](a-3-1) edge  (a-3-3);
\path[<-](a-4-1) edge  (a-3-3);
\path[<-](a-2-3) edge (a-3-3);
\end{tikzpicture}
\end{center}
with $d_2$ vertices of dimension $1$
on the left. This quiver is acyclic and so 
Theorem \ref{main_thm_precise} applies.

\subsection{$\F_2(2,2)$} 
We consider $Y=\F_2$.  We denote
$C_{-2}$, $C_2$, $f_0$, $f_\infty$
the toric divisors of $\F_2$ so that $C_{-2}^2=-2$,
$C_2^2=2$, $f_0^2=f_\infty^2=0$.
We have the linear equivalence relations $f_0 \sim f_\infty$ and 
$C_2 \sim C_{-2}+2f_0$.

We take $D_1=C_2$ and $D_2$ 
a smooth rational curve linearly equivalent to $C_{-2}+f_0+f_\infty
\sim C_2$, intersecting $C_2$ transversally. We have $D_1^2=
D_2^2=2$.


We first explain how to construct a toric model for $(Y,D)$, where $D=D_1 \cup D_2$. We blow-up the two intersection 
points of $D_1$ and $D_2$. 
Let $F_1$ and $F_2$ be the two exceptional divisors.
The strict transforms of the $\PP^1$
fibers (of degree $[f_0]=[f_\infty]$) passing through these points are disjoint 
$(-1)$-curves, which can be contracted. The resulting log Calabi-Yau surface is $\F_0=\PP_1 \times \PP^1$
with its toric boundary.

The fan of $\F_0$ is given by 
the four rays generated by 
$(-1,0)$, $(0,-1)$, $(0,1)$ and $(1,0)$:
\begin{center}
\setlength{\unitlength}{1cm}
\begin{picture}(10,3)
\thicklines
\put(5,1.5){\line(-1, 0){1}}
\put(5,1.5){\line(0,-1){1}}
\put(5,1.5){\line(0,1){1}}
\put(5,1.5){\line(1,0){1}}
\put(3,1.5){$F_1(0)$}
\put(6,1.5){$F_2(0)$}
\put(4,2.7){$D_1(0)$}
\put(5,0.2){$D_2(0)$}
\end{picture}
\end{center}
We wrote near each ray the corresponding divisor and in parenthesis its self intersection number.

Let $\beta=d_1 [C_{-2}]+d_2 [f_0]
\in H_2(\F_2,\Z)$. We have 
$\beta \cdot D=2d_2$ and 
$\beta \cdot D_1=\beta \cdot D_2=d_2$.
Following the class 
$\beta$ through the previous blow-ups and blow-downs, we get the class 
$\beta_{\F_0} \in H_2(\F_0,\Z)$ such that 
$\beta_{\F_0}\cdot D_1=\beta_{\F_0}\cdot D_2=d_2$
and $\beta_{\F_0}\cdot F_1=\beta_{\F_0}\cdot F_2=
\beta \cdot f_0=d_1$.
Remark that the balancing condition
\[d_2(0,1)+d_2(0,-1)+d_1(1,0)+d_1(-1,0)=0\]
is indeed satisfied.

The surface
$\tilde{Z}$ is obtained from 
$\F_0$ by blowing-up $d_2$ points on 
the divisor dual to the ray of direction $(0,-1)$, one point on the divisor dual to the ray of direction $(-1,0)$ and one point on the divisor dual to the ray of direction $(1,0)$.
As
\[ \langle (-1,0),(0,-1)\rangle =1 \,,
\langle (0,-1),(1,0) \rangle =1 \,, \] 
it follows from Section \ref{quiver_construction}
that the corresponding quiver and dimension vectors are given by
\begin{center}
\begin{tikzpicture}[>=angle 90]
\matrix(a)[matrix of math nodes,
row sep=2em, column sep=2em,
text height=1.5ex, text depth=0.25ex]
{1& &\\
1& &d_1\\
1& &d_1\\
1& &\\};
\path[<-](a-1-1) edge  (a-2-3);
\path[<-](a-2-1) edge  (a-2-3);
\path[<-](a-3-1) edge  (a-2-3);
\path[<-](a-4-1) edge  (a-2-3);
\path[->](a-1-1) edge  (a-3-3);
\path[->](a-2-1) edge  (a-3-3);
\path[->](a-3-1) edge  (a-3-3);
\path[->](a-4-1) edge  (a-3-3);
\end{tikzpicture}
\end{center}
with $d_2$ vertices of dimension $1$ on the left. This quiver is acyclic and so Theorem 
\ref{main_thm_precise} applies.

\subsection{$\F_N(-N,N+4)$} 
We consider $Y=\F_N$.  We denote
$C_{-N}$, $C_N$, $f_0$, $f_\infty$
the toric divisors of $\F_N$ so that $C_{-N}^2=-N$,
$C_N^2=N$, $f_0^2=f_\infty^2=0$.
We have the linear equivalence relations $f_0 \sim f_\infty$ and 
$C_N \sim C_{-N}+Nf_0$.

We take $D_1=C_{-N}$ and $D_2$ 
a smooth rational curve linearly equivalent to $C_N+f_0+f_\infty
\sim C_N+2f_0$, intersecting $C_{-N}$ transversally. We have $D_1^2=-N$ and
$D_2^2=N+4$.


We first explain how to construct a toric model for $(Y,D)$, where $D=D_1 \cup D_2$.
We blow-up the two intersection points of 
$D_1$ and $D_2$. Let 
$F_1$ and $F_2$ be the two exceptional divisors.
The strict transforms of the $\PP^1$ fibers 
(of degree $[f_0]=[f_\infty]$) passing through these points are disjoint interior 
$(-1)$-curves, which can be contracted. The resulting log Calabi-Yau surface is 
$\F_{N+2}$ with its toric boundary.

The fan of $\F_{N+2}$ is given by the four rays generated by 
$(-1,0)$, $(0,-1)$, $(0,1)$ and $(1,N+2)$. 

Let $\beta=d_1 [C_{-N}]+d_2 [f_0]
\in H_2(\F_N,\Z)$. We have 
$\beta \cdot D=-Nd_1+2(d_1+d_2)$,
$\beta \cdot D_1=-Nd_1+d_2$ and $\beta \cdot D_2=2d_1+d_2$.
Following the class $\beta$ through the previous blow-ups and blow-downs, 
we get the class $\beta_{\F_{N+2}}$ such that 
$\beta_{\F_{N+2}}\cdot D_1=-Nd_1+d_2$,
$\beta_{\F_{N+2}}\cdot D_2=2d_1+d_2$,
$\beta_{\F_{N+2}}\cdot F_1=\beta_{\F_{N+2}}\cdot F_2
=\beta\cdot f_0=d_1$.
Remark that the balancing condition
\[(-Nd_1+d_2)(0,1)+(2d_1+d_2)(0,-1)+d_1(-1,0)+d_1(1,N+2)=0\]
is indeed satisfied.

The surface $\tilde{Z}$ is obtained from
$\F_{N+2}$ by blowing-up $2d_1+d_2$ points on the divisor dual to the ray of direction 
$(0,-1)$, one point on the divisor dual to the ray of direction $(-1,0)$ and one point on the divisor dual to the ray of direction 
$(1,N+2)$.
As 
\[\langle (1,N+2),(-1,0)\rangle =N+2
\,,
\langle (-1,0),(0,-1) \rangle =1 \,,
\langle (0,-1),(1,N+2) \rangle =1 \,,\]
it follows from Section
\ref{quiver_construction} that the corresponding quiver and dimension vectors are given by
\begin{center}
\begin{tikzpicture}[>=angle 90]
\matrix(a)[matrix of math nodes,
row sep=2em, column sep=2em,
text height=1.5ex, text depth=0.25ex]
{1& &\\
1& &d_1\\
1& &d_1\\
1& &\\};
\path[<-](a-1-1) edge  (a-2-3);
\path[<-](a-2-1) edge  (a-2-3);
\path[<-](a-3-1) edge  (a-2-3);
\path[<-](a-4-1) edge  (a-2-3);
\path[->](a-1-1) edge  (a-3-3);
\path[->](a-2-1) edge  (a-3-3);
\path[->](a-3-1) edge  (a-3-3);
\path[->](a-4-1) edge  (a-3-3);
\path[->](a-3-3) edge  (a-2-3);
\path[->][bend left=5](a-3-3) edge  (a-2-3);
\path[->][bend right=5](a-3-3) edge  (a-2-3);
\end{tikzpicture}
\end{center}
with $2d_1+d_2$ vertices of dimension $1$ on the left and 
$N+2$ vertical arrows between the two vertices on the right.
Remark that this quiver contains oriented cycles and so Theorem 
\ref{main_thm_precise} does not apply.

\subsection{$\F_N(N+4,-N)$}
We consider $Y=\F_N$.  We denote
$C_{-N}$,$C_N$, $f_0$, $f_\infty$
the toric divisors of $\F_N$ so that $C_{-N}^2=-N$,
$C_N^2=N$, $f_0^2=f_\infty^2=0$.
We have the linear equivalence relations $f_0 \sim f_\infty$ and 
$C_N \sim C_{-N}+Nf_0$.

We take $D_2=C_{-N}$ and $D_1$ 
a smooth rational curve linearly equivalent to $C_N+f_0+f_\infty
\sim C_N+2f_0$, intersecting $C_{-N}$ transversally. We have $D_2^2=-N$ and
$D_1^2=N+4$.

It follows from the above discussion of 
$\F_N(-N,N+4)$ that the surface 
$\tilde{Z}$ is obtained from
$\F_{N+2}$ by blowing-up
$-Nd_1+d_2$ points on the divisor dual to the ray of direction $(0,1)$, 
one point on the divisor dual to the ray of direction $(-1,0)$ and one point on  the divisor dual to the ray of direction 
$(1,N+2)$.
As 
\[\langle (1,N+2),(-1,0)\rangle =N+2
\,,
\langle (0,1),(-1,0) \rangle =1 \,,
\langle (1,N+2),(0,1) \rangle =1 \,,\]
it follows from Section
\ref{quiver_construction} that the corresponding quiver and dimension vectors are given by
\begin{center}
\begin{tikzpicture}[>=angle 90]
\matrix(a)[matrix of math nodes,
row sep=2em, column sep=2em,
text height=1.5ex, text depth=0.25ex]
{1& &\\
1& &d_1\\
1& &d_1\\
1& &\\};
\path[->](a-1-1) edge  (a-2-3);
\path[->](a-2-1) edge  (a-2-3);
\path[->](a-3-1) edge  (a-2-3);
\path[->](a-4-1) edge  (a-2-3);
\path[<-](a-1-1) edge  (a-3-3);
\path[<-](a-2-1) edge  (a-3-3);
\path[<-](a-3-1) edge  (a-3-3);
\path[<-](a-4-1) edge  (a-3-3);
\path[<-](a-2-3) edge  (a-3-3);
\path[<-][bend left=5](a-2-3) edge  (a-3-3);
\path[<-][bend right=5](a-2-3) edge  (a-3-3);
\end{tikzpicture}
\end{center}
with $-Nd_1+d_2$ vertices of dimension $1$ on the left and  $N+2$
vertical arrows between the two vertices on the right. This quiver is acyclic and so Theorem \ref{main_thm_precise} applies.

\newcommand{\etalchar}[1]{$^{#1}$}

\vspace{+8 pt}
\noindent
Institute for Theoretical Studies \\
ETH Zurich \\
8092 Zurich, Switzerland \\
pboussea@ethz.ch

\end{document}